\newtheorem{thm}{Theorem}[section]
\newtheorem*{thm*}{Theorem}
\newtheorem{prop}[thm]{Proposition}
\newtheorem*{prop*}{Proposition}
\newtheorem{cor}[thm]{Corollary}
\newtheorem*{thm:reversal}{Theorem \ref{reversalidentify}}
\newcommand{\RR}{\mathbb{R}}
\newcommand{\NN}{\mathbb{N}}
\newcommand{\ZZ}{\mathbb{Z}}
\newcommand{\PP}{\mathbb{P}}
\newcommand{\EE}{\mathbb{E}}
\newcommand{\QQ}{\mathbb{Q}}
\newcommand{\RRR}{\mathcal{R}}
\newcommand{\LLL}{\mathcal{L}}
\newcommand{\DDD}{\mathcal{D}}
\newcommand{\ZZZ}{\mathcal{Z}}
\newcommand{\FFF}{\mathcal{F}}
\newcommand{\Var}{\mathrm{Var}}
\providecommand{\keyword}[1]{\textbf{\textit{Keywords---}} #1}
\DeclarePairedDelimiter{\ceil}{\lceil}{\rceil}
\DeclarePairedDelimiter{\floor}{\lfloor}{\rfloor}
\DeclarePairedDelimiter{\norm}{\lVert}{\rVert}
\DeclarePairedDelimiter{\abs}{\lvert}{\rvert}
\title{Time-Reversal of Coalescing Diffusive Flows and Weak Convergence of Localized Disturbance Flows}
\author{James Bell}
\affil{Cambridge Centre for Analysis and The Alan Turing Institute, jbell@turing.ac.uk}
\begin{document}

\maketitle
\begin{abstract}
	We generalize the coalescing Brownian flow, also known as the Brownian web, considered as a weak flow to allow varying drift and diffusivity in the constituent diffusion processes and call these flows coalescing diffusive flows. We then identify the time-reversal of each coalescing diffusive flow and provide two distinct proofs of this identification. One of which is direct and the other proceeds by generalizing the concept of a localized disturbance flow to allow varying size and shape of disturbances, we show these new flows converge weakly under appropriate conditions to a coalescing diffusive flow and identify their time-reversals.
\end{abstract}

\keyword{Stochastic Flow, Distrubance Flow, Arratia Flow, Dual Flow, Time-Reversed Flow, Coalescing Flow}

\vspace{10pt}
\section{Introduction}
\vspace{10pt}

This paper is a contribution to the theory of stochastic flows in one dimension, specifically the study of inhomogeneous flows and their time-reversals. 

We provide two proofs of our main result which is Theorem \ref{reversalidentify} which says that the time-reversal of a coalescing diffusive flow with drift $b$ and diffusivity $a$ is (provided the spatial derivative $a'$ of $a$ is Lipschitz) given by a coalescing diffusive flow of drift $-b+\frac{a'}{2}$ and diffusivity $a$. Theorem \ref{flowconv} which establishes convergence of certain families of inhomogeneous disturbance flows to coalescing diffusive flows may also be of independent interest.

Coalescing Brownian motions were introduced by Arratia in 1979 \cite{A}. The object of study there consisted of a collection of coalescing Brownian motions starting from every point on the real line at the same time. T\'oth and Werner \cite{TW98} extended this to allow a Brownian motion to start from every point on the line at every time $t\in \RR$. Formally, this object is a family of random measurable functions $(\phi_{ts}:s\leq t \in \RR)$ satisfying the flow property
\begin{equation}
\phi_{ts}\circ \phi_{sr}=\phi_{tr}, \hspace{10mm} r\leq s \leq t
\end{equation}
and such that every finite collection of trajectories $(\phi_{ts}(x):t\geq s)$ performs coalescing Brownian motion. This is the approach taken in Arratia \cite{A}, T\'oth and Werner \cite{TW98}, Le Jan and Raimond \cite{LjR} and Tsirelson \cite{T}. A problem, however, with this approach is that the $\phi_{ts}$ cannot be chosen to be right-continuous, as the composition of two right-continuous functions is not necessarily right-continuous. 

An alternative approach that avoids this problem is given by Fontes et al. \cite{FINR} based on completing the set of trajectories to form a compact set of continuous paths, this completion can be done in multiple ways leading to multiple objects known as Brownian webs. Another way around the problem was introduced by Norris and Turner in \cite{NT}, based on the idea of considering pairs $\{\phi^- , \phi^+ \}$ of left and right continuous modifications of the Arratia flow. This setup does not store the information of the value of $\phi_{ts}$ at a jump, and as a result the flow property must be relaxed to a weak flow property (definition in Section \ref{CoalescingDiffusiveFlows}). The space of weak flows with the metric appearing in \cite{NT} provides a useful space for studying weak convergence, as it contains flows without continuous trajectories such as disturbance flows. This is the approach that this work builds on. Whilst Norris and Turner only deal with the case on the compact circle, this was extended to a Brownian web on $\mathbb{R}$ in the PhD thesis of Ellis \cite{E11}. A later paper by Berestycki et al. \cite{BGS13} provides another state-space and topology for the Brownian web, which was based on the quad crossings of Schramm and Smirnov and another topology is given in Greven et al \cite{GSW} based on marked metric measure spaces \cite{DGP}. A good overview of this work is given in Schertzer et al \cite{SSS15}.

Recently Riabov \cite{R18} has shown that coalescing stochastic flows can be realised as random dynamical systems. This approach avoids the need for relaxing to a weak flow property and constructs the time reversed (dual) flow explicitly as a part of the dynamical system.

The coalescing diffusive flow, $\phi$, consists of diffusion processes starting from each point in space-time, each with drift and diffusivity given by functions, $b$ and $a$ respectively, of space and time. They evolve independently until they collide, at which point they coalesce. We denote the distribution of this coalescing disturbance flow by $\mu_A$. The time reversal $\hat{\phi}$ of a flow $\phi$ is given by the inverse maps according to the following expression, where technicalities are being suppressed for brevity,
\begin{equation}
\hat\phi_I = \phi^{-1}_{-I}.
\end{equation}

Our main result in this paper is the following theorem, identifying the distribution of the time reversal of a coalescing stochastic disturbance flow. The distribution $\nu_A$ is of a coalescing stochastic flow with drift $b^\nu$ and diffusivity $a^\nu$.
\begin{thm:reversal}
  If $a$ has spatial derivative $a'$, and $a$, $b$ and $a'$, are uniformly bounded on compacts in time and $L$-Lipschitz in space then
  \begin{equation}
    \hat\mu_A=\nu_A:=\mu^{a^\nu,b^\nu}_A
  \end{equation}
  where	$ a^\nu(t,x)=a(-t,x)$, $b^\nu(t,x)=-b(-t,x)+a'(-t,x)/2$ and $\hat{\mu}_A$ is
  the time reversal of $\mu_A$.
\end{thm:reversal}

To intuitively  understand the presence of the $a'(-t,x)/2$ term consider the case $b\equiv 0$. If $a'(-t,x)$ and $\delta x$ are positive then the diffusivity at $x+\delta x$ is greater than at $x-\delta x$. This results in the flow from $x+\delta x$ over a small time increment from $t$ being more likely to drop below $x$ than the flow from $x-\delta x$ rising above $x$. This is equivalent to saying the time reversed flow is more likely to rise by $\delta x$ than fall by $\delta x$ over a short period of time. This manifests itself as a positive drift. Our direct proof will proceed partially along the lines of making this explanation exact.

A similar correction to the drift term can be found in work on smooth Brownian flows. For example a treatment in terms of infinitesimal generators can be found in \cite{K97}. This differs from our situation as the maps $\phi_I$ are required to be homeomorphisms and thus the paths can't be independent up to coalescence.

Since the original submission of this paper, Riabov has published a new paper \cite{R19}. There Riabov proves a version of the above result, for fixed diffusivity and with drift dependent on space but not time, within his random dynamical system framework. The paper provides a good framework for the consideration of stochastic flows. His existence proof in the previous paper \cite{R18} could straightforwardly be extended to this case. The calculations of the second method of proof in this paper could then be translated into that framework to show that the above theorem holds within that framework as well.

Konarovskyi \cite{K17} also studies a generalization of coalescing Brownian motions with varying diffusivity. In that work, the diffusions only start from time $t=0$ and the diffusivity of each is taken to be inversely proportional to the fraction of the diffusions that have coalesced to form it, rather than as a function of position and time, as in this work.

A disturbance flow, introduced in \cite{NT}, is a composition of independent random maps of the circle to itself. Unlike \cite{NT}, we do not require that our maps are identically distributed or that their distributions are invariant under conjugation by a rotation of the circle. For a pair of suitably smooth $a,b$, we consider limits where the maps $F$ are close to the identity, well localized and have mean of $F(x)-x$ close to $hb(x)$ and variance of $F(x)-x$ close to $ha(x)$ as $h\rightarrow 0$. We prove convergence of individual paths to diffusion processes and of the flow as a whole to the coalescing diffusive flow with diffusivity and drift given by $a$ and $b$. We also describe the time-reversal of the disturbance flows and use this to describe the time-reversal of a coalescing diffusive flow.

This paper is structured as follows. Section \ref{CountableDiffusions} proves existence and uniqueness of a simplified version of the coalescing diffusive flows, which consists of only countably many paths. Section \ref{CoalescingDiffusiveFlows} defines the metric spaces that our flows take values in, and proves existence and uniqueness of the coalescing diffusive flows (Theorem \ref{webexists}). Section \ref{TimeReversal} defines the time-reversal of a flow and provides the statement of our main result (Theorem \ref{reversalidentify}), which identifies the time-reversal of a coalescing diffusive flow. At this point the reader has the option of skipping straight to Section \ref{DirectProof} which will not require Sections \ref{DisturbanceFlows} or \ref{DisturbanceProof}. Section \ref{DisturbanceFlows} defines the notion of a disturbance flow, and shows convergence of paths from the flow to diffusions and of countable collections of paths to the simplified flow from Section \ref{CountableDiffusions}. Section \ref{DisturbanceProof} shows convergence of the disturbance flows to coalescing diffusive flows, identifies their time-reversals and uses this to provide a proof of Theorem \ref{reversalidentify}. Section \ref{DirectProof} provides an alternative proof of Theorem \ref{reversalidentify} that does not require the use of disturbance flows. It also contains as an intermediate weaker version (requiring more smoothness of $a$ and $b$) Theorem \ref{reversalLipschitz}.

The disturbance flow based proof of our main result generalizes the main proof in \cite{NT}; much of the notation is taken from there and some of the proofs are very similar. However, there are multiple places where new ideas are required to handle the generalization. While \cite{NT} allows the distribution of disturbances to be random only in that the location of the disturbance is chosen uniformly at random from around the circle, we allow the disturbances to vary in size and shape both randomly and with location in space and time, the shape and size is also allowed to vary a lot more as we take the limit to small disturbances than is allowed in \cite{NT}. The new ideas in the proofs are first evident in the proof of Theorem \ref{singleconv}, showing that individual trajectories of suitable disturbance flows converge weakly, where the proof of tightness requires bounds that hold despite the possibly varying drift and diffusivity. The time reversal results in Section \ref{DisturbanceProof} are generalizations of those in \cite{NT}. However, the statement of our main result Theorem \ref{reversalidentify} is not something that you would obviously expect, and the proof had to be modified substantially to deal with the more general disturbance flows.

The proof in Section \ref{DirectProof} is original in idea as well as in detail. While it is about the same length as the disturbance flow based proof, the weaker version of our main result Theorem \ref{reversalLipschitz} (which is identical except it assumes that $a$ and $b$ are Lipschitz in time as well as space) is proved with a substantially smaller amount of work (about 5 pages after the statement has been made rigorous rather than eighteen) and might suffice for future applications. In particular, it provides a short proof, without the use of disturbance flows, of the Brownian case which is Corollary 7.2 of \cite{NT}.


\vspace{10pt}
\section{Countable Collections of Coalescing Diffusions}
\label{CountableDiffusions}
\vspace{10pt}

In this section, we recall uniqueness in law for weak solutions of SDEs, then define a metric space, $D_E$, whose elements consist of countable collections of cadlag paths. Finally, using a martingale problem in the style of \cite{SV}, specifically those corresponding to a countable family of coalescing diffusion processes that are independent until collision; we identify certain elements of $D_E$.

Given functions $a:\RR^2 \rightarrow \RR$ and $b:\RR^2 \rightarrow \RR$ measurable, bounded uniformly on compacts in the first variable and $L$-Lipschitz in the second, with $a$ positive and bounded away from zero. Let $\sigma(t,x)=\sqrt{a(t,x)}$. Then the SDE
\begin{equation}
\label{SDE}
dX_t=b(t,X_t)dt+\sigma(t,X_t)dW_t
\end{equation}
has uniqueness in law for weak solutions \cite{RW}, i.e. given $e=(s,x)$ and a triple $(X,W)_{t\geq s}$, $(\Omega,\FFF,\PP)$, $(\FFF_t)_{t\geq s}$, such that

a) $(\Omega,\FFF,\PP)$ is a probability space with $(\FFF_t)_{t\geq s}$ as a complete, right-continuous filtration;

b) $X$ is adapted to $(\FFF_t)$, $X$ is continuous and $W$ is an $(\FFF_t)$-Brownian motion;

c) $X_s=x$;

d) almost surely, both $X$ and the quadratic variation of $X$ are bounded on each compact time interval;

e) almost surely
\begin{equation}
\label{SDEintegralform}
X_t=X_s+\int_s^t b(r,X_r)dr+\int_s^t \sigma(r,X_r)dW_r\hspace{5mm} \forall t\geq s,
\end{equation}

then the law of $X$ is determined by $a$,$b$ and $e$. Furthermore such solutions exist.

We will write this law as $\mu_e^{a,b}$, and say that $X$ is a diffusion process with drift $b$ and diffusivity $a$. Throughout we will assume that $a$ and $b$ have period $1$ in the second variable (as well as the properties above), and $X$ will be considered as a diffusion process on the circle $\RR/\ZZ$.

We will in several proofs use the notation
\begin{align}
b^*&:=\sup_{x\in [0,1], r\in I}|b(r,x)|,\\
a^*&:=\sup_{x\in [0,1], r\in I}a(r,x),\\
a_*&:=\inf_{x\in [0,1], r\in I}a(r,x),
\end{align}
where $I$ is an compact interval of time that contains all the times relevant to the given context. It will only be important that in any given context these numbers are finite and $a_*>0$.

Let $D_e=D_x([s,\infty),\RR)$ be the space of cadlag paths starting from $x$ at time $s$. Write $d_e$ for the Skorokhod metric on $D_e$.

Given a sequence $E=(e_k:k\in \NN)$ in $\RR^2$, set
\begin{equation}
D_E=\prod_{k=1}^{\infty}D_{e_k}
\end{equation}
and define a metric $d_E$ on $D_E$ by
\begin{equation}
d_E(z,z')=\sum_{k=1}^\infty 2^{-k} \left(d_{e_k}(z^k,z^{\prime k})\wedge1\right).
\end{equation}
Then $(D_E,d_E)$ is a complete separable metric space.

Write $e_k=(s_k,x_k)$ and denote by $(Z^k_t)_{t\geq s_k}$ the $k$th coordinate process on $D_E$, given by $Z^k_t(z)=z^k_t$. Consider the filtration $(\ZZZ_t)_{t\in \RR}$ on $D_E$, where $\ZZZ_t$ is the $\sigma$-algebra generated by $(Z^k_s : s_k<s\leq t\vee s_k, k \in \NN)$. Write $C_E$ for the (measurable) subset of $D_E$, where each coordinate path is continuous. Define on $C_E$
\begin{equation}
T^{jk}=\inf \{ t\geq s_j \vee s_k : Z^j_t - Z^k_t \in \ZZ \}.
\end{equation}
The $T^{jk}$ are the collision times of the paths considered in $\RR/\ZZ$. The following is a generalization of a reformulation in \cite{NT} of a result of Arratia in \cite{A}.

\begin{prop}
	\label{countableweb}
	Given $a,b$ measurable and bounded uniformly on compacts in time and $L$-Lipschitz in space as in \eqref{SDE}, there exists a unique Borel probability measure $\mu^{a,b}_E$ on $D_E$ under which, for all $j,k\in \NN$, the processes
	\begin{equation}
	\left( Z^k_t-\int_{s_k}^t b(s,Z^k_s) ds \right)_{t\geq s_k}
	\end{equation}
	and
	\begin{equation}
	\left(Z^k_tZ^j_t-\int_{s_j \vee s_k}^t \left(Z^k_sb(s,Z^j_s)+Z^j_sb(s,Z^k_s)\right)ds-\int_{T^{jk} \wedge t}^t a(s,Z^j_s) ds \right)_{t\geq s_j \vee s_k}
	\end{equation}
	are both continuous local martingales.
\end{prop}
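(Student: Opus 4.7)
The plan is to split the argument into uniqueness of law and existence, reducing both to the single-path SDE uniqueness invoked just before the statement. For uniqueness, fix any probability measure $\mu$ on $D_E$ satisfying the two martingale conditions. Applying the first to a single index $k$ shows that $Z^k$ is a continuous semimartingale with bounded-variation part $\int_{s_k}^{\cdot}b(s,Z^k_s)\,ds$. Taking $j=k$ in the second condition and expanding $(Z^k)^2$ by It\^o's formula then pins down the quadratic variation $\langle Z^k\rangle_t=\int_{s_k}^{t}a(s,Z^k_s)\,ds$, so $Z^k$ is a weak solution of \eqref{SDE} from $e_k$ and the cited uniqueness of law identifies its marginal as $\mu^{a,b}_{e_k}$.

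For joint laws I would treat pairs first and then induct. The second condition with $j\neq k$ identifies $\langle Z^j,Z^k\rangle_t=\int_{T^{jk}\wedge t}^{t}a(s,Z^j_s)\,ds$; in particular the covariation vanishes on $[s_j\vee s_k,T^{jk}]$. Combined with the already identified drifts and individual quadratic variations, this makes the pair a weak solution, up to time $T^{jk}$, of the two-dimensional SDE with diagonal diffusion and independent driving Brownian motions, to which a two-dimensional analogue of the SDE uniqueness result applies. After $T^{jk}$ the second condition yields $\langle Z^j-Z^k\rangle\equiv 0$, so $Z^j-Z^k$ is constant and (by continuity plus $Z^j_{T^{jk}}-Z^k_{T^{jk}}\in\ZZ$) integer valued, hence $Z^k$ is determined by $Z^j$ on $[T^{jk},\infty)$ modulo a fixed integer. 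The strong Markov property and an induction over the ordered collision times extend this to any finite collection of coordinates, and projective consistency determines $\mu$ on all of $D_E$. Existence is then obtained by constructing the candidate law directly: let $Z^1$ be a diffusion from $e_1$, and having built $Z^1,\ldots,Z^{k-1}$, enlarge the probability space by an independent Brownian motion and let $Z^k$ solve \eqref{SDE} from $e_k$ until $T^k=\min_{j<k}T^{jk}$, after which set $Z^k_t=Z^{j^*}_t+(Z^k_{T^k}-Z^{j^*}_{T^k})$ for $j^*$ the almost-surely unique minimiser. Periodicity of $a$ and $b$ in space keeps the drift and diffusivity coherent at the join, so the two martingale identities fall out of integration by parts and the independence-before-collision built into the construction.

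The main obstacle is the joint uniqueness step: extracting independence of the pre-collision driving noises from the vanishing covariation, and then gluing the pre- and post-collision descriptions into a single statement about the law of the pair on the whole time axis. This requires knowing that $T^{jk}$ is an $(\ZZZ_t)$-stopping time (which is immediate since it is the hitting time of a closed set in $\RR/\ZZ$ by a continuous process) and a careful strong Markov argument at $T^{jk}$ combined with the uniqueness of law for \eqref{SDE} on the circle to handle the post-coalescence phase. Once this pairwise statement is in hand, the induction to larger finite subfamilies and the passage to the countable product on $D_E$ are routine applications of Kolmogorov's extension theorem and the fact that $d_E$ generates the product topology.
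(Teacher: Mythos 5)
Your proposal is correct in outline but takes a genuinely different route from the paper's. The paper's sketch handles uniqueness by ``un-coalescing'': given any $\mu$ satisfying the martingale conditions, it enlarges the probability space with a supply of independent Brownian motions to reconstruct, from each coalescence time onward, the path that was deleted, thereby producing a countable family of \emph{independent} diffusions. Uniqueness then falls out of a single application of the Stroock--Varadhan martingale-problem characterization for that independent family, with no need to stratify by collision times. Your plan instead analyses the coalescing system head-on: single-coordinate law from the 1D SDE uniqueness, pair law from a 2D uniqueness on $[s_j\vee s_k,T^{jk}]$ plus a gluing at $T^{jk}$, and then an induction over ordered collision times for larger finite subfamilies. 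Both are viable, but the paper's trick is conceptually cleaner: it sidesteps the inductive strong-Markov bookkeeping (which, as you note, is the ``main obstacle'' in your approach, since strong Markov at each stage has to be earned from the well-posedness established at the previous stage, and the combinatorics of which paths have merged with which before a given collision must be tracked). Your route, by contrast, stays entirely within the elementary SDE-uniqueness toolbox invoked just before the proposition and avoids the auxiliary enlargement of the probability space, which some may find more transparent.

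One step needs repair. You write that after $T^{jk}$ the second condition yields $\langle Z^j-Z^k\rangle\equiv 0$, ``so $Z^j-Z^k$ is constant.'' Neither half of this is immediate. First, $\langle Z^j-Z^k\rangle_t-\langle Z^j-Z^k\rangle_{T^{jk}}=\int_{T^{jk}}^t\bigl(a(s,Z^k_s)-a(s,Z^j_s)\bigr)\,ds$, which is not manifestly zero; you need to invoke the condition for the pair $(k,j)$ as well and use symmetry of covariation to force $a(s,Z^j_s)=a(s,Z^k_s)$ a.e.\ on $(T^{jk},\infty)$. Second, even once the quadratic variation of $Z^j-Z^k$ is constant, $Z^j-Z^k$ is only a finite-variation process, not yet a constant: it still carries the drift $\int_{T^{jk}}^t\bigl(b(s,Z^j_s)-b(s,Z^k_s)\bigr)\,ds$. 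To conclude coalescence you must use that $Z^j_{T^{jk}}-Z^k_{T^{jk}}\in\ZZ$, the periodicity of $b$, and the Lipschitz bound to get $|Z^j_t-Z^k_t-n|\le L\int_{T^{jk}}^t|Z^j_s-Z^k_s-n|\,ds$ and apply Gr\"onwall --- exactly the argument the paper runs explicitly in the proof of Proposition \ref{countableconv}. With that repair, and with the strong-Markov induction spelled out, your approach goes through.
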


We give the following proof sketch. For existence, one can take independent diffusion processes, with coefficients $a$ and $b$, from each of the given time-space starting points and then impose a rule of coalescence on collision, deleting the path of larger index. The law of the resulting process has the desired properties. On the other hand, given a probability measure such as described in the proposition, on some larger probability space, one can use a supply of independent Brownian motions to build diffusions continuing each of the paths deleted at each collision. Then, the martingale problem characterization of diffusion processes given in \cite{SV}, can be used to see that one has recovered the set-up used for existence. This gives uniqueness.

\vspace{10pt}
\section[Existence and Uniqueness of Coalescing Diffusive Flows]{Existence and Uniqueness of Coalescing\\ Diffusive Flows}
\label{CoalescingDiffusiveFlows}
\vspace{10pt}

We now introduce the space of continuous weak flows $C^\circ (\RR,\DDD)$ and the space of cadlag weak flows $D^\circ (\RR,\DDD)$, both introduced in \cite{NT}. We will then identify certain elements of $C^\circ (\RR,\DDD)$ as coalescing diffusive flows, again using a martingale problem. The space $C^\circ (\RR,\DDD)$ is sufficient for stating our main result and understanding the proof that doesn't use disturbance flows. However, we will need $D^\circ (\RR,\DDD)$ to deal with the fact that the disturbance flows are not continuous in time. The following explanation of notation follows \cite{NT} very closely, and all the claims made in italics are proved in \cite{NT}.

We consider non-decreasing, right-continuous functions $f^+:\RR \rightarrow \RR$ with the degree 1 property
\begin{equation}
f^+(x+n) = f^+(x)+n,\hspace{10mm} x\in\RR,\hspace{10mm} n\in\ZZ.
\end{equation}
Let us denote the set of such functions by $\RRR$ and the set of analogous left-continuous functions by $\LLL$. Each $f^+ \in \RRR$ has a left-continuous modification given by $f^-(x)=\lim_{y\uparrow x}f^+(y)$. Let $\DDD$ denote the set of corresponding pairs $f=\{f^-,f^+\}$. We will write $f$ in place of $f^{\pm}$ when the choice is irrelevant for the purpose at hand, especially in the case when $f^+=f^-$, i.e. $f^+$ is continuous.

Firstly, we define a metric on $\DDD$. Associate to each function $f$ a function $f^\times$ given by $f^\times (t)=t-x$, where $x\in \RR$ is the unique value such that
\begin{equation}
\frac{x+f^-(x)}{2}\leq t\leq \frac{x+f^+(x)}{2}
\end{equation}
as shown in Figure \ref{CoordChange}.
\emph{We can define a complete locally compact metric $(\DDD,d_\DDD)$ by}
\begin{equation}
d_\DDD(f,g)=\sup_{t\in[0,1)} \abs{f^\times(t)-g^\times(t)}.
\end{equation}
\begin{figure}
	\begin{tikzpicture}[scale=0.51]
	\draw [<->] (11,0) -- (0,0) -- (0,11);
	\draw [<->] (-11,0) -- (0,0) -- (0,-11);
	\node [above] at (0,11) {$f(x)$};
	\node [below] at (11,0) {$x$};
	\draw (-10,-10) -- (-8,-9) -- (-7,-5) -- (-3,-4) -- (-2,0) -- (4,3) -- (7,6) -- (7,8) -- (10,10);
	\draw [dashed] (-10,-10)--(10,10) (-10,10) -- (10,-10);
	\node [above] at (-10,10) {$f^\times (t)$};
	\node [right] at (10,10) {$t$};
	\end{tikzpicture}
	\caption{\label{CoordChange}The graph of $f^\times$ can be formed from the graph of $f$ by rotating the axes by $\frac{\pi}{4}$ and scaling both axes up by $\sqrt{2}$. Note that where a jump in $f$ occurs we must add a straight line between the upper and lower limits in order to give the graph of $f^\times$ there.}
\end{figure}

Consider $\phi = (\phi_I : I\subseteq \RR)$, with $\phi_I \in \DDD$ and $I$ ranging over all non-empty bounded intervals. We say that $\phi$ is a \emph{weak flow} if given $I$ a disjoint union of intervals $I_1$ and $I_2$, with $\sup I_1=\inf I_2$,
\begin{equation}
\phi^-_{I_2}\circ \phi^-_{I_1} \leq \phi^-_I \leq \phi^+_I \leq \phi^+_{I_2} \circ \phi^+_{I_1}.
\end{equation}
$\phi$ is said to be \emph{cadlag} if for all $t\in \RR$,
\begin{equation}
\phi_{(s,t)} \rightarrow \textrm{id} \hspace{5mm} \textrm{as} \hspace{5mm} s \uparrow t, \hspace{10mm} \phi_{(t,u)} \rightarrow \textrm{id} \hspace{5mm} \textrm{as} \hspace{5mm} u \downarrow t.
\end{equation}
Here, the convergence of functions is with respect to the metric of $\DDD$ (also note that this definition is left-right symmetric, we call it cadlag to match previous work).

$D^\circ (\RR,\DDD)$ is the set of cadlag weak flows. We set $\phi_\emptyset = \textrm{id}$.
Given $\{I_n : n\in \NN\}$ and $I$ bounded intervals, write $I_n\rightarrow I$ if
\begin{equation}
I=\bigcup_n \bigcap_{m\geq n} I_m = \bigcap_n \bigcup_{m\geq n} I_m.
\end{equation}
\emph{For every $\phi \in D^\circ(\RR,\DDD)$, we have}
\begin{equation}
\phi_{I_n}\rightarrow \phi_I \textrm{ whenever } I_n\rightarrow I.
\end{equation}

If $\phi \in D^\circ(\RR,\DDD)$ satisfies $\phi_{\{t\}}=\textrm{id}$ for all $t \in \RR$ then we have that $\phi_{(s,t)}=\phi_{(s,t]}=\phi_{[s,t)}=\phi_{[s,t]}$ for all $s<t$. Denoting these all by $\phi_{ts}$ we define $C^\circ (\RR,\DDD)$ to be the set of all such $(\phi_{ts}: s,t \in \RR, s<t)$.
For $\phi,\psi \in C^\circ(\RR,\DDD)$ and $n\geq1$, define
\begin{equation}
d^{(n)}_{C}(\phi,\psi)= \sup_{s,t\in(-n,n),s<t}d_\DDD(\phi_{ts},\psi_{ts})
\end{equation}
and then let
\begin{equation}
d_C(\phi,\psi)= \sum_{n=1}^{\infty} 2^{-n} \left( d^{(n)}_C(\phi,\psi)\wedge 1 \right).
\end{equation}
Under this metric $C^\circ(\RR,\DDD)$ \emph{is complete and separable}.

In the interests of defining a metric on $D^\circ(\RR,\DDD)$, for $\lambda$ an increasing homeomorphism of $\RR$ we define
\begin{equation}
\gamma(\lambda)=\sup_{t \in \RR} \abs{\lambda (t)-t}\vee \sup_{s,t\in\RR ,s<t} \left\lvert \log\left(\frac{\lambda(t)-\lambda(s)}{t-s}\right) \right\rvert,
\end{equation}
and let $\chi_n$ be the cut-off function given by
\begin{equation}
\chi_n(I)=0\vee (n+1-R)\wedge 1, \hspace{5mm} R=\sup I \vee (-\inf I).
\end{equation}
We can now define for $\phi,\psi \in D^\circ(\RR,\DDD)$ and $n\geq1$,
\begin{equation}
d^{(n)}_{D}(\phi,\psi)=\inf_\lambda \left\lbrace  \gamma(\lambda) \vee \sup_{I\subseteq \RR} \norm{\chi_n(I)\phi^\times_I - \chi_n(\lambda(I))\psi^\times_{\lambda(I)} }_\infty \right\rbrace
\end{equation}
where the infimum is taken over the set of increasing homeomorphisms $\lambda$ of $\RR$. Then define
\begin{equation}
d_D(\phi,\psi)= \sum_{n=1}^{\infty} 2^{-n} \left( d^{(n)}_D(\phi,\psi)\wedge 1 \right) .
\end{equation}
Then $(D^\circ(\RR,\DDD),d_D)$ is a \emph{complete and separable} metric space. Moreover \emph{$d_C$ and $d_D$ generate the same topology on $C^\circ (\RR,\DDD)$}. \emph{For the metric $d_D$, all bounded intervals $I$ and all $x \in \RR$, the} evaluation map
\begin{equation}
\phi \mapsto \phi^+_I (x) : D^\circ(\RR,\DDD) \rightarrow \RR
\end{equation}
is \emph{Borel measurable}. \emph{Moreover the Borel $\sigma-algebra$ on $D^\circ(\RR,\DDD)$ is generated by the set of all such evaluation maps with $I=(s,t]$ and $s,t$ and $x$ rational}.

\emph{For $e=(s,x) \in \RR$ and $\phi \in D^\circ(\RR,\DDD)$, the maps}
\begin{equation}
t \mapsto \phi^\pm_{(s,t]}(x) : [s,\infty) \rightarrow \RR
\end{equation}
\emph{are cadlag}. Hence we can define $Z^e = Z^{e,+}$ and $Z^{e,-}$, as maps from $D^\circ(\RR,\DDD)$ to $D_e$, by setting
\begin{equation}
Z^{e,\pm}(\phi)=(\phi^\pm_{(s,t]}(x):t\geq s).
\end{equation}
\emph{The maps, $t\rightarrow Z^{e,\pm}_t(\phi)$ are continuous when} $\phi \in C^\circ(\RR,\DDD)$.

Finally, define a $\sigma$-algebra $\FFF$ and a filtration $(\FFF_t)_{t\in \RR}$ on $C^\circ(\RR,\DDD)$ by
\begin{equation}
\FFF=\sigma(Z^{(s,x)}_t:{(s,x)}\in \RR^2, t\geq s)
\end{equation}
and
\begin{equation}
\FFF_t=\sigma(Z^{(s,x)}_r: {(s,x)}\in \RR^2, r\in (-\infty,t] \cap [s,\infty)).
\end{equation}
Then $\FFF_t$ \emph{is generated by the random variables} $Z^{(s,x)}_r$ with ${(s,x)}\in \QQ^2$ and $r \in (-\infty,t] \cap [s,\infty)$, and \emph{$\FFF$ is the Borel $\sigma$-algebra of the metric $d_C$}.

The following theorem states the existence of coalescing diffusive flows. The proof is identical to that of the less general result Theorem 3.1 in \cite{NT} and so is omitted. Generalizing the argument requires generalized versions of results from \cite{NT}, which we give as Proposition \ref{countableweb} and Proposition \ref{technicalproposition}.

Analogously to $T^{jk}$, if $e=(s,x)$ and $e'=(s',x')$ we define
\begin{equation}
T^{ee'}=\inf \{ t\geq s \vee s' : Z^e_t - Z^{e'}_t \in \ZZ \}.
\end{equation}

\begin{thm}
	\label{webexists}
	Given $a,b$ as before, there exists a unique Borel probability measure $\mu^{a,b}_A$ on $C^\circ(\RR,\DDD)$ under which, for all $e=(s,x),e'=(s',x')\in\RR^2$, the processes
	\begin{equation}
	\label{processone}
	\left( Z^e_t-\int_{s}^t b(r,Z^e_r) dr \right)_{t\geq s}
	\end{equation}
	and
	\begin{equation}
	\label{processtwo}
	\left(Z^e_tZ^{e'}_t-\int_{s \vee s'}^t\left( Z^{e}_rb(r,Z^{e'}_r)+Z^{e'}_rb(r,Z^e_r)\right)dr-\int_{T^{ee'} \wedge t}^t a(r,Z^{e'}_r) dr \right)_{t\geq s \vee s'}
	\end{equation}
	are continuous local martingales with respect to $(\FFF_t)_{t\in\RR}$. Moreover, for all $e\in\RR^2$ we have $\mu^{a,b}_A$-almost surely $Z^{e,+}=Z^{e,-}$.
\end{thm}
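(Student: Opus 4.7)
Following the Brownian case (Theorem 3.1 of \cite{NT}), the plan is to build $\mu^{a,b}_A$ by extending the countable-family measure $\mu^{a,b}_E$ of Proposition \ref{countableweb} to a full flow, obtaining uniqueness by restricting back to a countable skeleton and existence by reconstructing the flow pointwise from one. For uniqueness, if $\mu$ is any candidate Borel measure on $C^\circ(\RR,\DDD)$ making \eqref{processone}--\eqref{processtwo} continuous local martingales for all $e,e'\in\RR^2$, then for any countable $E\subset\RR^2$ the pushforward of $\mu$ under $\phi\mapsto(Z^{e_k}(\phi))_k$ satisfies the hypotheses of Proposition \ref{countableweb} and hence equals $\mu^{a,b}_E$. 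Taking $E=\QQ^2$ and recalling that $\FFF$ is generated by $\{Z^e_t:e\in\QQ^2,\,t\in\QQ\cap[s(e),\infty)\}$ then pins down $\mu$.

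For existence, I would realize on a single probability space a coalescing family $(Z^{e_k})_{k\geq 1}$ with joint law $\mu^{a,b}_E$ for $E=(e_k)$ dense in $\QQ^2$. The cyclic order on $\RR/\ZZ$ is preserved by the paths until collision and afterwards by coalescence, which makes the skeleton monotonically consistent, so one can define
\begin{equation}
\phi^+_{ts}(x)=\lim_{e_k\to(s,x)+}Z^{e_k}_t,\qquad \phi^-_{ts}(x)=\lim_{e_k\to(s,x)-}Z^{e_k}_t,
\end{equation}
with limits taken over skeleton points approaching $(s,x)$ from above and below in space, with start-times tending to $s$ from below. I would then verify in turn: existence of these limits a.s.\ for all $s<t$ and $x\in\RR$; that $\phi_{ts}=\{\phi^-_{ts},\phi^+_{ts}\}\in\DDD$; the weak flow inequalities, which follow from skeleton monotonicity together with coalescence; continuity in $t$, upgrading $\phi$ to $C^\circ(\RR,\DDD)$, using Proposition \ref{technicalproposition} to bound uniform oscillation of skeleton paths on small time intervals; and the martingale conditions \eqref{processone}--\eqref{processtwo} for arbitrary $e,e'\in\RR^2$, obtained by approximating along the skeleton and passing to the limit via the uniform bounds $a^*,b^*$ on compacts together with Proposition \ref{countableweb}.

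The remaining assertion $Z^{e,+}=Z^{e,-}$ a.s.\ for every $e=(s,x)\in\RR^2$ is where I expect the main technical work. Both processes are continuous (as recalled in the excerpt), so it suffices to show $Z^{e,+}_t=Z^{e,-}_t$ a.s.\ for each fixed $t>s$; equivalently, that the starting space point $x$ is a.s.\ not a jump of the monotone function $\phi_{(s,t]}$. For rational skeleton points $e^\pm_n=(s_n,x^\pm_n)$ with $s_n\uparrow s$ and $x^\pm_n$ approaching $x$ from the right and from the left, one has the sandwich $Z^{e^-_n}_t\leq\phi^-_{(s,t]}(x)\leq\phi^+_{(s,t]}(x)\leq Z^{e^+_n}_t$, which reduces matters to a pinching estimate: showing that $Z^{e^+_n}$ and $Z^{e^-_n}$ coalesce by time $t$ with probability tending to $1$ as $n\to\infty$. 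In the Brownian case this is classical; here, prior to coalescence the semimartingale $Z^{e^+_n}-Z^{e^-_n}$ can be controlled using $a_*\leq a\leq a^*$ together with the spatial Lipschitz bound on $b$ by a time-changed Brownian motion, to which standard hitting-time bounds for the origin apply, and this is where the generalization beyond the Brownian setting of \cite{NT} really needs the uniform ellipticity condition $a_*>0$.
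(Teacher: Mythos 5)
Your overall plan — reduce to the countable skeleton $\mu^{a,b}_E$ of Proposition \ref{countableweb}, use Proposition \ref{technicalproposition} to see that the skeleton a.s.\ extends to an element of $C^\circ(\RR,\DDD)$, push forward through the (measurable) reconstruction map, and close uniqueness via the generation of $\FFF$ by rational evaluations — is the same as the paper's. The paper packages the reconstruction step by citing from \cite{NT} that $Z^{E,\pm}$ is a measurable bijection onto $C^{\circ,\pm}_E$ with measurable inverse $\Phi^{E,\pm}$, so the explicit one-sided limits you propose to build and the weak-flow and continuity verifications you plan to carry out are precisely what those results (together with Proposition \ref{technicalproposition}) already supply.

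Where your route genuinely diverges is in the final two pieces. For the martingale properties at arbitrary $e,e'\in\RR^2$ and for $Z^{e,+}=Z^{e,-}$, you propose to approximate along rational skeleton points and pass to the limit (resp.\ run a pinching and coalescence estimate). The paper instead uses an extended-skeleton trick: enlarge the enumeration to $E'=(e_{-1},e_0,e_1,\dots)$ including the two arbitrary points, repeat the construction to obtain $\mu'^{a,b}_A$, and invoke the already-proved uniqueness to conclude $\mu'^{a,b}_A=\mu^{a,b}_A$; the claim $Z^{e,+}=Z^{e,-}$ then drops out from $\Phi^{E',+}=\Phi^{E',-}$ on $C^{\circ,+}_{E'}\cap C^{\circ,-}_{E'}$, which has full $\mu^{a,b}_{E'}$ measure by Proposition \ref{technicalproposition}. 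This avoids entirely the limiting arguments you would need to make rigorous: passing the local martingale property through a spatial limit (requiring a uniform integrability step), and the hitting-time estimate underlying your pinching lemma (which, incidentally, is essentially already present inside the proof of Proposition \ref{technicalproposition}, so you would be re-deriving it). Both approaches work; the paper's extended-skeleton device is shorter and avoids duplicating the coalescence estimate.
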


We will often write $\mu_A$ instead of $\mu^{a,b}_A$ in order to simplify notation.

\vspace{10pt}
\section{Time Reversal}
\label{TimeReversal}
\vspace{10pt}

In this section we quote some definitions and observations from \cite{NT} and then state our main theorem. For $f^+\in \RRR$ and $f^-\in \LLL$, define the \emph{left-continuous inverse} from $\RRR$ to $\LLL$ and the inverse operation \emph{right-continuous inverse} respectively as follows
\begin{equation}
(f^+)^{-1}(y)=\sup\{x\in \RR : f^+(x)<y\},
\end{equation}
\begin{equation}
(f^-)^{-1}(y)=\inf\{x\in \RR : f^-(x)>y\}.
\end{equation}
Note that these operations are distributive over concatenation. The \emph{inverse} of $f \in \mathcal{D}$ is given by
\begin{equation}
f^{-1}=\{(f^+)^{-1},(f^-)^{-1}\}\in \DDD.
\end{equation}
The \emph{time-reversal} $\hat\phi$ of a flow $\phi$ is given by
\begin{equation}
\hat\phi_I = \phi^{-1}_{-I}.
\end{equation}
The time-reversal map is a well defined isometry of both $D^\circ(\RR,\DDD)$ and $C^\circ(\RR,\DDD)$.

As before, let $a$ and $b$ be the diffusivity and drift of a diffusive flow with law $\mu_A$. We require that $a$ and $b$ satisfy the smoothness requirements of Section \ref{CountableDiffusions} and further require that $a$ is differentiable with respect to $x$ with derivative $a'(t,x)$, which is $L$-Lipschitz in $x$ and measurable and bounded uniformly on compacts in $t$.
Let $ a^\nu(t,x)=a(-t,x)$ and $b^\nu(t,x)=-b(t,-x)+a'(-t,x)/2$, let $\nu_A=\mu^{a^\nu,b^\nu}_A$,  i.e. let it be the law of a disturbance flow with drift and diffusivity given by $b^\nu$ and $a^\nu$. Finally, write $\hat\mu_A$ for the image measure of $\mu_A$ under time-reversal.

\begin{thm}
	\label{reversalidentify}
	The time-reversal of the diffusive flow $\mu_A$ is a diffusive flow with the new parameters given in the previous paragraph, i.e.
	\begin{equation}
	\hat\mu_A=\nu_A.
	\end{equation}
\end{thm}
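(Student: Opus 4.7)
The plan is to exploit the uniqueness half of Theorem \ref{webexists}: if I can show that under $\hat\mu_A$, for every pair $e=(s,x), e'=(s',x') \in \RR^2$, the processes \eqref{processone} and \eqref{processtwo} formed with $(a^\nu, b^\nu)$ in place of $(a,b)$ are continuous local martingales for the natural filtration, then by uniqueness $\hat\mu_A = \mu^{a^\nu,b^\nu}_A = \nu_A$. So the whole problem reduces to identifying the semimartingale decompositions of single and paired time-reversed trajectories.

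The heart of the matter is a local calculation. Over a short interval of length $h$ the forward trajectory takes the step $y = x + b(t,x)h + \sigma(t,x)\,W_h + o(h)$, where I write $\sigma = \sqrt{a}$ only as a notational shorthand for the variance statement given by Proposition \ref{countableweb}. Inverting this relation and Taylor-expanding $b$ and $\sigma$ around $y$ rather than $x$, together with the identity $W_h^2 = h + o(h)$, yields
\begin{equation}
x = y - b(t,y)\,h - \sigma(t,y)\,W_h + \sigma(t,y)\sigma'(t,y)\,h + o(h).
\end{equation}
Thus the inverse step has infinitesimal drift $-b + \sigma\sigma' = -b + a'/2$ and the same infinitesimal variance $a$, producing exactly the parameters $(a^\nu,b^\nu)$ once the sign change of time is accounted for. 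This is the only place where the Lipschitz hypothesis on $a'$ is used, since the expansion demands one controlled derivative of $\sigma^2 = a$.

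To turn this into a proof I would discretize time on a grid of mesh $\delta$, write $\phi_{t_{k+1} t_k}$ on each subinterval as a genuine (non-coalescing) diffusion step followed by the coalescence rule, invert step by step, and sum the resulting semimartingale decompositions. Passing $\delta \downarrow 0$ inside the martingale identities \eqref{processone}, \eqref{processtwo} requires (i) a uniform $L^2$ bound on the error in inverting one diffusion step, which is where the Lipschitz estimate on $a'$ enters quantitatively, and (ii) verification that coalescence is preserved under inversion; fortunately $\phi^{-1}_{ts}$ is again monotone, so two inverted trajectories that meet stay together, and the cross-martingale \eqref{processtwo} is then forced by the single-trajectory calculation together with independence up to collision.

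The main obstacle I anticipate is removing the smoothness in time: the natural grid argument is cleanest if $a$ and $b$ are also Lipschitz in $t$ (this should give the weaker Theorem \ref{reversalLipschitz} claimed in the introduction), and upgrading to merely measurable time-dependence presumably requires a further mollification in $t$ together with a stability statement for the map $(a,b)\mapsto \mu^{a,b}_A$. A clean alternative, presumably carried out in Section \ref{DisturbanceProof}, is to approximate $\mu_A$ by disturbance flows via Theorem \ref{flowconv}, for which each atomic disturbance can be inverted explicitly, and then pass the identification through the weak limit using the continuity of time-reversal on $D^\circ(\RR,\DDD)$.
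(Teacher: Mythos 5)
Your plan captures the architecture of both proofs in the paper: you correctly identify that the job is to verify, under $\hat\mu_A$, that the processes \eqref{processone} and \eqref{processtwo} with $(a^\nu,b^\nu)$ are local martingales and then invoke the uniqueness in Theorem \ref{webexists}; you have the right local heuristic producing the drift $-b+\sigma\sigma'=-b+a'/2$; you anticipate that time-Lipschitz coefficients should be handled first (this is exactly Theorem \ref{reversalLipschitz}) and then removed by mollification and a stability statement for $(a,b)\mapsto\mu^{a,b}_A$ (this is exactly what Section \ref{DirectProof} does via Propositions \ref{ANcompact}--\ref{EMcontinuousinphi}); and you correctly read off the alternative route through disturbance flows and Theorem \ref{flowconv} that is carried out in Section \ref{DisturbanceProof}. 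The essential outline is sound.

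Two points, however, deserve flagging as genuine gaps rather than bookkeeping. First, the ``write $\phi_{t_{k+1}t_k}$ as a non-coalescing diffusion step followed by the coalescence rule, invert step by step'' scheme is not what the paper does and is harder to make rigorous than you indicate: the inverse of (diffusion then coalesce) is not (un-coalesce then invert diffusion), and the inverse map is discontinuous wherever the forward map is constant, so the reversed trajectory from a generic $y$ is well-behaved but the step-wise decomposition itself is not. The paper's direct proof avoids this by never discretizing the flow; it instead computes the conditional law of $\hat\phi_{t+h,t}(y)$ directly, through the identity $\PP(\hat\phi_{t+h,t}(y)<x)=\PP(\phi_{-t,-t-h}(x)>y)$, a local replacement of $(a,b)$ by explicit quadratic/linear $(\tilde a,\tilde b)$ making the approximating SDE solvable in closed form via the change of variable $f(x)=\frac{2\sqrt a}{a'}\log(x-y+\frac{2a}{a'})$, and exponential localization estimates in $h$; the moment asymptotics then feed into a Donsker/invariance-principle identification. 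This yields much tighter error control than the ``uniform $L^2$ bound on the inversion error'' you gesture at, and it is where the Lipschitzness of $a'$ actually does its work.

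Second, and more seriously: you assert that the cross-martingale \eqref{processtwo} ``is then forced by the single-trajectory calculation together with independence up to collision,'' but the independence of the reversed trajectories from distinct $y_1\neq y_2$ before they meet is precisely what has to be proved. Unlike the forward flow, where independence before collision is built into the construction, the reversed trajectories $\hat\phi_{t+\cdot,t}(y_1)$ and $\hat\phi_{t+\cdot,t}(y_2)$ are both functionals of the same forward flow on $[-t-h,-t]$, so a priori there is no independence at all. The paper closes this by showing $\mathrm{Cov}(\hat\phi_{t+h,t}(y_1),\hat\phi_{t+h,t}(y_2))=o(h)$ via a localization argument: on the event that both paths stay within disjoint spatial windows around $y_1$ and $y_2$, they are measurable with respect to independent pieces of the forward flow, and the complementary event has negligible contribution. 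Without something playing this role, the identification of the two-point motion, hence the invocation of uniqueness, is incomplete. You should supply an argument of this kind before claiming the cross-variation.
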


We will provide two proofs of this theorem: one in Section \ref{DisturbanceProof} which depends on Section, \ref{DisturbanceFlows} and one in Section \ref{DirectProof} which does not depend on Sections \ref{DisturbanceFlows} or \ref{DisturbanceProof}.

\vspace{10pt}
\section{Disturbance Flows from Countably Many Points on a Circle}
\label{DisturbanceFlows}
\vspace{10pt}

This section lays the ground work for Section \ref{DisturbanceProof}. The reader may skip to Section \ref{DirectProof} at this point if they only wish to read the direct proof.

We start this section by defining the notion of a disturbance flow on the circle. This is based on a notion of disturbance flow which was given in \cite{NT}, but is more general, so as to allow for our disturbance flows to have drift and varying diffusivity. We will then proceed to state and prove two propositions and deduce a theorem. The propositions are as follows: firstly, under appropriate conditions a sequence of single paths from disturbance flows can converge to a diffusion process; and secondly, a sequence of countable families of paths from disturbance flows can converge to a countable family of coalescing diffusions. Combining these propositions with a result from \cite{NT}, we conclude that disturbance flows can converge to coalescing diffusive flows.

We specify a disturbance flow by a family of probability distributions on $\DDD$ written
\begin{equation}
\eta=\{\eta_{h,t}:h>0,t\in \RR\}.
\end{equation}
The parameters of the family are $h>0$, which corresponds to the size of the disturbance (the limit for our convergence later will be taking $h$ to $0$ while making disturbances more frequent) and time $t$, which allows our flow to be inhomogeneous in time. We require that $\eta$ be measurable as a function of $t$.

Given $f_1,f_2 \in \DDD$, define $f_2\circ f_1 := \{f_2^-\circ f_1^-,f_2^+\circ f_1^+\}$. This is not in general an element of $\DDD$, however, so long as $f_1$ sends no interval of positive length to a point of discontinuity of $f_2$, we will have $f_2\circ f_1\in \DDD$. To avoid this issue, we will only consider families of probability distributions on $\DDD$ such that, if $F_{h,t}\sim \eta_{h,t}$, then
\begin{equation}
\label{D*def}
F_{h,t}^+(x)=F_{h,t}^-(x) \textrm{ a.s. } \forall x,t \in \RR \textrm{ and } h\in \RR^+.
\end{equation}
Where $\RR^+=\{h \in \RR : h>0\}$.
We denote the set of such families by $\DDD^*$, and assume from here on that $\eta=\{\eta_{h,t}: h>0, t \in \RR \}\in\DDD^*$.  Let $N$ be a Poisson random measure on $\RR$ of intensity $h^{-1}$ and set
\begin{equation}
N_t=\begin{cases} N(0,t], &t\geq 0 \\
-N(t,0], &t<0. \end{cases}
\end{equation}
Let
\begin{equation}
t_n=\inf\{t : N_t\geq n\}
\end{equation}
and $\{F_{h,t_n}: h>0,n\in\NN\}$ be independent random variables with $F_{h,t_n}\sim \eta_{h,t_n}$. We will sometimes write $F_n$ for $F_{h,t_n}$.

We extend the inverse functions of Section \ref{TimeReversal} to families of probability distributions $F \in \DDD^*$ by setting
\begin{equation}
(F^{-1})_{h,t}(y)=(F_{h,t})^{-1}(y)
\end{equation}
where the inverse on the right hand side is being taken with respect to the $x$ argument (as opposed to the implicit $\omega$ argument). Also let $\tilde{F}_{h,t}(x)=F_{h,t}(x)-x$.

Then, for any interval $I$, define
\begin{equation}
\Phi_{I}(x)=x+\int_I \tilde{F}_{h,r}(\phi_{I\cap (-\infty,r)})N(dr).
\end{equation}
Write $\Phi$ for the family of maps $\Phi_I$ where $I$ ranges over all bounded intervals in $\RR$. We call $\Phi$ the \emph{Poisson disturbance flow} or just the \emph{disturbance flow} and write $\mu^\eta_A$ for the distribution of $\Phi$ in $D^\circ(\RR,\DDD)$.

Fixing $e=(s,x) \in \RR^2$ we define 2 processes $X^{e,\pm}_t$ by setting $X^{e,\pm}_t=\Phi^\pm_{(s,t]}(x)$ for $t\geq s$. Because $\Phi\in\DDD$ a.s. we have a.s. that for all $ t\in \QQ_{\geq s}$
\begin{equation}
X^{e,-}_t=X^{e,+}_t
\end{equation}
and thus by right continuity of $X^{e,\pm}$ we have a.s. that for all $ t\geq s$,
\begin{equation}
X^{e,-}_t=X^{e,+}_t.
\end{equation}
Thus, we drop the $\pm$ and write simply $X^e$. Write $\mu^\eta_e$ for the distribution of $X^e$ on the Skorokhod space $D_e$. Similarly, for $E=(e_k\in \RR^2:k\in\NN)$, $(X^{e_k}: k\in \NN)$ is a random variable in $D_E$, and we write $\mu^\eta_E$ for its distribution on $D_E$.

Given a family $\eta \in \DDD^*$, and coefficients $a$ and $b$ as in Section \ref{CountableDiffusions}, we define the functions
\begin{equation}
b_h(t,x)=\frac{1}{h}\EE(\tilde{F}_{h,t}(x))
\end{equation}
\begin{equation}
a_h(t,x)=\frac{1}{h}\EE( \tilde{F}_{h,t}(x)^2)
\end{equation}
\begin{equation}
M_h=\sup_{x\in [0,1], |t| \leq T, \omega \in \Omega } \abs{\tilde{F}_{h,t}(x)}
\end{equation}
\begin{equation}
B_h=\sup_{x\in [0,1], |t| \leq T} \abs{b_h-b}
\end{equation}
\begin{equation}
A_h=\sup_{x\in [0,1], |t| \leq T} \abs{a_h-a}.
\end{equation}
The following three conditions will be important for the next proposition and consequently for the rest of the results:
\begin{align}
\label{convbh}
\lim_{h\searrow 0} B_h=0
& \hspace{10mm}\forall T \in \RR^+ \\
\label{convah}
\lim_{h\searrow 0} A_h=0
& \hspace{10mm}\forall T \in \RR^+ \\
\label{jumpbound}
\lim_{h\searrow 0} M_h=0
& \hspace{10mm}\forall T \in \RR^+ .
\end{align}
\begin{prop}
	\label{singleconv}
	Suppose $a$ and $b$ are functions as specified for Equation \eqref{SDE}, and that $\eta$ is such that conditions \eqref{convbh}, \eqref{convah} and \eqref{jumpbound} hold. Then we have $\mu_e^\eta \rightarrow \mu_e^{a,b}$ weakly on $D_e$, as $h \rightarrow 0$.
\end{prop}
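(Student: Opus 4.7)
The plan is to follow the classical three-step scheme for weak convergence of Markov processes to a diffusion: write down the natural semimartingale decomposition of $X^e$ under $\mu^\eta_e$, establish tightness of $\{\mu^\eta_e\}_{h>0}$ in $D_e$, and identify every subsequential weak limit as the unique solution of the single-path martingale problem of Proposition \ref{countableweb}. For the decomposition, note that $X^e$ is a Markovian pure-jump process, constant between the Poisson times $t_n$ and jumping by $\tilde F_{h,t_n}(X^e_{t_n-})$ at $t_n$, with generator $(\LLL^h f)(t,x)=h^{-1}\EE[f(x+\tilde F_{h,t}(x))-f(x)]$. Applying this to $f(y)=y$ and $f(y)=y^2$ shows that
\begin{equation}
M^{(1),h}_t = X^e_t - x - \int_s^t b_h(r,X^e_r)\,dr, \qquad M^{(2),h}_t = (X^e_t)^2 - x^2 - \int_s^t \bigl(2 X^e_r b_h(r,X^e_r) + a_h(r,X^e_r)\bigr) dr
\end{equation}
are local martingales in the natural filtration of $X^e$, and $\langle M^{(1),h}\rangle_t = \int_s^t a_h(r,X^e_r)\,dr$. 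The uniform boundedness of $b_h,a_h$ on the time window $[s,T]$, which follows automatically from $B_h,A_h\to 0$ together with the finiteness of $b^*,a^*$, yields $L^2$ control of these martingales on each compact interval.

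For tightness I would use Aldous' criterion. Given any family of $(\FFF_t)$-stopping times $\tau_h\leq T$ and any deterministic $\delta_h\to 0$, I would split $X^e_{(\tau_h+\delta_h)\wedge T}-X^e_{\tau_h}$ into the drift $\int_{\tau_h}^{\tau_h+\delta_h} b_h(r,X^e_r)\,dr$, whose absolute value is bounded by $(b^*+B_h)\delta_h$, and the martingale increment $M^{(1),h}_{\tau_h+\delta_h}-M^{(1),h}_{\tau_h}$, whose $L^2$-norm is bounded by $((a^*+A_h)\delta_h)^{1/2}$ via optional sampling applied to $\langle M^{(1),h}\rangle$. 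Both tend to zero in probability; combined with the uniform jump bound $|\Delta X^e|\leq M_h\to 0$ this yields tightness of $\{\mu^\eta_e\}$ on $D_e$ and forces every weak subsequential limit to have continuous paths.

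To identify the limit, fix $s\leq s'<t$, any bounded continuous $\FFF_{s'}$-measurable functional $\Phi$ on $D_e$, and $i\in\{1,2\}$. The martingale property of $M^{(i),h}$ gives $\EE[\Phi\cdot(M^{(i),h}_t-M^{(i),h}_{s'})]=0$. Localising at $\tau_R=\inf\{r:|X^e_r|\geq R\}$ provides the uniform integrability needed to pass to the limit along a weakly convergent subsequence, and the convergences $B_h\to 0$, $A_h\to 0$ together with continuity of the limiting paths let us replace $b_h,a_h$ by $b,a$ in the integrals; letting $R\to\infty$ then yields the analogous identity for the limit $X$. Hence $X$ satisfies the single-path case of the martingale problem of Proposition \ref{countableweb}, whose solution is unique and equal to $\mu_e^{a,b}$, giving convergence along the whole family.

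The main obstacle, as flagged in the introduction, is tightness: since $a,b$ vary in both time and space with only measurability in $t$ and Lipschitz regularity in $x$, the uniform-in-$h$ displacement bounds at stopping times must come from \eqref{convbh}--\eqref{jumpbound} alone, combined with the a priori local bounds $b^*,a^*$ and the strong Markov structure of the pure-jump process $X^e$. Once these bounds are in place, the identification of the limit is a routine martingale-problem argument.
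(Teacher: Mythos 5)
Your argument is correct and lands in the same place, but it is structurally leaner than the paper's proof in two respects, and both are worth noting. First, where the paper derives the approximate conditional-expectation identities \eqref{firstexpectationbound} and \eqref{secondexpectationbound} by conditioning on the number of Poisson jumps in $(s,t]$ and carrying the error terms $E_1,E_2,E_3$ through the computation, you simply apply Dynkin's formula for the time-inhomogeneous pure-jump generator $\LLL^h$ to $f(y)=y$ and $f(y)=y^2$, which produces the semimartingale decomposition \emph{exactly}, with $b_h$ and $a_h$ appearing by definition and no error terms to track; this is both cleaner and slightly more general. Second, for tightness you invoke Aldous' criterion, which only asks for uniform-in-stopping-time displacement bounds and is already satisfied by an $L^2$ estimate on $M^{(1),h}$ via the deterministically bounded bracket $\langle M^{(1),h}\rangle_T\leq (a^*+A_h)T$ together with the $O(\theta)$ drift bound; by contrast the paper verifies Billingsley's oscillation criterion, which requires a union bound over $O(1/\delta)$ subintervals and hence forces the use of Azuma--Hoeffding concentration precisely because a Chebyshev/Doob bound on $\PP(w_X(I)\geq\epsilon)$ is merely $O(\delta)$ and not $o(\delta)$. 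Aldous' criterion sidesteps that entirely, and your observation that the uniform jump bound $M_h\to0$ then upgrades $D$-tightness to $C$-tightness is the standard step. Do make the compact-containment part of Aldous explicit (it follows from the same $L^2$ bound on $X_t-x$), and note that the replacement of $b_h,a_h$ by $b,a$ in the identification step uses both $B_h,A_h\to0$ and the Lipschitz-in-$x$ regularity of $b,a$ to ensure continuity of $\phi\mapsto\int_{s'}^t b(r,\phi(r))\,dr$ on continuous paths. For the identification step itself you run the standard test-functional formulation of the martingale problem, whereas the paper builds a driving Brownian motion $W$ and applies L\'evy's characterization; these are essentially equivalent and neither is simpler than the other here.
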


\begin{proof}
	Let $(X^n)_{n\in \NN}$ be a sequence of processes distributed according to $\mu_e^\eta$ with $h\to 0$ as $n\to \infty$. By the definition of the Skorokhod metric, it suffices to show that for any $T>s$, the restrictions of $X^n$ to $[s,T]$ converge weakly to a solution of the SDE on $[s,T]$. For the remainder of this proof we consider $X^n$ to be restricted to $[s,T]$. We then take $e=(s,x)=(0,0)$ and $T=1$, without loss of generality.
	
	Firstly, we shall calculate (up to an error that is small for small $|t-s|$) two expected values (defined in terms of $s,t\in \RR$). We shall then prove a characterization of tightness of the sequence. This will require us to use these calculations to show that the process can't vary too much on a given interval, then deduce the existence of a subsequential limit of each subsequence by Prokhorov's theorem. Finally we will identify the distribution of every subsequential limit as a weak solution of Equation \eqref{SDE}, using again the 2 expectation calculations. Then we will conclude the proof using the uniqueness in law for such solutions .
	
	Let $\FFF^n_t$ be the completion of the filtration generated by $X^n$. For $1 \geq t\geq s \geq 0$ we have
	\begin{align}
	&\EE(X^n_t-X^n_s| \FFF^n_s) \\
	=&e^{-\frac{t-s}{h}}\frac{t-s}{h}\EE(\tilde{F}_{N_s+1}(X^n_s)|t_{N_s+1}\leq t<t_{N_s+2}, \FFF^n_s) + E_1 \\
	=&\int_s^t b_h(r,X^n_s) dr + E_1 + E_2 \\
	=&\EE \left( \int_s^t b_h(r,X^n_r) dr \right)+ E_1 + E_2 + E_3.
	\end{align}
	Where the approximation errors $E_i$ can be bounded as follows. Note that in the above calculation $h$ is held constant so can be thought of as such for these bounds. Let $G_{s,k}=\tilde{F}_{N_s+k}\circ ... \circ \tilde{F}_{N_s+1}$
	\begin{align}
          \abs{E_1}=&\left| \sum_{k\geq 2} \exp\left(-\frac{t-s}{h}\right) \frac{(t-s)^k}{k!h^k}\EE(G_{s,k}(X^n_s)|t_{N_s+k}\leq t<t_{N_s+k+1}) \right| \\
	\leq& \exp\left(-\frac{t-s}{h}\right) \sum_{k\geq 2} \frac{(t-s)^k}{k!h^k}kM_h \\
	=& \exp\left(-\frac{t-s}{h}\right) \frac{t-s}{h} M_h \sum_{k\geq 1} \frac{(t-s)^k}{k!h^k} \\
	=& M_h \frac{t-s}{h} \exp\left(-\frac{t-s}{h}\right) \left(\exp\left(\frac{t-s}{h}\right)-1\right) \\
	=& O\left((t-s)^2 \right).
	\end{align}
        Note that $\int_s^t b_h(r,X^n_s) dr=\frac{t-s}{h} \EE(\tilde{F}_{N_s+1}(X^n_s)|t_{N_s+1}\leq t<t_{N_s+2})$ to understand $E_2$, which can be bounded as follows,
	\begin{align}
	\abs{E_2}&=\left(1-\exp\left(-\frac{t-s}{h}\right)\right) \frac{t-s}{h} \abs{\EE(\tilde{F}_{N_s+1}(X^n_s)|t_{N_s+1}\leq t<t_{N_s+2},\FFF^n_s)} \\
	&\leq \left(1-\exp\left(-\frac{t-s}{h}\right)\right) \frac{t-s}{h} M_h \\
                 &= O\left((t-s)^2 \right).
        \end{align}
        Then finally,
        \begin{align}
	\abs{E_3}&= \EE \left( \left|\int_{s}^{t} \left(b_h(r,X^n_s)-b_h(r,X^n_r)\right)dr \right|\middle| \FFF^n_s \right)\\
	&\leq (t-s)(2b^*+2B_h)\PP(t_{N_s+1}\leq t) \\
	&=O\left((t-s)^2 \right).
	\end{align}
	
	Breaking the interval $(s,t]$ into a large number of small intervals and taking the limit as the interval sizes go to $0$, we have that,
	\begin{align}
	\label{firstexpectationbound}
	\EE(X^n_t-X^n_s\mid \FFF^n_s) &= \EE\left( \int_s^t b_h(r,X^n_r) dr \bigg| \FFF^n_s \right).
	\end{align}
	Similarly
	\begin{align}
	\label{secondexpectationbound}
	\EE((X^n_t-X^n_s)^2\mid \FFF^n_s) &= \EE\left( \int_s^t a_h(r,X^n_r) dr\bigg| \FFF^n_s \right).
	\end{align}
	
	The characterization of tightness that we shall use is given in Billingsley 1968 \cite{B} Theorem 15.3, it says that tightness is equivalent to the following two conditions holding.
	\begin{enumerate}
		\item For all $\epsilon>0$ there exists a $K$ such that
		\begin{equation}
		\PP\left(\sup_t \abs{X^n_t}\geq K\right)\leq \epsilon, \hspace{10mm} \forall n\geq 1.
		\end{equation}
		\item 	Taking
		\begin{equation}
		w''_{X^n}(\delta)=\sup_{\substack{t_1\leq t \leq t_2\\ t_2-t_1\leq \delta}} \min\{\abs{X^n(t)-X^n(t_1)},\abs{X^n(t)-X^n(t_2)}\}
		\end{equation}
		and
		\begin{equation}
		w_{X^n}(I)=\sup_{s,t\in I}\abs{X^n_s-X^n_t}
		\end{equation}
		for all $\epsilon>0$ there exists $\delta \in (0,1)$ and $N\in \NN$ such that
		\begin{equation}
		\PP(w''_{X^n}(\delta)\geq \epsilon)\leq \epsilon, \hspace{10mm} \forall n\geq N
		\end{equation}
		and
		\begin{equation}
		\PP(w_{X^n}[0,\delta)\geq \epsilon)\leq \epsilon, \hspace{10mm} \forall n\geq N
		\end{equation}
		and
		\begin{equation}
		\PP(w_{X^n}(1-\delta,1]\geq \epsilon)\leq \epsilon, \hspace{10mm} \forall n\geq N.
		\end{equation}
	\end{enumerate}
	
	Note that $B_h$ and $A_h$ going to $0$ as $n\to \infty$ means that $|b_h|$ and $|a_h|$ are bounded uniformly in $n$, $x$ and $t\in [0,1]$. We call the bounds $B$ and $A$ respectively.
	
	The first condition can be shown as follows, where $T_K$ is the first time $t$ such that $X^n_t\geq K$.
	\begin{align}
          &\PP\left(\sup_{t\leq 1}X^n_t \geq K\right) \\
          =& \PP\left(T_K \leq 1\right) \\
          \leq& \PP\left( X^n_1 \geq \frac{K}{2}\right) +\PP\left( T_K \leq 1, X^n_1 \leq \frac{K}{2} \right) \\
          \leq& \PP\left( X^n_1 \geq \frac{K}{2} \right) + \EE \left( \PP \left( X^n_1-X^n_{T_k\wedge 1} \leq -\frac{K}{2} \bigg| \FFF_{T_K\wedge 1} \right) \right) \\
          \leq& \frac{2A}{(\frac{K}{2}-B)^2}
	\end{align}
	where in the final inequality we have used Chebyshev's inequality. This bound goes to $0$ as $K\to \infty$ uniformly in h. Combining with a corresponding bound for $\inf X^n_t$ gives the first condition.
	
	Note that for the second condition, it suffices to show the following stronger statement, where $\mathbb{I}_\delta$ is the set of subintervals of $[0,1]$ of length $\delta$.
	
	For all $\epsilon > 0$ there exists $\delta \in (0,1)$ and $N \in \NN$ such that
	\begin{equation}
	\label{continuitycondition}
	\PP(\exists I\in \mathbb{I}_\delta \textrm{ such that } w_{X^n}(I)\geq \epsilon) \leq \epsilon \hspace{10mm} \forall n>N
	\end{equation}
	which in turn is implied by the following, where $\mathbb{I}'_\delta$ is the set of intervals of length $\delta$ with endpoints that are multiples of $\delta/2$.
	
	For all $\epsilon > 0$ there exists $\delta$ with $2\leq \frac{1}{\delta} \in \NN$ and $N \in \NN$ such that
	\begin{equation}
	\PP(\exists I\in \mathbb{I}'_\delta \textrm{ such that } w_{X^n}(I)\geq \epsilon) \leq \epsilon \hspace{10mm} \forall n>N.
	\end{equation}
	
	There are only $\frac{2}{\delta}$ elements in $\mathbb{I}'_\delta$, so using a union bound it suffices to show that for sufficiently small $h$ and some $\delta$ we have
	\begin{equation}
	\sup_{I\in \mathbb{I}'_\delta} \PP(w_{X^n}(I)\geq 4\epsilon) \leq \frac{\delta\epsilon}{2}
	\end{equation}
	where a factor of 4 has been included purely for convenience later.
	
	We present the proof for $I=[0,\delta]$ but the same argument and bound will hold for all $I \in \mathbb{I}'_\delta$. We have that
	\begin{equation}	
	\PP(w_{X^n}(I)\geq 4\epsilon)\leq \PP\left( \sup_{t\leq \delta}X^n_t \geq 2\epsilon \right) + \PP\left( \inf_{t\leq \delta}X^n_t \leq -2\epsilon \right).
	\end{equation}
	We will bound the first term on the right with a bound that will also apply to the second term by symmetry.
	
	Unfortunately, Chebyshev's inequality is not strong enough to bound the first term sufficiently tightly. We will apply the Azuma-Hoeffding inequality which requires the following set-up. Let $X'^n_t=X^n_t-tB$ and note that this is a super-martingale. Fix $0<\alpha<\frac{1}{2}$, let $R_0=0$ and for $i\geq 1$ let $R_i$ be the first time $t>R_{i-1}$ such that $\lvert X'^n_t - X'^n_{R_{i-1}} \rvert \geq M_h^\alpha$.
	
	Firstly, we show that only about $\delta M_h^{-2\alpha}$ of the $R_i$ are less than $\delta$. Consider the distribution of $R_i-R_{i-1}$ conditional on $\FFF^n_{R_{i-1}}$, by the same argument used in the first condition we have the following for $l<\frac{M_h^\alpha}{4B}$
	\begin{align}
	\PP(R_i-R_{i-1}\leq l)&\leq \frac{2lA}{(M_h^\alpha - 2lB)^2} \\
	&\leq l\frac{8A}{M_h^{2\alpha}}.
	\end{align}
	From which we deduce that $R_i-R_{i-1}$ stochastically dominates the uniform distribution on $[0,\frac{M_h^{2\alpha}}{8A}]$ for sufficiently small $h$. An application of the Azuma-Hoeffding Inequality to uniform random variables gives
	\begin{equation}
	\PP(R_{\ceil{\frac{32A\delta}{M_h^{2\alpha}}}}\leq \delta)\leq \exp \left( -\frac{A\delta}{M_h^{2\alpha}} \right).
	\end{equation}
	Thus letting $J=\ceil{\frac{32A\delta}{M_h^{2\alpha}}}$ and $R$ be the minimum of $R_J$ and the first time $R_i$ such that $X'_{R_i}>\epsilon-\frac{M_h}{2}$,
	\begin{equation}
	\PP\left( \sup_{t\leq \delta}X'^n_t \geq 2\epsilon \right) \leq \exp \left( -\frac{A\delta}{M_h^{2\alpha}} \right) + \PP\left( \sup_{i \leq J }X'^n_{R_i} \geq 2\epsilon-M_h \right)
        \end{equation}
        and
	\begin{align}
	\PP&\left( \sup_{i \leq J }X'^n_{R_i} \geq 2\epsilon-M_h \right) \leq \nonumber \\
	& \hspace{10mm}\PP\left( X'^n_{R_J} \geq \epsilon-\frac{M_h}{2} \right) + \EE \left( \PP \left( X'^n_{R_J}-X'^n_{R} \leq -\epsilon+\frac{M_h}{2} \bigg| \FFF^n_{R} \right) \right).
	\end{align}
	We will bound the first term on the right of the last inequality, and note the second term can be bounded similarly. Let $X''^n_i=X'^n_{R_i}-iM_h$. Note that this is a discrete super-martingale with step size bounded by $M_h^\alpha+M_h$.
	\begin{align}
	\PP\left( X'^n_{R_J} \geq \epsilon-\frac{M_h}{2} \right)&\leq \PP\left( X''^n_J \geq \epsilon- (J+1)M_h \right) \\
	&\leq \exp \left( -\frac{(\epsilon-(J+1)M_h)^2}{2J(M_h^\alpha+M_h)^2} \right) \\
	&\leq \exp \left( -\frac{\epsilon^2}{65A\delta}\right) \textrm{ for sufficiently small } h
	\label{128bound}
	\end{align}
	where we have used the Azuma-Hoeffding inequality again. Bringing these bounds together gives that for a given $\delta$ we have for sufficiently small $h$ that
	\begin{equation}
	\label{finaltightnessbound}
	\sup_{I\in \mathbb{I}'_\delta} \PP(w_{X^n}(I)\geq 4\epsilon) \leq 2 \exp \left( -\frac{A\delta}{M_h^{2\alpha}} \right) + 4 \exp \left( -\frac{\epsilon^2}{65A\delta}\right).
	\end{equation}
	Thus, by choosing $\delta$ so that the second term is less than $\frac{\delta\epsilon}{4}$, and then choosing $N$ such that, for all $n\geq N$ we have, $h$ is sufficiently small that the bound \eqref{128bound} holds and the first term is less than $\frac{\delta\epsilon}{4}$, we can conclude that the second condition holds and the sequence $\mu_e^\eta$ is tight.
	
	By Prokhorov's theorem, we now know that every subsequence has a weakly convergent subsequence, and by standard arguments it suffices to show that the limit of every such sequence is $\mu_e^{a,b}$ (restricted to $[0,1]$). Let $\mu$ be the limit of such a subsequence and $X$ be distributed according to $\mu$.
	
	We now show that $X$ is a solution of the SDE \eqref{SDE}. Let $(\FFF_t)_{t\geq s}$ be the completion of the filtration generated by $X$, and let $W$ be given by
	\begin{equation}
	W_t=\int_0^t \frac{1}{\sigma(s,X_s)} dX_s-\int_0^t \frac{b(s,X_s)}{\sigma(s,X_s)} ds.
	\end{equation}
	
	Note continuity of $X$ follows from the bound \eqref{continuitycondition}, and so $\FFF$ is right-continuous and thus satisfies the usual conditions. It is immediate that $X_s=x$ and Equation \eqref{SDEintegralform} holds by the definition of $W$.
	
	The identities \eqref{firstexpectationbound} and \eqref{secondexpectationbound} show in the limit $n\to \infty$ that both $X$ and the quadratic variation of $X$ are a.s. bounded on each compact interval. The same argument used to get these identities can also be used to find that
	\begin{equation}
	\EE(W_t-W_s| \FFF_s)=0
	\end{equation}
	and
	\begin{equation}
	\EE((W_t-W_s)^2| \FFF_s)=t-s.
	\end{equation}
	From the definition of $W$ and the continuity of $X$, we can deduce $W$ is continuous a.s., putting this together with the above expectations we can conclude by L\'evy-Characterization that $W$ is a $(\FFF_t)$-Brownian motion.
	
	Thus $X$ solves \eqref{SDE} and has the required law.
\end{proof}

Define $\lambda_h$ to be the infimum of $\lambda$ such that,

\begin{equation}
\label{localisation}
\lambda \leq \abs{x-y} \leq 1-\lambda \implies
\frac{1}{h}\EE (\abs{\tilde{F}_{h,t}(x)\tilde{F}_{h,t}(y)})<\lambda \hspace{10mm} \forall t.
\end{equation}

\begin{prop}
	\label{countableconv}
	Under the conditions of Proposition \ref{singleconv} and that $\lambda_h \rightarrow 0$, we have $\mu^\eta_E \rightarrow \mu_E^{a,b}$ weakly on $D_E$.
\end{prop}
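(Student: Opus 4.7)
The plan is the standard two-step pattern: first establish tightness of the family $\{\mu^\eta_E : h > 0\}$ on $D_E$, and then identify every subsequential weak limit as $\mu^{a,b}_E$ using the martingale characterisation of Proposition \ref{countableweb}. Prokhorov's theorem then yields full weak convergence.

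Tightness on the product space $D_E = \prod_k D_{e_k}$ reduces to tightness of each coordinate marginal, since the metric $d_E$ is the standard $2^{-k}$-weighted product metric. Tightness of each coordinate is exactly Proposition \ref{singleconv}, whose proof moreover forces every limit point to be supported on the continuous subset $C_E$. To identify a limit $\mu$, I need to verify that under $\mu$, for every $j,k$, the two processes appearing in Proposition \ref{countableweb} are continuous local martingales with respect to the joint filtration $(\ZZZ_t)$.

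The first (single-coordinate) martingale property carries over from the proof of Proposition \ref{singleconv} with no essential change: the estimate \eqref{firstexpectationbound} still holds if $\FFF^n_s$ is replaced by the full joint $\ZZZ_s$ generated by the disturbance flow, because the disturbance structure is Markov in the joint state. The genuinely new ingredient is the cross-coordinate martingale, for which the central computation is
\begin{equation}
\EE\bigl[ (X^{e_j}_t - X^{e_j}_s)(X^{e_k}_t - X^{e_k}_s) \bigm| \ZZZ_s \bigr] = \EE\biggl[ \int_s^t \mathbf{1}_{\{T^{jk} \le r\}}\, a(r, X^{e_j}_r)\, dr \biggm| \ZZZ_s \biggr] + o(t-s).
\end{equation}
The mean cross contribution per unit time from one Poisson disturbance at time $r$ equals $h^{-1}\EE[\tilde F_{h,r}(X^{e_j}_r)\tilde F_{h,r}(X^{e_k}_r)]$, which by the localisation bound \eqref{localisation} is at most $\lambda_h$ when the two points lie more than $\lambda_h$ apart on the circle, and which equals $a_h(r, X^{e_j}_r)$ when the paths coincide. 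Monotonicity of $F^\pm_{h,r}$ forces coalescence at the instant the paths meet, so the dichotomy between \emph{separated} and \emph{coalesced} is clean. Combining the two regimes and using $\lambda_h, A_h, B_h, M_h \to 0$ yields the displayed identity with errors that are $o(t-s)$ uniformly for small $h$, which integrates up to the required martingale property.

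The main obstacle will be passing the collision time $T^{jk}$ across the limit, since as a first-passage functional it is not continuous on Skorokhod space when two paths touch without strictly crossing. To handle this I would introduce the regularised stopping times $T^{jk}_\epsilon = \inf\{t : |Z^j_t - Z^k_t| \le \epsilon \bmod 1\}$, which are continuous functionals on a set of full measure under any limit law, verify the martingale identity with $T^{jk}_\epsilon$ in place of $T^{jk}$ for each fixed $\epsilon > 0$, and then send $\epsilon \to 0$ using nondegeneracy of $a$ so that $T^{jk}_\epsilon \nearrow T^{jk}$ almost surely under the limit. Once both martingale properties are verified for every subsequential limit, Proposition \ref{countableweb} identifies the limit as $\mu^{a,b}_E$ and full weak convergence follows.
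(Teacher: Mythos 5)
Your high-level strategy (tightness from coordinate marginals via Proposition \ref{singleconv}, then identification of subsequential limits through the martingale problem of Proposition \ref{countableweb}, and finally Prokhorov) matches the paper. The localisation bound \eqref{localisation} does indeed control the pre-collision cross-covariation exactly as you describe. However, two gaps remain in the post-collision and limit-passage parts of the argument.

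First, the regularisation you propose is the wrong one. Replacing $T^{jk}$ by $T^{jk}_\epsilon$ inside the formula $Z^j_t Z^k_t - \int (\cdots)\,ds - \int_{T^{jk}_\epsilon \wedge t}^t a\,ds$ does not yield a martingale for either the disturbance flow or the candidate limit: on the time interval $(T^{jk}_\epsilon, T^{jk})$ the two paths are still strictly apart (hence with essentially zero cross-variation under the limit law, and cross-variation of size at most $\lambda_h$ for $h$ small enough that $\lambda_h < \epsilon$ under $\mu^\eta_E$), yet the compensator claims that cross-variation is $a$. So this regularised identity cannot be verified for fixed $\epsilon$; the manipulation that actually works, and is what the paper carries out with $\lambda_h$ in place of your $\epsilon$, is to show that the \emph{stopped} process $(Z^j_t Z^k_t - \int_{s_j\vee s_k}^t Z^k_r b(r,Z^j_r)+Z^j_r b(r,Z^k_r)\,dr)^{T^{jk}}$ is a martingale, with no $a$-term at all.

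Second, you do not establish coalescence of the limit process after $T^{jk}$, which is needed to recover the remaining half of the second local-martingale property. You invoke monotonicity of $F^\pm_{h,r}$ to argue the dichotomy is clean at the disturbance-flow level, but this property needs to be transferred to the limit. The paper does this with a Gronwall argument: noting that $Y_t = Z^j_t - Z^k_t$ does not change sign (a closed condition inherited from the order-preserving disturbances), it bounds $\EE|Y^S_{T^{jk}+t}| \le L\int_{T^{jk}}^{T^{jk}+t}\EE|Y^S_r|\,dr$ and concludes $Y \equiv 0$ after $T^{jk}$; combined with the single-coordinate martingale identities from Proposition \ref{singleconv} this yields the post-collision piece. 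Without this (or an equivalent) step, the identification of the subsequential limit as $\mu^{a,b}_E$ is incomplete, since verifying the second martingale requires knowing the joint quadratic variation of $Z^j$ and $Z^k$ on $\{t > T^{jk}\}$, which is exactly what coalescence provides.
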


\begin{proof}
	We write $X^k$ for $X^{e_k}$. The family of laws on $D_E$ is tight as each family of marginal laws on $D_{e_k}$ is tight. Let $\mu$ be a weak limit law for $\mu^\eta_E$, then for all $j,k$ and all $t>s\geq s_j\vee s_k$, letting $\EE^*(\cdot )=\EE(\cdot \mid t_{N_s+1}\leq t<t_{N_s+2},\FFF_s )$ we have,
	\begin{align}
	&\EE( X^j_tX^k_t-X^j_sX^k_s\mid \FFF_s) \\
	=&\frac{t-s}{h} \EE^*(F_{N_s+1}(X^j_s)F_{N_s+1}(X^k_s)-X^j_sX^k_s)+O\left(\frac{(t-s)^2}{h^2}\right) \\
	\begin{split}
	=&\frac{t-s}{h} \EE^*(\tilde{F}_{N_s+1}(X^j_s)X^k_s+\tilde{F}_{N_s+1}(X^k_s)X^j_s +\tilde{F}_{N_s+1}(X^k_s)\tilde{F}_{N_s+1}(X^j_s)) \\&+O\left(\frac{(t-s)^2}{h^2}\right)
	\end{split}\\
	\begin{split}
	=& \int_{s}^{t}b_h(r,X^j_s)dr X^k_s + \int_{s}^{t}b_h(r,X^k_s)dr X^j_s  \\&+\frac{(t-s)}{h}\EE^*(\tilde{F}_{N_s+1}(X^k_s)\tilde{F}_{N_s+1}(X^j_s)) +O\left(\frac{(t-s)^2}{h^2}\right)		\end{split}\\
	\begin{split}
	=& \EE\left(\int_{s}^{t} b(r,X^j_r)X^k_r+b(r,X^k_r)X^j_r dr\mid \FFF_s\right) + E_1 \\&+\frac{(t-s)}{h}\EE^*(\tilde{F}_{N_s+1}(X^k_s)\tilde{F}_{N_s+1}(X^j_s)) +O\left(\frac{(t-s)^2}{h^2}\right).
	\end{split}
	\end{align}
	Where we have (by the same method used to bound $E_3$ in Proposition \ref{singleconv}) that
	\begin{align}
	\abs{E_1} &= O\left((t-s)^2\right)
	\end{align}
	and provided $\abs{X^j_s-X^k_s}\geq \lambda_h$ (distance considered modulo one) we have
	\begin{equation}
	\left| \frac{(t-s)}{h}\EE^*(\tilde{F}_{N_s+1}(X^k_s)\tilde{F}_{N_s+1}(X^j_s)) \right| \leq (t-s)\lambda_h.
	\end{equation}
	So for $(t-s)^{\frac{1}{2}}\ll h \ll 1$ we have
	\begin{align}
	&\EE\left(X_t^jX_t^k-X_s^jX_s^k \big| \FFF_s, |X_s^j-X_s^k|	\geq \lambda_h \right)\\
	=&\EE\left(\int_{s}^{t} b(r,X^j_r)X^k_r+b(r,X^k_r)X^j_r dr\right)+o(t-s).
	\end{align}
	
	Hence, breaking $[s_j\vee s_k,\infty)$ into intervals of length $t-s$ and taking the limit as $t-s$ and $h$ go to $0$, gives that the process
	\begin{equation}
	X^j_tX^k_t-\int_{s_j \vee s_k}^t X^k_sb(s,X^j_s)+X^j_sb(s,X^k_s)ds,
        \end{equation}
        stopped at time $T^{jk}$ is a martingale. Further, this process must be continuous because Proposition \ref{singleconv} tells us that $X_t^j$ and $X_t^k$ are continuous.
	We know from Proposition \ref{singleconv} that, under $\mu$, both \mbox{$(X^k_t-\int_{s_k}^t b(X^k_s) ds)_{t\geq s_k}$} and
	\begin{equation}
	\left((X^k_t)^2-2\int_{s_k}^t X^k_sb(s,X^k_s)ds-\int_{s_k}^t a(s,X^k_s) ds \right)_{t\geq s_k}
	\end{equation}
	are continuous local martingales.
	
	It remains to show that $X_t^j-X_t^k$ is constant for $t\geq T^{jk}$ after which the result follows from Proposition \ref{countableweb}. Let \mbox{$Y_t=X^j_t-X^k_t$} and assume w.l.o.g that $Y_0>0$ and $Y_{T^{jk}}=0$. The process $Y$ inherits the property of not changing sign as our disturbances are order preserving. Given $R\in \RR$ and $\epsilon >0$ localize $Y$ using the stopping time \mbox{$S=\inf\{ t: Y_t>1 \textrm{ or } t>R \}$} and note that
	\begin{equation}
	\EE \abs{Y^S_{T^{jk}+t}} \leq \int_{T^{jk}}^{T^{jk}+t} \EE L\abs{Y^S_s} ds = L\int_{T^{jk}}^{T^{jk}+t} \EE \abs{Y^S_s} ds.
	\end{equation}
	Recall $L$ is the Lipschitz constant of $b$. So, by Gronwall's inequality, $\EE \abs{Y^S_{T^{jk}+t}}$ is identically $0$, up to time $t=R$. So $Y_t=0$ for all $t>T^{jk}$ a.s. and we are done.
\end{proof}

Let $E=(e_k:k\in \NN)$ be an enumeration of $\QQ^2$. Write $Z^{E,\pm}$ for the maps $D^\circ(\RR,\DDD) \rightarrow D_E$ given by $Z^{E,\pm}=(Z^{e_k,\pm}:k\in\NN)$. Write $Z^E=Z^{E,+}$. The following result is a criterion for weak convergence on $D^\circ(\RR,\DDD)$, and is Theorem 5.1 of \cite{NT}.

\begin{thm}
	\label{countableconvimpliesconv}
	Let $(\mu_n:n\in \NN)$ be a sequence of Borel probability measures on $D^\circ (\RR,\DDD)$, and let $\mu$ be a Borel probability measure on $C^\circ (\RR,\DDD)$. Assume that $Z^{E,-}=Z^{E,+}$ holds $\mu_n$-almost surely for all $n$ and $\mu$-almost surely. Assume further that $\mu_n \circ (Z^E)^{-1}\rightarrow \mu \circ (Z^E)^{-1}$ weakly on $D_E$. Then $\mu_n\rightarrow \mu$ weakly on $D^\circ (\RR,\DDD)$.
\end{thm}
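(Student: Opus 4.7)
The plan is to reduce weak convergence on $D^\circ(\RR,\DDD)$ to the given weak convergence on $D_E$ by exhibiting $\mu_n$ and $\mu$ as pushforwards of $\mu_n\circ(Z^E)^{-1}$ and $\mu\circ(Z^E)^{-1}$ under a reconstruction map that is measurable everywhere and continuous at the relevant set of full $\mu\circ(Z^E)^{-1}$-measure. First I would pass to a common probability space via Skorokhod representation on $D_E$, obtaining random variables $\zeta_n,\zeta$ with $\zeta_n\to\zeta$ almost surely in $d_E$ and $\zeta_n\sim\mu_n\circ(Z^E)^{-1}$, $\zeta\sim\mu\circ(Z^E)^{-1}$.

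Next I would build an explicit reconstruction $\Phi^E_D$ from an appropriate measurable subset of $D_E$ into $D^\circ(\RR,\DDD)$. For $(s,x)\in\QQ^2$ and rational $t>s$ the values $\phi^\pm_{(s,t]}(x)$ are encoded directly in $\zeta$. For general $I$ and $x$ one recovers $\phi^+_I(x)$ and $\phi^-_I(x)$ by taking infima/suprema of rational $\zeta^{e_k}_t$ over $e_k=(s,y)\in\QQ^2$ with $y$ approaching $x$ from the right (respectively left) and rational times approaching the endpoints of $I$ from outside (respectively inside), using non-decreasingness, the degree-one property, and the weak flow sandwich
\begin{equation}
\phi^-_{I_2}\circ\phi^-_{I_1}\leq\phi^-_I\leq\phi^+_I\leq\phi^+_{I_2}\circ\phi^+_{I_1}
\end{equation}
to pin down the value between the rational bounds. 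On the set where $Z^{E,-}=Z^{E,+}$ these two bounds coincide for every choice of rational approximations, so the reconstruction is single-valued and produces a flow in $C^\circ(\RR,\DDD)$; the composition $\Phi^E_D\circ Z^E$ is then the identity, so $\mu=\mu\circ(Z^E)^{-1}\circ(\Phi^E_D)^{-1}$ and similarly for each $\mu_n$.

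The main obstacle is showing that $\Phi^E_D$ is continuous in $d_D$ at almost every $\zeta$ under the limit law. The Skorokhod metric on $D_E$ only gives convergence of each coordinate path modulo a time-change, so one must show that near a point where $Z^{E,-}=Z^{E,+}$ the values $\phi^\pm_I(x)$ at arbitrary $(I,x)$ are approximated uniformly by values at rational $(s,y)$ and rational endpoints of $I$. This reduces to proving that, for any compact region in space-time, the oscillation $\phi^+_I(x)-\phi^-_I(x)$ of the reconstructed flow is controlled uniformly by the oscillations of finitely many rational coordinates together with a modulus that vanishes as the rational mesh is refined. The coordinate change $f^\times$ built into $d_\DDD$ is what makes this work, because it converts jumps of $\phi^\pm$ in $x$ into nearby jumps in the $t$-axis that can be absorbed into the time-change $\lambda$ of $d^{(n)}_D$.

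Once the continuity of $\Phi^E_D$ at $\mu\circ(Z^E)^{-1}$-a.e.\ point is established, the almost sure convergence $\zeta_n\to\zeta$ transfers to $\Phi^E_D(\zeta_n)\to\Phi^E_D(\zeta)$ almost surely in $d_D$, so $\mu_n=\mathrm{law}(\Phi^E_D(\zeta_n))\to\mathrm{law}(\Phi^E_D(\zeta))=\mu$ weakly on $D^\circ(\RR,\DDD)$ by the continuous mapping theorem. The hypothesis that $Z^{E,-}=Z^{E,+}$ also holds $\mu_n$-almost surely is used to guarantee that $\Phi^E_D\circ Z^E$ equals the identity on full $\mu_n$-measure so that the identification $\mu_n=\mathrm{law}(\Phi^E_D(\zeta_n))$ is valid.
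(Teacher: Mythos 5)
The paper itself does not prove Theorem \ref{countableconvimpliesconv}: it is quoted as Theorem 5.1 of \cite{NT} and used as a black box, so there is no in-paper proof to compare against. Evaluated on its own, your proposal identifies the correct general shape (Skorokhod representation on $D_E$, a reconstruction map $\Phi^E_D$, and the continuous mapping theorem), and that is almost certainly the strategy used in \cite{NT}. However, the step you rightly single out as ``the main obstacle'' --- showing that whenever $\zeta_n=Z^E(\phi_n)\to\zeta=Z^E(\phi)$ in $d_E$ with $\phi\in C^\circ(\RR,\DDD)$ one gets $\phi_n\to\phi$ in $d_D$ --- is not argued; it is only described. Saying that the coordinate change $f^\times$ ``converts jumps of $\phi^\pm$ in $x$ into nearby jumps in the $t$-axis that can be absorbed into the time-change $\lambda$'' is a heuristic, not a proof. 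What is actually needed is a concrete estimate showing that control of countably many rational-start paths, together with the weak-flow sandwich, monotonicity, and the degree-one property, yields a \emph{uniform} modulus in both arguments of $\phi_{ts}$ on compacts, and that this modulus is preserved under the $d_E$-limit; this is the entire substance of the theorem and cannot be left at the level of a gesture.

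Two further points deserve care. First, your assertion that the reconstruction ``produces a flow in $C^\circ(\RR,\DDD)$'' whenever $Z^{E,-}=Z^{E,+}$ holds is wrong for the $\mu_n$: that condition only controls spatial jumps at rational starting data, and $\Phi^E_D(\zeta_n)$ is in general only a cadlag weak flow in $D^\circ(\RR,\DDD)$; this is in fact why the conclusion must be phrased as convergence in $d_D$ rather than $d_C$, and why $\mu_n$ are allowed to live on $D^\circ$ while $\mu$ lives on $C^\circ$. Second, the continuous mapping theorem as usually stated requires the discontinuity set of the (everywhere-extended) map to be $\mu\circ(Z^E)^{-1}$-null, whereas what you actually have is continuity along sequences that stay in $Z^E(D^\circ(\RR,\DDD))$; the Skorokhod-representation route you chose does sidestep this, but you should say so explicitly, since naively citing continuous mapping would leave the extension behaviour unaddressed.
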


The following result generalizes Theorem 6.1 of \cite{NT} to the case of varying drift and diffusivity. It is immediate from Proposition \ref{countableconv} and Theorem \ref{countableconvimpliesconv}.
\begin{thm}
	\label{flowconv}
	Given a family of distributions $F$ along with $a,b$, Lipschitz in space measurable in time, obeying Equations \eqref{convbh}-\eqref{jumpbound} and with $\lambda_h \rightarrow 0$ then the convergence
	\begin{equation}
	\mu^F_A \rightarrow \mu^{a,b}_A \textrm{ weakly on } D^\circ(\RR,\DDD) \textrm{ as } h \rightarrow 0
        \end{equation}
        holds.
\end{thm}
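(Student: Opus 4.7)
The plan is to check that the hypotheses of Theorem \ref{countableconvimpliesconv} are satisfied with $\mu_n = \mu^{F_n}_A$ (for a sequence of families $F_n$ corresponding to $h_n \searrow 0$), $\mu = \mu^{a,b}_A$, and $E$ a fixed enumeration of $\QQ^2$; the stated convergence will then be read off directly. Three ingredients need to be verified, and each has already been supplied earlier in the paper.

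First, I would verify the almost-sure identifications $Z^{E,-}=Z^{E,+}$. Under each $\mu^{F_n}_A$, this follows from the standing hypothesis $\eta \in \DDD^*$ (condition \eqref{D*def}): as observed in Section \ref{DisturbanceFlows}, this hypothesis already forces $X^{e,-}_t=X^{e,+}_t$ almost surely for each fixed $e$ and all $t\geq s(e)$, and a countable union over $e\in\QQ^2$ then gives the equality of the coordinates $Z^{E,-}$ and $Z^{E,+}$ almost surely. Under $\mu^{a,b}_A$, the coordinate-wise equality is the final assertion of Theorem \ref{webexists}, and again a countable union delivers $Z^{E,-}=Z^{E,+}$ $\mu^{a,b}_A$-almost surely.

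Second, I would identify the image measures on $D_E$. By construction of the disturbance flow in Section \ref{DisturbanceFlows}, $\mu^{F_n}_A \circ (Z^E)^{-1}$ is precisely the joint law $\mu^{F_n}_E$ of the countable family $(X^{e_k}:k\in\NN)$. Similarly, the construction of $\mu^{a,b}_A$ in Theorem \ref{webexists} via $\mu^{a,b}_A = \mu^{a,b}_E \circ (\Phi^E)^{-1}$, together with the identity $Z^{e_k}\circ \Phi^E = Z^k$ on $C^{\circ,+}_E$, gives $\mu^{a,b}_A \circ (Z^E)^{-1} = \mu^{a,b}_E$.

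Third, having made these identifications, the required weak convergence $\mu^{F_n}_E \to \mu^{a,b}_E$ on $D_E$ is exactly the content of Proposition \ref{countableconv}, whose hypotheses (namely conditions \eqref{convbh}--\eqref{jumpbound} and $\lambda_h\to 0$, together with the regularity of $a$ and $b$) are assumed in the statement of Theorem \ref{flowconv}. An application of Theorem \ref{countableconvimpliesconv} then yields $\mu^{F_n}_A \to \mu^{a,b}_A$ weakly on $D^\circ(\RR,\DDD)$. There is no substantive obstacle: the hard analytic work lives in Proposition \ref{countableconv} (tightness and identification of subsequential limits of single trajectories) and in Theorem \ref{countableconvimpliesconv} (upgrading countable-path convergence to flow convergence), and the present statement only requires their assembly.
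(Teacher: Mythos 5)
Your proof is correct and follows exactly the route the paper intends: the paper states the result is immediate from Proposition \ref{countableconv} and Theorem \ref{countableconvimpliesconv}, and you have simply spelled out the verification of the three hypotheses of Theorem \ref{countableconvimpliesconv} that the paper leaves implicit.
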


\vspace{10pt}
\section{Proof of Theorem \ref{reversalidentify} using Disturbance Flows}
\label{DisturbanceProof}
\vspace{10pt}

In this section, we identify the time-reversal of a generic disturbance flow. We then apply this identification to an explicit sequence of flows and, as the limit of the reversals must be the reversal of the limit, we can deduce Theorem \ref{reversalidentify}.

The following proposition is a generalization of the first half of Proposition 7.1 of \cite{NT}, which can be recovered by assuming that $b_h\equiv 0$ and $a_h\equiv 1$.

\begin{prop}
	Set $G_{h,t}=F^{-1}_{h,-t}$.
	The time-reversal of a disturbance flow with disturbance $F_h$ is a disturbance flow with disturbance $G_h$, for all $h$. Thus $\hat\mu^{F_h}_A = \mu^{G_h}_A$, for all $h$.
\end{prop}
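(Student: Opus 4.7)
The plan is to show that the two flows in question are constructed from the same random ingredients, reshuffled in a way that the Poisson point process is invariant under. Concretely, I would express $\hat\Phi$ explicitly as a composition of inverses of the original disturbances, re-parametrize by the time-reversed Poisson process, and observe that the result is exactly the law of a disturbance flow with disturbance family $G_h$.

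First, fix a bounded interval $I = (s,t]$, let $\tau_1 < \tau_2 < \cdots < \tau_m$ be the Poisson points of $N$ in $-I = [-t,-s)$, and write $F_k = F_{h,\tau_k}$, so that $\Phi_{-I} = F_m \circ \cdots \circ F_1$ on the event that $-I$ contains $m$ atoms. The inverse operation on $\DDD$ distributes over composition (this is noted right after the definition of the inverse in Section~\ref{TimeReversal}), so almost surely
\begin{equation}
\hat\Phi_I \;=\; \Phi_{-I}^{-1} \;=\; F_1^{-1} \circ F_2^{-1} \circ \cdots \circ F_m^{-1}.
\end{equation}
Setting $\sigma_k = -\tau_{m+1-k}$, the $\sigma_k$ are the points $-\tau_j$ listed in increasing order inside $I$, and $F_{m+1-k}^{-1} = F_{h,-\sigma_k}^{-1} = G_{h,\sigma_k}$. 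Thus
\begin{equation}
\hat\Phi_I \;=\; G_{h,\sigma_m} \circ \cdots \circ G_{h,\sigma_1},
\end{equation}
which is exactly the form of a disturbance flow built from the family $G_h$, provided $(\sigma_1,\dots,\sigma_m)$ has the law of the Poisson atoms of a rate-$h^{-1}$ process in $I$ and the marks are independent with the prescribed marginals.

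Next I would verify these two distributional facts. The Poisson random measure $N$ on $\RR$ of intensity $h^{-1}$ is invariant in law under the map $t \mapsto -t$; this is immediate from the characterization of Poisson measures by their intensity. Combining this with the independence of the marks $\{F_{h,t}\}$ and the measurable dependence of $\eta_{h,t}$ on $t$, the reshuffled family $\{G_{h,\sigma} : \sigma \in \text{atoms of reversed } N\}$ is equal in law to $\{G_{h,t}\}$ indexed by the atoms of an independent rate-$h^{-1}$ Poisson process. I would also check that $G_h \in \DDD^*$: since $F_{h,t}^+ = F_{h,t}^-$ a.s.\ means the graph of $F_{h,t}$ has no vertical jumps, its inverse has no intervals of constancy a.s., which in turn gives $G_{h,t}^+ = G_{h,t}^-$ a.s., so compositions of $G$'s again land in $\DDD$.

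The main obstacle, and where I would spend the most care, is the bookkeeping at the level of $\{f^-,f^+\}$-pairs: the definition of $F^{-1}$ swaps the left- and right-continuous modifications, and one must confirm that the identity $(F_m \circ \cdots \circ F_1)^{-1} = F_1^{-1} \circ \cdots \circ F_m^{-1}$ really holds as elements of $\DDD$ (not just for one modification), using the distributivity of inversion over composition together with the $\DDD^*$ condition which rules out pathologies at jumps. A secondary point is that the disturbance flow has been defined on half-open intervals $(s,t]$, while time reversal naturally produces half-open intervals of the opposite orientation; one has to check that the convention is consistent on all bounded intervals $I$ (and at isolated points of time, where both sides give the identity a.s.\ since atoms of $N$ are isolated). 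Once these technicalities are in hand, the equality of the two families of finite-dimensional compositions and the distributional identification of the driving data together give $\hat\mu^{F_h}_A = \mu^{G_h}_A$.
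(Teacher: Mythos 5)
Your argument is essentially identical to the paper's: both express $\hat\Phi_I$ as a reversed composition of inverse disturbances at the Poisson times in $-I$, then invoke the reflection-invariance of the rate-$h^{-1}$ Poisson process to identify the result in law with a disturbance flow built from $G_h$. You are somewhat more explicit about the $\DDD^*$ condition and the $\{f^-,f^+\}$ bookkeeping, but these are fleshing-out of the same argument rather than a different route.
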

\begin{proof}
	The proof is very close to the second half of the proof of proposition 7.1 of \cite{NT}.
	
	Set $m$ and $n$ to be the minimal and maximal values taken by $N_t$ at jumps in $I$. Also, take $-\hat{n}$ and $-\hat{m}$ to be the minimal and maximal values taken by $N_t$ at jumps in $-I$. Then, we can define a disturbance flow $\Phi$ with disturbance $F_h$, by
	\begin{equation}
	\Phi^{\pm}_I=F^{\pm}_{h,t_n}\circ \dots \circ F^{\pm}_{h,t_{m}}.
	\end{equation}
	Then
	\begin{equation}
	\hat\Phi^{\pm}_I = G^{\pm}_{h,-t_{-\hat{n}}} \circ \dots \circ G^{\pm}_{h,-t_{-\hat{m}}}.
	\end{equation}
	By the properties of the Poisson process $(-t_{-\hat{m}},\dots ,-t_{-\hat{n}})$ is equal in distribution to $(t_{m},\dots ,t_{n})$, so $\hat\Phi$ is a disturbance flow with disturbance $G_h$.
\end{proof}

In \cite{NT}, it is then shown for $a\equiv 1$ and $b \equiv 0$ that $\mu_A$ is invariant under time-reversal. We generalize this result to Theorem \ref{reversalidentify}.

\begin{thm:reversal}
	If $a$ has spatial derivative $a'$ and $a$, $b$ and $a'$ are uniformly bounded on compacts in time and $L$-Lipschitz in space then
	\begin{equation}
	\hat\mu_A=\nu_A:=\mu^{a^\nu,b^\nu}_A
	\end{equation}
	where 	$ a^\nu(t,x)=a(-t,x)$, $b^\nu(t,x)=-b(-t,x)+a'(-t,x)/2$ and $\hat{\mu}_A$ is the time reversal of $\mu_A$.
\end{thm:reversal}
\begin{proof}
	The proof is based on the fact that given a family $(F_h)_{h>0}$ (satisfying the conditions of Proposition \ref{singleconv}) we have that: $ \mu^{F^{-1}_{h,-t}}_A=\hat\mu^{F_h}_A \rightarrow \hat\mu_A $. It thus suffices to show for some specific family $(F_h)_{h>0}$ that $ \mu^{F^{-1}_{h,-t}}_A \rightarrow \nu_A$. This is true by Theorem \ref{flowconv} if $(F^{-1}_{h,-t})_{h>0}$ satisfies the conditions that we put on $F$, but with $a(t,x)$ and $b(t,x)$ replaced by $a^\nu(-t,x)$ and $b^\nu(-t,x)$. Let $\hat{a}_h$, $\hat{a}$, $\hat{b}_h$ and $\hat{b}$ be defined from $F^{-1}$ as $a_h$ and $b_h$ are defined from $F$.

        For every fixed $h$, consider the sequence $t_n$, and let $\theta_{h,t_n}$ be i.i.d. uniform random variables on $[0,1]$. We will write $\theta$ as shorthand for $\theta_{h,t_n}$, and for the remainder of this proof $t$ will refer to an element of $\{t_n:n\in\NN\}$. Let
	\begin{equation}
	r_{\theta,t}=\frac{h^\frac{2}{3}}{2}\left(b\left(t,\theta-\frac{1}{2}\right)-a'\left(t,\theta-\frac{1}{2}\right)\right)
	\end{equation}
	and
	\begin{equation}
	w=\left( \frac{3a(t,\theta)h}{2} \right)^{\frac{1}{3}}.
	\end{equation}
	Then, for sufficiently small $h$, we consider the family of disturbances given by setting,
	\begin{equation}
	F_{h,t}(x)=\begin{cases}
	x+r_{\theta,t} & (x-\theta)\in (\frac{1}{2}-h^{\frac{1}{3}},\frac{1}{2}+h^{\frac{1}{3}}) \\
	\frac{1}{2}+h^\frac{1}{3}+r_{\theta,t}+\theta& (x-\theta)\in (\frac{1}{2}+h^\frac{1}{3},\frac{1}{2}+h^\frac{1}{3}+r_{\theta,t}) \\
	\frac{1}{2}-h^\frac{1}{3}+r_{\theta,t}+\theta& (x-\theta)\in (\frac{1}{2}-h^\frac{1}{3}+r_{\theta,t},\frac{1}{2}-h^\frac{1}{3}) \\
	\theta & (x-\theta) \in (-w,w) \\
	x & \textrm{otherwise}.
	\end{cases}
	\end{equation}
	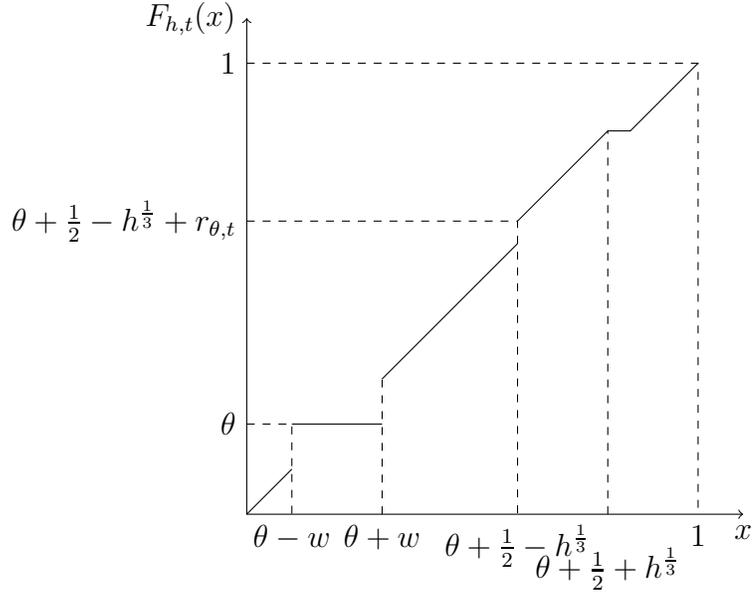
\begin{figure}
		\begin{tikzpicture}[scale=0.6]
		\draw [<->] (11,0) -- (0,0) -- (0,11);
		\node [left] at (0,11) {$F_{h,t}(x)$};
		\node [below] at (11,0) {$x$};
		\draw (0,0) -- (1,1) (1,2) -- (3,2) (3,3) -- (6,6) (6,6.5) -- (8,8.5) -- (8.5,8.5) -- (10,10);
		\draw [dashed] (0,10) -- (10,10) --(10,0);
		\node [left] at (0,10) {1};
		\node [below] at (10,0) {1};
		\draw [dashed] (0,2) -- (1,2) -- (1,0) (3,0) -- (3,3);
		\node [left] at (0,2) {$\theta$};
		\node [below] at (1,0) {$\theta-w$};
		\node [below] at (3,0) {$\theta+w$};
		\draw [dashed] (0,6.5) -- (6,6.5) -- (6,0) (8,0) -- (8,8.5);
		\node [left] at (0,6.5) {$\theta+\frac{1}{2}-h^\frac{1}{3}+r_{\theta,t}$};
		\node [below] at (6,0) {$\theta+\frac{1}{2}-h^\frac{1}{3}$};
		\node [below] at (8,-0.5) {$\theta+\frac{1}{2}+h^\frac{1}{3}$};
		\end{tikzpicture}
		\caption{\label{specificDisturbance}An example from the specific family of disturbances used in this proof.}
	\end{figure}
	Note that at least one of the intervals specified is empty, depending on the sign of $r_{\theta,t}$. An example from this family is graphed in Figure \ref{specificDisturbance}. 
	
	Note that $\lambda_h\rightarrow 0$ both for $F$, as originally defined, and with $F^{-1}$ substituted for $F$. Note that, the disturbance of size $r_{\theta,t}$ is negligible in computing $\lambda_h$ as it is $O(h^{\frac{2}{3}})$ in magnitude $O(h^{\frac{1}{3}})$ in width and always multiplied by something of size $O(h^{\frac{1}{3}})$ in the definition of $\lambda_h$.
	The first three cases in the above definition also contribute nothing to either $\lim_{h\rightarrow 0}a_h$ or $\lim_{h\rightarrow 0}\hat{a}_h$, and their contribution to $\lim_{h\rightarrow 0}b_h$ is exactly the negative of their contribution to $\lim_{h\rightarrow 0}\hat{b}_h$. So it suffices to prove that the proposition holds for the case $b=a'$, i.e. the case where $r_{\theta,t}\equiv 0$.

	We write $w_{\pm}$ for the largest offsets from $x$ a disturbance can have whilst not mapping $x$ to itself. For sufficiently small $h$ they are given by the implicit equation
	\begin{equation}
	w_\pm=\left( \frac{3a(t,x\pm w_\pm)h}{2} \right)^{\frac{1}{3}}.
	\end{equation}
	We expanding this by Taylor's theorem and by substituting the right hand side in for $w_\pm$ on the right hand side. Letting
	\begin{equation}
	c=c_h(x)=\left( \frac{3a(t,x)h}{2} \right)^{\frac{1}{3}}
	\end{equation}
	this expansion gives
	\begin{equation}
	w_\pm = c \pm \frac{a'(t,x)c^2}{3a(t,x)} + o(h^\frac{2}{3}).
	\end{equation}
	For the rest of this proof unless otherwise specified $a$, $a'$ and $b$ are assumed to be evaluated at $(t,x)$. Note that the $o(h^\frac{2}{3})$ term is small uniformly in $t$, this will allow use to conclude uniform convergence. All following uses of little $o$ notation in this proof have implied constants independent of $t$, in all cases this follows from the fact that $b$ and $a$ are bounded uniformly.

        We can now calculate
	\begin{align}
	a_h&=\frac{1}{h}\EE\left(\tilde{F}_{h,t}(x)^2\right) \\
	&=\frac{1}{h}\int_{-w_-}^{w_+}\alpha^2 d\alpha \\
	&=\frac{1}{3h}\left( w_+^3+w_-^3 \right) \\
        &=\frac{2c^3}{3h} +o(1) \\
        &=a+o(1) \\
	&\rightarrow a
	\end{align}
	and
	\begin{align}
	b_h&=\frac{1}{h}\EE\left(\tilde{F}_{h,t}(x)\right) \\
	&=\frac{1}{h}\int_{-w_-}^{w_+}\alpha d\alpha \\
        &=\frac{1}{2h}\left( w_+^2-w_-^2 \right) \\
        &=\frac{1}{2h}\left(\left(c^2+\frac{2a'c^3}{3a}+o(h)\right)-\left(c^2-\frac{2a'c^3}{3a}+o(h)\right)\right)\\
        &=\frac{2a'c^3}{3ha} + o(1) \\
        &=a'+o(1) \\  
	&\rightarrow a'.
	\end{align}
	By Taylor and binomial expansion we also get
	\begin{equation}
	c_h(x+\alpha)=c_h(x)\left( 1+\frac{a'\alpha}{3a} \right)+ o(h^\frac{1}{3}\alpha).
	\end{equation}
	Which allows us to calculate,
	\begin{align}
          \hat{a}_h(-t,x)&=\frac{1}{h}\int_{-w_-}^{0}(\alpha+c_h(x+\alpha))^2 d\alpha+\frac{1}{h}\int_{0}^{w_+}(\alpha-c_h(x+\alpha))^2 d\alpha \\
	&=\frac{1}{h}\int_{-c}^0 \left(\alpha^2+2\alpha c+c^2 \right)d\alpha +\frac{1}{h}\int_0^c \left(\alpha^2-2\alpha c+c^2\right) d\alpha + o(1) \\
	&=\frac{2}{h}\int_0^c \left(\alpha^2-2\alpha c +c^2 \right)d\alpha + o(1)\\
        &=\frac{2}{h}\left(\frac{c^3}{3}-c^3+c^3\right) + o(1) \\
        &=a+o(1) \\
	&\rightarrow a
        \end{align}
	and
	\begin{align}
	\hat{b}_h(-t,x)&=\frac{1}{h}\int_{-w_-}^{0}\left(\alpha+c_h(x+\alpha)\right) d\alpha+\frac{1}{h}\int_{0}^{w_+}\left(\alpha-c_h(x+\alpha)\right) d\alpha \\
	&=b_h+\frac{c}{h}\int_{-w_-}^{0} \left(1+\frac{a'\alpha}{3a}+o(\alpha)\right) d\alpha -\frac{c}{h}\int_{0}^{w_+} \left(1+\frac{a'\alpha}{3a}+o(\alpha)\right) d\alpha \\
	&=b_h+\frac{c}{h} \left( w_--w_+ -\frac{a'}{6a}(w_-^2+w_+^2) +o(h^\frac{2}{3}) \right) \\
	&=b_h+\frac{c}{h}\left( -\frac{2a'c^2}{3a}-\frac{a'c^2}{3a} \right) +o(1) \\
	&=b_h-a'-\frac{a'}{2} +o(1)\\
        &=-\frac{a'}{2}+o(1) \\
        &\rightarrow -\frac{a'}{2}.
	\end{align}
	So the result holds.
\end{proof}

The following corollary is similar to Corollary 7.3 of \cite{NT} (and with an almost identical proof) in that it gives weak convergence for paths running both forward and backward from a given sequence of points. First we define the notation for this result.

Given $e=(s,x)\in \RR^2$, define $\bar{D}_e=\{ \xi \in D(\RR,\RR) : \xi_s=x \}$ and for $E=(e_k:k\in \NN)$ set $\bar{D}_E=\prod_{k=1}^{\infty}\bar{D}_{e_k}$. For $\phi \in D^\circ(\RR,\DDD)$, define
\begin{equation}
\bar{Z}_t^{e,\pm}(\phi)=\begin{cases}
\phi_{(s,t]}^\pm (x), &t\geq s, \\
(\phi^{-1})_{(t,s]}^\pm (x), &t<s.
\end{cases}
\end{equation}
Then $\bar{Z}^{e,\pm}(\phi) \in \bar{D}_e$ and extends $Z^{e,\pm}(\phi)$, from $[s,\infty)$ to the whole of $\RR$. Let $\eta_h$ denote the law of $F_h$. For all $e\in \RR^2$, we have $\bar{Z}^{e,-}=\bar{Z}^{e,+}$ almost everywhere on $D^\circ (\RR,\DDD)$ wth respect to both $\mu^{a,b}_A$ and $\mu^{\eta_h}_A$, for all $h$. So we drop the $\pm$. Denote by $\bar{\mu}^{\eta_h}_E$ the law of $(\bar{Z}^{e_k}:k\in \NN)$ on $\bar{D}_E$ under $\mu^{\eta_h}_A$ and by $\bar{\mu}^{a,b}_E$ the corresponding law under $\mu^{a,b}_A$.
\begin{cor}
	\label{bidirectionalflow}
	$\bar{\mu}^{\eta_h}_E \rightarrow \bar{\mu}^{a,b}_E$ weakly on $\bar{D}_E$,
\end{cor}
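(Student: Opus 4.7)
The plan is to leverage Theorem \ref{flowconv} at the flow level together with Theorem \ref{reversalidentify} to handle the backward portion of each path. First, I record the decomposition: writing $\hat e_k = (-s_k, x_k)$, the bidirectional path $\bar Z^{e_k}(\phi)$ equals the forward trajectory $Z^{e_k}(\phi)$ on $[s_k, \infty)$ and the time-reflection $t \mapsto Z^{\hat e_k}(\hat\phi)_{-t}$ on $(-\infty, s_k]$. Hence the extraction map $\phi \mapsto (\bar Z^{e_k}(\phi))_{k \in \NN}$ factors through the pair $(Z^E(\phi), Z^{\hat E}(\hat\phi))$ via a deterministic gluing at each $s_k$, and it suffices to establish joint weak convergence of that pair.

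By Theorem \ref{flowconv}, $\mu^{F_h}_A \to \mu^{a,b}_A$ weakly on $D^\circ(\RR,\DDD)$. Since the time-reversal map is an isometry, hence continuous, the joint law of $(\phi, \hat\phi)$ converges weakly, and by Theorem \ref{reversalidentify} the second marginal of the limit is $\nu_A$. Using the Skorokhod representation theorem I couple so that $(\phi_n, \hat\phi_n) \to (\phi, \hat\phi)$ almost surely in $D^\circ(\RR, \DDD)^2$, with $\phi$ supported on $C^\circ(\RR, \DDD)$. Since the coordinate-evaluation maps $\phi \mapsto Z^{e_k}(\phi)$ and $\phi \mapsto Z^{\hat e_k}(\hat\phi)$ are continuous at continuous limit flows in the $d_D$ topology, the almost sure flow convergence yields almost sure convergences $Z^{e_k}(\phi_n) \to Z^{e_k}(\phi)$ in $D_{e_k}$ and $Z^{\hat e_k}(\hat\phi_n) \to Z^{\hat e_k}(\hat\phi)$ in $D_{\hat e_k}$ for every $k$. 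Gluing at each $s_k$ then gives $\bar Z^{e_k}(\phi_n) \to \bar Z^{e_k}(\phi)$ in $\bar D_{e_k}$, whence the desired weak convergence in $\bar D_E$.

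The main obstacle is verifying the continuity of the coordinate evaluations at points of $C^\circ(\RR, \DDD)$. The key mechanism is that $d_D$-convergence with a continuous limit forces the reparametrizations $\lambda_n$ in the definition of $d_D$ to approach the identity on compacts, so that the discrete jumps of the approximating disturbance flows collapse in the limit and coordinate trajectories converge in the Skorokhod topology on each $D_{e_k}$; for continuous limit paths this Skorokhod convergence reduces to uniform convergence on compact intervals. This argument is entirely analogous to the one-sided case treated via Corollary 7.3 of \cite{NT} and requires only the definitions and basic properties recalled in Section \ref{CoalescingDiffusiveFlows}.
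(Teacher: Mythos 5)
Your proposal takes a longer route than the paper by decomposing each bidirectional trajectory into a forward half and the time-reversed forward half of $\hat\phi$, then invoking Theorem~\ref{reversalidentify}. That detour is harmless but unnecessary: the paper works directly with the map $\phi \mapsto \bar{Z}^{(s,x),+}(\phi)$ and does not need the reversal theorem at all. More importantly, there is a genuine gap in the step you yourself flag as ``the main obstacle.''

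You attribute continuity of the coordinate-evaluation maps at points of $C^\circ(\RR,\DDD)$ to the reparametrizations $\lambda_n$ in $d_D$ tending to the identity when the limit is continuous. That handles the jumps of the approximating disturbance flows in the \emph{time} variable, but it does not address the jumps of the limit flow in the \emph{spatial} variable. For a fixed starting point $(s,x)$, the map $\phi \mapsto Z^{(s,x),+}(\phi)$ is \emph{not} continuous at every $\phi \in C^\circ(\RR,\DDD)$: if for some $t>s$ one has $\phi^-_{(s,t]}(x) < \phi^+_{(s,t]}(x)$, then flows arbitrarily $d_D$-close to $\phi$ may send $x$ near either endpoint of that gap, so the evaluation at the fixed point $x$ is genuinely discontinuous at such $\phi$, even though $\phi \in C^\circ(\RR,\DDD)$. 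This is a coalescing flow, so for $t>s$ the map $x\mapsto\phi_{(s,t]}(x)$ is a step function and such $x$ certainly exist. Ruling this out therefore requires an almost-sure property of the limit law, not a deterministic property of $C^\circ(\RR,\DDD)$. The paper's proof supplies exactly this ingredient: almost surely under $\mu^{a,b}_A$,
\begin{equation}
\bar{Z}^{(s,x\pm\delta),+}(\phi) \to \bar{Z}^{(s,x),+}(\phi)\quad\text{uniformly on }\RR\text{ as }\delta\to 0,
\end{equation}
i.e., the deterministic point $x$ is almost surely not a spatial discontinuity of the flow started at time $s$. Only this, combined with $\phi\in C^\circ(\RR,\DDD)$ almost surely, gives that $\bar{Z}^{(s,x),+}$ is continuous at $\phi$ almost surely, after which the continuous mapping theorem applied to the flow-level convergence from Theorem~\ref{flowconv} finishes the proof. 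Your argument never establishes nor invokes this almost-sure spatial continuity, so the continuity claim on which everything downstream rests (and hence the Skorokhod-coupling step and the gluing at each $s_k$) is not actually justified.
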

\begin{proof}
	Given $\phi$ with law $\mu^{a,b}_A$, we have that almost surely
	\begin{equation}
	\bar{Z}^{(s,x\pm \delta),+}(\phi) \rightarrow \bar{Z}^{(s,x),+}(\phi)
	\end{equation}
	uniformly on $\RR$ as $\delta \rightarrow 0$. We also have $\phi\in C^\circ(\RR,\DDD)$ almost surely and it follows that $\bar{Z}^{(s,x),+}$ is continuous at $\phi$ almost surely. Thus, the result holds as we already know the convergence holds component wise.
\end{proof}

\vspace{10pt}
\section{Proof of Theorem \ref{reversalidentify} without Disturbance Flows}
\label{DirectProof}
\vspace{10pt}

In this section we first prove a version of Theorem \ref{reversalidentify} with the extra hypothesis that $a$ and $b$ are Lipschitz in time. Then we use an approximation argument to show Theorem \ref{reversalidentify} in the general case.

\begin{thm}
	\label{reversalLipschitz}
	If $a$ has spatial derivative $a'$ and $a$, $b$ and $a'$ are Lipschitz in both time and space then
	\begin{equation}
	\hat\mu_A=\nu_A:=\mu^{a^\nu,b^\nu}_A
	\end{equation}
	where 	$ a^\nu(t,x)=a(-t,x)$, $b^\nu(t,x)=-b(-t,x)+a'(-t,x)/2$ and $\hat{\mu}_A$ is
	the time reversal of $\mu_A$.
\end{thm}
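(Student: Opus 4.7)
My plan is to verify the martingale characterization of $\nu_A$ under $\hat\mu_A$. By Theorem \ref{webexists}, $\nu_A$ is the unique Borel probability measure on $C^\circ(\RR,\DDD)$ under which, for every starting point, the processes \eqref{processone} and \eqref{processtwo}, with $(a,b)$ replaced by $(a^\nu,b^\nu)$, are continuous local martingales with respect to the natural filtration. So it suffices to verify these properties under $\hat\mu_A$. Working throughout under $\mu_A$ and using $\hat\phi_{(s,t]} = \phi^{-1}_{-t,-s}$, the problem reduces to showing that the backward paths $\bar Z^{(-s,x)}_{-t} = \phi^{-1}_{-s,-t}(x)$ behave, in reversed time, like a coalescing flow of diffusions with drift $b^\nu = -b + a'/2$ and diffusivity $a^\nu = a$.

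I would handle the single-point condition by time discretization. Fix $s < t_0$ and small $h > 0$; the flow property gives $Y_{s-h} = \phi^{-1}_{s,s-h}(Y_s)$, so the step $\xi := Y_{s-h} - Y_s$ is determined by inverting the one-step forward map $\phi_{s,s-h}$. Writing $\sigma = \sqrt{a}$, the forward trajectory from $x$ satisfies
\begin{equation}
\phi_{s,s-h}(x) = x + \int_{s-h}^{s} b(\tau, X_\tau^x) d\tau + \int_{s-h}^{s} \sigma(\tau, X_\tau^x) dB_\tau^{x},
\end{equation}
with $B^x$ a Brownian motion and, by the coalescing structure, $B^x \perp B^{x'}$ for $x \neq x'$ on the overwhelmingly likely event that the two forward paths do not collide in $[s-h, s]$. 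The defining equation $\phi_{s,s-h}(Y_s + \xi) = Y_s$ then reads, to leading order in $h$,
\begin{equation}
\xi + b(s, Y_s + \xi) h + \sigma(s, Y_s + \xi) Z \sqrt{h} = O(h^{3/2}),
\end{equation}
where $Z$ is a standard normal (the rescaled Brownian increment of the specific trajectory arriving at $Y_s$) that is independent of the backward-future $\mathcal{G}_s := \sigma(Y_r : r \geq s)$. One Picard iteration in $\xi$, using Lipschitz regularity of $b$, $\sigma$ and $\sigma'$ in both variables, yields
\begin{equation}
\xi = -\sigma(s, Y_s) Z \sqrt{h} + \bigl(-b(s, Y_s) + \sigma(s, Y_s)\sigma'(s, Y_s) Z^2 \bigr) h + r_h,
\end{equation}
with $r_h$ an $L^2$-remainder of size $o(h)$. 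Since $\sigma\sigma' = a'/2$, conditioning gives infinitesimal mean $(-b + a'/2)(s, Y_s) h + o(h)$ and variance $a(s, Y_s) h + o(h)$, matching the generator of the $(a^\nu,b^\nu)$-diffusion. Summing across an $O(1/h)$-step grid, controlling accumulated errors by Gronwall, and taking the continuous-time limit gives \eqref{processone} under $\hat\mu_A$.

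For the pair-martingale condition, two backward paths from distinct $(t_0,y_1),(t_0,y_2)$ descend through disjoint trajectory strands of the forward flow while they remain apart, so by the independence of $B^x, B^{x'}$ across distinct noninteracting $x$ the joint increments are independent and the single-point expansion applies separately to each, yielding the cross-free drift required by \eqref{processtwo}. Once the two backward paths first meet at a time $T^{ee'}$, the two endpoints $y_1, y_2$ lie in a common coalescence interval of the forward map $\phi_{-t, -t_0}$, and this property persists as $-t$ decreases further (the coalescence intervals only grow backward), forcing the backward paths to coincide at all later reversed times. Combined with the single-point infinitesimal identities, this is the two-point martingale condition with collision time $T^{ee'}$, and Theorem \ref{webexists} then forces $\hat\mu_A = \nu_A$.

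The main obstacle is making the one-Picard-iteration expansion uniform enough to survive summation over $O(1/h)$ steps and passage to the continuous-time limit; the source of subtlety is the ``noise at a random starting point'' $B^{Y_s + \xi}$, whose index depends on the solution and which is exactly where the Itô-like $\sigma\sigma'$ correction comes from. The full Lipschitz-in-both-variables hypothesis of Theorem \ref{reversalLipschitz} is precisely what is needed to freeze $b$, $a$, $a'$ over each $h$-window up to $O(h)$ error uniformly on compacts and to bound the quadratic remainder in the Picard expansion; a Gronwall argument then gives $o(1)$ accumulated error over any finite horizon, after which tightness and the martingale-problem characterization close the proof.
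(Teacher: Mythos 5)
The high-level strategy matches the paper's: reduce to checking the martingale characterization of Theorem \ref{webexists}, compute the conditional mean and variance of a one-step backward increment, and handle two-point motions via coalescence and approximate independence. The paper implements the one-point step by computing the CDF of $\hat\phi_{t+h,t}(y)$ through the relation $\PP(\hat\phi_{t+h,t}(y)<x)=\PP(\phi_{-t,-t-h}(x)>y)$, locally replacing $(a,b)$ by an explicitly solvable $(\tilde a,\tilde b)$, and differentiating $F_y(x)$. You instead propose a Picard expansion of the implicit equation $\phi_{s,s-h}(Y_s+\xi)=Y_s$ around a ``Brownian increment $Z$ of the trajectory arriving at $Y_s$.'' This is where the argument has a genuine gap.

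The quantity you call $Z$ has a \emph{random index} $Y_s+\xi$ that is itself determined by all the driving noises on $[s-h,s]$: you are selecting the one trajectory that lands on $Y_s$, and that selection is biased by the local expansion or contraction of the forward map. Conditionally on $Y_s=y$, the law of $Z$ is the law of $\bigl(y-\phi^{-1}_{s,s-h}(y)\bigr)/(\sigma(y)\sqrt h)$, obtained by differentiating the CDF $\PP(\phi_{s,s-h}(x)\ge y)$ in $x$. This density is $\phi_{\mathrm{N}}(z)\bigl(1+O(\sqrt h)\bigr)$, \emph{not} exactly standard normal, and a priori an $O(\sqrt h)$ correction in $\EE Z$ feeds into $\EE\xi$ at order $h$ --- exactly the order of the drift you need. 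That the corrections are in fact harmless ($\EE Z=O(h)$, $\EE Z^2=1+O(h)$) is true, but it requires precisely the Jacobian/CDF computation the paper carries out; it is not something you can assert. In the same vein, your expansion of the forward map omits the Milstein term $\sigma\sigma'(W_h^2-h)/2$, which is also order $h$. The correct expansion is $\xi=-\sigma Z\sqrt h+\bigl(-b+\sigma\sigma'\tfrac{Z^2+1}{2}\bigr)h+O(h^{3/2})$ rather than $\xi=-\sigma Z\sqrt h+(-b+\sigma\sigma' Z^2)h+r_h$; the two happen to have the same conditional mean under $\EE Z^2=1$, but the agreement is a cancellation between two uncontrolled errors rather than a consequence of the argument as written. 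Without a verification of the near-Gaussianity of $Z$ (equivalently, without passing through the CDF as the paper does), the Picard computation does not establish the claimed infinitesimal mean $(-b+a'/2)h$. The two-point independence-before-collision and persistence-of-coalescence steps are fine and match the paper.
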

\begin{proof}
	Let $\phi \sim \mu_A$. It suffices to show that the restriction of $\hat\phi$ to $E$ given by $Z^{E,+}(\hat\phi)$, which we shall call $\hat\phi_E$, has distribution $\nu_E$, for each countable set $E\subset \RR\times\RR$. The distribution $\nu_E$ is characterised by its restriction to two point motions by Theorem \ref{webexists}.
	
	Coalescence of two motions follows immediately from the definition of time-reversal. As does the continuity of a single motion.
	
	As $\phi_{ts}$ and $\phi_{su}$ are independent for $s\in (u,t)$, we have the Markov property. Thus, by Donsker's Invariance Principle, we can identify the two point motion from just the mean and covariance matrix of small increments.
	
	First, we consider each one point motion separately. We will proceed by relating the backward and forward flows, then, noting that increments of the forward process are small, we approximate $a$ and $b$ on an interval that the forward process almost surely won't leave in such a way as to make exact calculations possible. Then we check that the incurred error is small using that $a$ and $b$ are Lipschitz in time, and that the exact calculations give the required answer. Finally, we will show that the increments of each process are independent, conditional on an event of large probability, and so the covariances are small.
	
	We have the relation,
	\begin{equation}
	\PP\left(\hat{\phi}_{t+h,t}(y)<x\right)=\PP(\phi_{-t,-t-h}(x)>y)
	\end{equation}
	which we can use to determine the distribution of $\hat{\phi}_{t+h,t}(y)$ if we first understand the distributions of the variables $\phi_{-t,-t-h}(x)$.
	
	To study these variables, we first show that the forward paths are localised. Start by noting
	\begin{align}
          \PP\left(\sup_{0<\delta t<h}|\phi_{t+\delta t,t}(x)-x|>h^{\frac{1}{2}-\epsilon}\right) \leq& \PP\left(\sup_{0<\delta t<h}\phi_{t+\delta t,t}(x)-x>h^{\frac{1}{2}-\epsilon}\right) \\
          &+\PP\left(\inf_{0<\delta t<h}\phi_{t+\delta t,t}(x)-x<-h^{\frac{1}{2}-\epsilon}\right).
        \end{align}
        Each of these terms can be bounded in the same way. To bound the first term, consider the process $\phi_{t+\delta t,t}(x)-b^*\delta t$ parametrised by $\delta t$. This is a supermartingale with diffusivity bounded by $a^*$, and thus by the reflection principle
        \begin{align}
          \PP\left(\sup_{0<\delta t<h}\phi_{t+\delta t,t}(x)-x>h^{\frac{1}{2}-\epsilon}\right)&\leq 2\Phi\left(-\frac{h^{\frac{1}{2}-\epsilon}-b^*h}{2(a^*h)^{\frac{1}{2}}}\right).
        \end{align}
        Thus we can derive that,
        \begin{align}
          &\PP\left(\sup_{0<\delta t<h}|\phi_{t+\delta t,t}(x)-x|>h^{\frac{1}{2}-\epsilon}\right) \\
          \leq& 4\Phi\left(-\frac{h^{\frac{1}{2}-\epsilon}-b^*h}{2(a^*h)^\frac{1}{2}}\right) \\
	\leq& \exp\left(-C(a^*,b^*)h^{-2\epsilon}\right) \hspace{10mm} \textrm{for sufficiently small } h
	\end{align}
	where $C$ is positive and independent of $h$ and $t$.
	
	Now we approximate $a$ and $b$ by $\tilde{a}$ and $\tilde{b}$ which, on the interval $[y-2h^{\frac{1}{2}-\epsilon},y+2h^{\frac{1}{2}-\epsilon}]$, are given by,
	\begin{equation}
	\tilde{a}(s,x)=\frac{1}{4a}\left((x-y)a'+2a\right)^2
	\end{equation}
	and
	\begin{equation}
	\tilde{b}(s,x)=\frac{b}{2a}\left((x-y)a'+2a\right).
	\end{equation}
	Where we have written $a$ for $a(t,y)$, $a'$ for $a'(t,y)$ and $b$ for $b'(t,y)$. We then extend $\tilde{a}$ and $\tilde{b}$ to functions on the circle which are both $\tilde{L}$-Lipschitz continuous and $\tilde{L}$-Lipschitz differentiable, for some $\tilde{L}$. For sufficiently small values of $h$, this extension can and will be chosen so that $a_*/2\leq \tilde{a}\leq 2a^*$. Note that for all $s$, $a(t,y)=\tilde{a}(s,y)$, $a'(t,y)=\tilde{a}'(s,y)$ and $b(t,y)=\tilde{b}(s,y)$, this will turn out to make them sufficiently good approximations.
	
	We now approximate the diffusion process $\phi_{t+\delta t,t}(x)$ for each $x\in [y-h^{\frac{1}{2}-\epsilon},y+h^{\frac{1}{2}-\epsilon}]$ by a diffusion process $X_{\delta t}$ started from $x$, with drift $\tilde{b}$ and diffusivity $\tilde{a}$, but driven by the same Brownian motion $B_{\delta t}$ as $\phi_{t+\delta t,t}(x)$. Note that $\tilde{a}$ and $\tilde{b}$ are constant with respect to time. Let $G$ be the event,
	\begin{equation}
	\left\{\sup_{0<\delta t<h}|\phi_{t+\delta t,t}(x)-x|<h^{\frac{1}{2}-\epsilon}\right\}\cap \left\{\sup_{0<\delta t<h}|X_{\delta t}-x|<h^{\frac{1}{2}-\epsilon}\right\}
	\end{equation}
	and note the second event in this union has probability bounded like the first, so $\PP(G)=1-O\left( e^{-Ch^{-2\epsilon}}\right)$. Note also that on this event, $X$ and $\phi_{t+\delta t,t}(x)$ stay within the interval we explicitly defined $\tilde{a}$ and $\tilde{b}$ on.

	On this event the error in the approximation is given by,
	\begin{align}
	\Delta_{\delta t}&:=X_{\delta t}-\phi_{t+\delta t,t}(x)\\
	&=\int_0^{\delta t} \left(\tilde{b}(t,X_{u})-b(t+u,\phi_{t+u,t}(x))\right) du \\ &\hspace{5mm}+\int_0^{\delta t} \left(\sqrt{\tilde{a}(t,X_{u})}-\sqrt{a(t+u,\phi_{t+u,t}(x))}\right) dB_u.
	\end{align}
	We have that if $E_{h}:=\sup_{\delta t<h} |\Delta_{\delta t}|$ then,
	\begin{align}
	E_{h}&\leq\sup_{\delta t<h}\int_0^{\delta t} &&\big( |\tilde{b}(t,X_{u})-\tilde{b}(t,\phi_{t+u,t}(x))|
	+|\tilde{b}(t,\phi_{t+u,t}(x))-b(t,y)|\\ &&&+|b(t,y)-b(t,\phi_{t+u,t}(x))| \\ &&& +|b(t,\phi_{t+u,t}(x))-b(t+u,\phi_{t+u,t}(x))|\big) du \\
	&\hspace{5mm}+\biggl|\int_0^{\delta t} && \bigg(\sqrt{\tilde{a}(t,X_{u})}-\sqrt{\tilde{a}(t,\phi_{t+u,t}(x))} +\sqrt{\tilde{a}(t,\phi_{t+u,t}(x))}-\sqrt{a(t,y)} \\ &&& +\sqrt{a(t,y)}-\sqrt{a(t,\phi_{t+u,t}(x))} \\ &&&+\sqrt{a(t,\phi_{t+u,t}(x))}-\sqrt{a(t+u,\phi_{t+u,t}(x))}\bigg) dB_u\biggr|.
	\end{align}
        The first integrand is bounded, on $G$, by $2\tilde{L}h^{\frac{1}{2}-\epsilon}+2Lh^{\frac{1}{2}-\epsilon}+2\tilde{L}h^{\frac{1}{2}-\epsilon}+Lh \leq 4(L+\tilde{L})h^{\frac{1}{2}-\epsilon}$. The first integral is therefore bounded by $4\delta t(L+\tilde{L})h^{\frac{1}{2}-\epsilon}$. To bound the second integrand we first observe that the square root function is Lipschitz with some constant $L_s$ on the interval $[a_*/2,2a^*]$. Secondly, note that we can achieve a stronger bound than for the first integrand as $a'$ is Lipschitz in $x$. The second integrand is thus bounded, on $G$, by $d_1:=2\tilde{L}L_sh^{\frac{1}{2}-\epsilon} +2LL_sh^{1-2\epsilon}+2\tilde{L}L_sh^{1-2\epsilon} +LL_sh$. The second integral is therefore a continuous martingale with diffusivity bounded by $d_1$. It can therefore be written as a time change of a standard Brownian motion, such that $h$ can correspond to a time no later than $hd_1$. Equivalently, there exists a Brownian motion $B'$ such that, on the event $G$, $E_h\leq \Delta'_{h}$ where
	\begin{align}
	\Delta'_{h}&=4h(L+\tilde{L})h^{\frac{1}{2}-\epsilon}+\sup_{\delta t<h}\left| \int_0^{\delta t} d_1 dB'_u\right|.
	\end{align}
        Consider the event $G'=\{\Delta'_{h}<h^{1-2\epsilon}\}$. The probability of this event is $1-O(e^{-Ch^{-2\epsilon}})$. This isn't quite a strong enough bound due to the $h^{\frac{1}{2}-\epsilon}$ term in $d_1$. However, as that term is proportional to the bound we have on $X_u-\phi_{t+u,t}(x)$ and this result provides a stronger bound, we can bootstrap this argument. To that effect note that, on $G\cap G'$, the second integrand is bounded by $2\tilde{L}L_sh^{1-2\epsilon}+2(L+\tilde{L})L_sh^{1-2\epsilon}+LL_sh \leq 4(L+\tilde{L})L_sh^{1-2\epsilon}$. Thus, as before, there exists a Brownian motion $B''$ such that, on the event $G\cap G'$, $E_{h}\leq \Delta''_h$ where
	\begin{align}
	\Delta''_{h}&=4h(L+\tilde{L})h^{\frac{1}{2}-\epsilon}+\sup_{\delta t<h}\left| \int_0^{\delta t} 4(L+\tilde{L})L_sh^{1-2\epsilon} dB''_u\right|.
	\end{align}
	Finally consider $G''=\{\Delta''_h<h^{\frac{3}{2}-3\epsilon}\}$ and note that the probability of this event, conditioned on $G\cap G'$ is $1-O(e^{-Ch^{-2\epsilon}})$.
	Thus we can conclude that
	\begin{align}
	\label{Deltabound}
          \PP(|\Delta_{h}|>h^{\frac{3}{2}-3\epsilon})&\leq \PP(E_h> \Delta''_h \textrm{ or } \Delta''_h\geq h^{\frac{3}{2}-3\epsilon}) \\
                                                     &\leq 1-\PP(G\cap G'\cap G'') \\
                                                     &=O\left(e^{-Ch^{-2\epsilon}}\right).
	\end{align}
	This result suffices to control the error of the approximation.
	
	Next, we calculate the distribution of $X_h$. Note that on the event $G$, we have, for some Brownian motion $W$, that
	\begin{equation}
	dX_t=\frac{ba'}{2a}\left(X_t-y+\frac{2a}{a'}\right)dt+\frac{a'}{2\sqrt{a}}\left(X_t-y+\frac{2a}{a'}\right)dW_t.
	\end{equation}
	Where we are again writing $a$ for $a(t,y)$, $a'$ for $a'(t,y)$ and $b$ for $b'(t,y)$.

        We consider three separate possibilties here; $a'>0$, $a'=0$, or $a'<0$. The $a'<0$ case we omit as it follows from the $a'>0$ case by symmetry. If $a'=0$ then define $f(x)=x/\sqrt{a}$ otherwise define $f(x)=\frac{2\sqrt{a}}{a'}\log(x-y+\frac{2a}{a'})$. Note that as $a'\leq L$ and $a>a_*$ the logarithm is well defined, so long as $h$ is sufficiently small. Either way an application of It\={o}'s lemma gives that
	\begin{equation}
	df(X_t)=\frac{1}{\sqrt{a}}\left(b-\frac{a'}{4}\right)dt+dW_t.
	\end{equation}
	The choices for $\tilde{a}$, $\tilde{b}$ and $f$ were made so that this equation has constant coefficients. Thus $f(X_h)$ is normally distributed with mean $f(x)+\frac{h}{\sqrt{a}}\left(b-\frac{a'}{4}\right)$ and variance $h$. So we can calculate that $\PP(X_h>y)=\PP(f(X_h)>f(y))=F_y(x)+O(e^{-Ch^{-2\epsilon}})$, where
	\begin{align}
	F_y(x)&:=\Phi\left(\frac{f(x)-f(y)}{\sqrt{h}}+\sqrt{\frac{h}{a}}\left(b-\frac{a'}{4}\right)\right) \\
	&=\Phi\left(\frac{2\sqrt{a}}{a'\sqrt{h}}\log\left(1+\frac{a'}{2a}(x-y)\right)+\sqrt{\frac{h}{a}}\left( b-\frac{a'}{4}\right)\right).
	\end{align}
	This implies that
	\begin{equation}
	\PP(|X_h-x|>h^{\frac{1}{2}-\epsilon})=O(e^{-Ch^{-2\epsilon}}).
	\end{equation}
	We can then calculate for $y=0$ and $|x|<h^{\frac{1}{2}-\epsilon}$ that
	\begin{align}
	F'_0(x)&=\left(\frac{1}{\sqrt{2ah\pi}}\frac{1}{1+\frac{a'x}{2a}}\right) \\
	&\hspace{5mm}\times\exp\left(-\frac{1}{2}\left(\frac{2\sqrt{a}}{a'\sqrt{h}}\log\left(1+\frac{a'x}{2a}\right)+\sqrt{\frac{h}{a}}\left( b-\frac{a'}{4}\right)\right)^2\right) \\
	&=\frac{1}{\sqrt{2ah\pi}}\left(1-\frac{a'x }{2a}+O\left(h^{1-2\epsilon}\right)\right)e^{-\frac{x^2}{2ah}} \\
	&\hspace{5mm}\times\exp\left(-\frac{x}{\sqrt{ah}}\left(-\frac{a'x^2}{4\sqrt{a^3h}}+\sqrt{\frac{h}{a}}\left(b-\frac{a'}{4}\right)  \right)+O\left(h^{1-4\epsilon}\right)\right) \\
	&=\frac{1}{\sqrt{2ah\pi}}\left(1-\frac{a' x}{2a}+O\left(h^{1-2\epsilon}\right)\right)e^{-\frac{x^2}{2ah}} \\
	&\hspace{5mm}\times\left(1-\frac{x}{\sqrt{ah}}\left(-\frac{a'x^2}{4\sqrt{a^3h}}+\sqrt{\frac{h}{a}}\left(b-\frac{a'}{4}\right)  \right)+O\left(h^{1-6\epsilon}\right)\right) \\
	&=\frac{e^{-\frac{x^2}{2ah}}}{\sqrt{2ah\pi}}\left(1-\frac{x}{a}\left(b-\frac{a'}{4}+\frac{a'}{2}\right)+\frac{a'x^3}{4a^2h}+O(h^{1-6\epsilon})\right).
	\end{align}
	This is related to $\hat{\phi}$ by
	\begin{align}
		\PP\left(\hat{\phi}_{t+h,t}(y)<x\right)
		&=\PP(\phi_{-t,-t-h}(x)>y) \\
		&= F_{y+O\left(h^{\frac{3}{2}-3\epsilon}\right)}(x)+O\left(e^{-Ch^{-2\epsilon}}\right) \\
		&=F_y(x)+O\left(h^{\frac{3}{2}-3\epsilon}\sup_{y\in \RR} \frac{dF_y(x)}{dy} \right)+O\left(e^{-Ch^{-2\epsilon}}\right)
	\end{align}
	and on $\left[ y-h^{\frac{1}{2}-\epsilon},y+h^{\frac{1}{2}-\epsilon}\right]$ this is equal to
	\begin{align}
	F_y(x)+O(h^{1-3\epsilon}).
	\end{align}
	We use this to compute,
	\begin{align}
	&\EE(\hat{\phi}_{t+h,t}(y)) \\
	=&		y+\int_y^\infty \left(1-\PP\left(\hat{\phi}_{t+h,t}(y)<x\right)\right)dx
	-\int_{-\infty}^{y} \PP\left(\hat{\phi}_{t+h,t}(y)<x\right) dx \\
	=&	y+\int_y^{y+h^{\frac{1}{2}-\epsilon}} \left(1-\PP\left(\hat{\phi}_{t+h,t}(y)<x\right)\right)dx \\
          &-\int_{y-h^{\frac{1}{2}-\epsilon}}^{y} \PP\left(\hat{\phi}_{t+h,t}(y)<x\right) dx +O\left(e^{-Ch^{-2\epsilon}}\right) \\
	=&	y+\int_y^{y+h^{\frac{1}{2}-\epsilon}} \left(1-F_y(x)\right)dx
           -\int_{y-h^{\frac{1}{2}-\epsilon}}^{y} F_y(x) dx +O\left(h^{\frac{3}{2}-4\epsilon}\right) \\
          =&	y+h^{\frac{1}{2}-\epsilon}-\int_{y-h^{\frac{1}{2}-\epsilon}}^{y+h^{\frac{1}{2}-\epsilon}} F_y(x) dx +O\left(h^{\frac{3}{2}-4\epsilon}\right) \\
          =&	y+h^{\frac{1}{2}-\epsilon}-\int_{-h^{\frac{1}{2}-\epsilon}}^{h^{\frac{1}{2}-\epsilon}} F_0(x) dx +O\left(h^{\frac{3}{2}-4\epsilon}\right).
        \end{align}
        Integrating by parts we can get an expression in terms of $F_0'$ rather than $F_0$. That is
        \begin{align}
          &\EE(\hat{\phi}_{t+h,t}(y)) \\
          =&	y+h^{\frac{1}{2}-\epsilon}-h^{\frac{1}{2}-\epsilon}\left(F_0(h^{\frac{1}{2}-\epsilon})-F_0(-h^{\frac{1}{2}-\epsilon})\right)+\int_{-h^{\frac{1}{2}-\epsilon}}^{h^{\frac{1}{2}-\epsilon}} xF'_0(x)dx+O\left(h^{\frac{3}{2}-4\epsilon}\right) \\
          =&	y+\int_{-h^{\frac{1}{2}-\epsilon}}^{h^{\frac{1}{2}-\epsilon}} xF'_0(x)dx+O\left(h^{\frac{3}{2}-4\epsilon}\right).
        \end{align}
        Substituting the approximation of $F_0'(x)$ into that integral and integrating gives
        \begin{align}
          &\EE(\hat{\phi}_{t+h,t}(y)) \\
          =&y+\int_{-h^{\frac{1}{2}-\epsilon}}^{h^{\frac{1}{2}-\epsilon}} \left(x-\frac{x^2}{a}\left(b+\frac{a'}{4}\right)+\frac{a'x^4}{4a^2h}\right)\frac{e^{-\frac{x^2}{2ah}}}{\sqrt{2ah\pi}}dx+O\left(h^{\frac{3}{2}-7\epsilon}\right) \\
          =&y+\int_{-\infty}^{\infty} \left(x-\frac{x^2}{a}\left(b+\frac{a'}{4}\right)+\frac{a'x^4}{4a^2h}\right)\frac{e^{-\frac{x^2}{2ah}}}{\sqrt{2ah\pi}}dx+O\left(h^{\frac{3}{2}-7\epsilon}\right) \\
          =&y-h\left(b+\frac{a'}{4}\right)+\frac{3ha'}{4}+O\left(h^{\frac{3}{2}-7\epsilon}\right) \\
          =&y+h\left(-b+\frac{a'}{2}\right)+O(h^{\frac{3}{2}-7\epsilon}).
        \end{align}
	Similarly, we find that
	\begin{equation}
	\Var\left(\hat{\phi}_{t+h,t}(y)\right)=ah+O(h^{2-8\epsilon}) .
	\end{equation}

	Thus, the single point motions are diffusion processes with the required drift and diffusivity.
	
	Next, we will show that the motions started from $y_1$ and $y_2$ have zero covariation until they coalesce, and thus, are independent until they coalesce. This follows immediately from the fact that for $y_1 \neq y_2$
	\begin{equation}
	\mathrm{Cov}\left(\hat{\phi}_{t+h,t}(y_1),\hat{\phi}_{t+h,t}(y_2)\right)=o(h).
	\end{equation}
	To establish this fact consider the events
	\begin{equation}
	\mathcal{A}_i=\left\lbrace \sup_{0<\delta t < h}\left|\hat{\phi}_{t+\delta t,t}(y_i)-y_i\right|<\frac{|y_2-y_1|}{2} \right\rbrace \hspace{10mm} \textrm{for } i=1,2.
	\end{equation}
	On the intersection of these events, we know that the $\hat{\phi}_{t+h,t}(y_i)$ are independent as the forward flows on $[-t-h,t]\times \left[y_i-\frac{|y_2-y_1|}{2},y_i+\frac{|y_2-y_1|}{2}\right]$ are independent, and each determines the corresponding $\mathcal{A}_i$ and $\hat{\phi}_{t+h,t}(y_i)$. Thus, writing $B$ for the complement of $\mathcal{A}_1 \cap \mathcal{A}_2$,
	\begin{align}
	\left|\mathrm{Cov}\left(\hat{\phi}_{t+h,t}(y_1),\hat{\phi}_{t+h,t}(y_2)\right)\right|&=\left|\mathrm{Cov}\left(\mathbb{1}_B\hat{\phi}_{t+h,t}(y_1),\mathbb{1}_B\hat{\phi}_{t+h,t}(y_2)\right)\right| \\
                                                                                             &\leq \sqrt{\Var\left(\mathbb{1}_B\hat{\phi}_{t+h,t}(y_2)\right)\Var\left(\mathbb{1}_B\hat{\phi}_{t+h,t}(y_1)\right)}.
        \end{align}
        As a geometric mean of positive values can't be larger than the largest value this is bounded by
        \begin{align}
	& \max_{i=1,2} \left\lbrace \Var\left(\mathbb{1}_B\hat{\phi}_{t+h,t}(y_i)\right)\right\rbrace \\
	\leq& \max_{i=1,2} \left\lbrace \EE\left(\mathbb{1}_B\left(\hat{\phi}_{t+h,t}(y_i)-y_i\right)^2\right)\right\rbrace
	\end{align}
	but, as we know that
	\begin{equation}
	\PP\left(\left|\hat{\phi}_{t+h,t}(y_i)-y_i\right|>x\right)\leq 2\left(1-\Phi\left( \frac{x-b^*h}{\sqrt{a^*h}} \right)\right)
	\end{equation}
	we can deduce that
	\begin{align}
	\left|\mathrm{Cov}\left(\hat{\phi}_{t+h,t}(y_1),\hat{\phi}_{t+h,t}(y_2)\right)\right|
          & \leq \int_{\frac{|y_2-y_1|}{2}}^\infty x^2 \frac{2}{\sqrt{a^*h}}\Phi'\left( \frac{x-b^*h}{\sqrt{a^*h}} \right) dx\\
          & = \int_{\frac{|y_2-y_1|-2b^*h}{2\sqrt{a^*h}}}^\infty x^22\Phi'(u) du\\
          &=\sqrt{\frac{2}{a^*h\pi}}\int_{\frac{|y_2-y_1|-2b^*h}{2\sqrt{a^*h}}}^\infty a^*h(u+b^*h)^2 e^{-\frac{u^2}{2}} du \\
          &=\sqrt{\frac{2a^*h}{\pi}}(1+O(h))\int_{\frac{|y_2-y_1|-2b^*h}{2\sqrt{a^*h}}}^\infty u^2 e^{-\frac{u^2}{2}} du.
        \end{align}
        For sufficiently small $h$ this is bounded by
        \begin{align}
          &\sqrt{\frac{3a^*h}{\pi}}\int_{\frac{|y_2-y_1|}{3\sqrt{a^*h}}}^\infty u^2 e^{-\frac{u^2}{2}} du \\
          =&\sqrt{\frac{3a^*h}{\pi}}\left(\frac{|y_2-y_1|}{3\sqrt{a^*h}}e^{-\frac{|y_2-y_1|^2}{18a^*h}} +\int_{\frac{|y_2-y_1|}{3\sqrt{a^*h}}}^\infty e^{-\frac{u^2}{2}} du\right) \\
          =&O\left(e^{-\frac{|y_2-y_1|^2}{18a^*h}}\right).
	\end{align}
	This establishes the result.
\end{proof}

Finally, we relax the restriction that $a$ and $b$ are Lipschitz in time.

\begin{thm:reversal}
	If $a$ has spatial derivative $a'$ and $a$, $b$ and $a'$ are uniformly bounded on compacts in time and $L$-Lipschitz in space then
	\begin{equation}
	\hat\mu_A=\nu_A:=\mu^{a^\nu,b^\nu}_A
	\end{equation}
	where 	$ a^\nu(t,x)=a(-t,x)$, $b^\nu(t,x)=-b(-t,x)+a'(-t,x)/2$ and $\hat{\mu}_A$ is
	the time reversal of $\mu_A$.
\end{thm:reversal}
\begin{proof}
	 Define approximations $a_n$ and $b_n$ by
	\begin{equation}
	a_n=a\ast K_n \textrm{ and } b_n=b\ast K_n
	\end{equation}
	where $\ast$ denotes convolution in time,
	\begin{equation}
	K_n(t)=nK(tn)
	\end{equation}
	and $K$ is a smooth, non-negative function supported on $[-1,1]$ with supremum and integral equal to one. The resulting $a_n$ and $b_n$ are smooth. Thus, we will be able to apply Theorem \ref{reversalLipschitz} to a flow with these parameters.
	
	Let $\phi^n \in C^\circ (\RR,\DDD)$ be the coalescing diffusive flow driven by $a_n$ and $b_n$ and let
	\begin{equation}
	b^*_k=\sup_{[-k-1,k+1]\times [0,1]} \lvert b(t,x) \rvert
	\end{equation}
	and
	\begin{equation}
	a^*_k=\sup_{[-k-1,k+1]\times [0,1]} a(t,x).
	\end{equation}
	We define $A_N$ to be the subset of $\phi \in C^\circ (\RR,\DDD)$ such that for all $k$ both
	\begin{align}
	\left| \phi_{ts}(x)-x \right|\leq 4b^*_kk+kN\sqrt{8a^*_k}+1\hspace{5mm} \forall x\in [0,1] \hspace{5mm} \forall s,t \in [-k,k] \text{ with } s<t
	\end{align}
	and
	\begin{align}
	\left| \phi_{ts}(x)-x \right|\leq \frac{1}{k}\hspace{5mm} \forall x\in [0,1] \hspace{5mm} \forall s,t \in [-k,k] \text{ with } t-s \in [0,\delta_{k,N}]
	\end{align}
	where
	\begin{equation}
	\delta_{k,N}=\min \left\lbrace \frac{1}{18k^3 N a^*_k (1+a^*_k+b^*_k)}, \frac{a^*_k}{2{b^*_k}^2} \right\rbrace	.
	\end{equation}
	
	In Proposition \ref{ANcompact}, we prove that $A_N$ is compact; and in Proposition \ref{ANhighprob}, we prove that $\phi^n \in A_N$ with high probability in $N$ uniformly in $n$. Thus, we can deduce that the $\phi^n$ are tight. Let $\phi$ be a weak sub-sequential limit of $\phi^n$. We will show that $\phi\sim\mu_A$ and that $\hat{\phi}\sim\nu_A$, which establishes the theorem.
	
	We present here only the proof that $\phi\sim\mu_A$. The proof that $\lim_{n\rightarrow \infty}\hat\phi^n\sim\nu_A$ is identical, but considering $\hat{\phi}^n$ and $-b+\frac{a'}{2}$ instead of $\phi^n$ and $b$, it then follows that $\hat\phi\sim\nu_A$ as time reversal is an isometry.
	By Theorem \ref{webexists} it suffices to show that
	\begin{equation}
	\EE\left( \phi_{ts}(x)-\int_{s}^t b(r,\phi_{rs}(x)) dr\middle| \FFF_s \right)=x \hspace{5mm} \forall x\in [0,1] \hspace{5mm} \forall s<t
	\end{equation}
	and
	\begin{equation}
	\EE \left( M_{st}(x_1,x_2,b,a,\phi)\mid \FFF_s \right)=x_1 x_2
	\hspace{5mm} \forall x_1,x_2\in [0,1] \hspace{5mm} \forall s<t
	\end{equation}
	where
	\begin{align}
	M_{st}(x_1,x_2,b,a,\phi)=&\phi_{ts}(x_1)\phi_{ts}(x_2)-\int_{s}^t \big(\phi_{rs}(x_1)b(r,\phi_{rs}(x_2))\\&+\phi_{rs}(x_2)b(r,\phi_{rs}(x_1))\big)dr-\int_{T^{(s,x_1)(s,x_2)} \wedge t}^t a(r,\phi_{rs}(x_1)) dr.
	\end{align}
	The proof of these two statements are very similar, so we will only provide the more complicated second one here. Furthermore as $M_{st}(x_1,x_2,b,a,\phi)$ is independent of $\FFF_s$ it suffices to show that
        \begin{equation}
	\EE \left( M_{st}(x_1,x_2,b,a,\phi) \right)=x_1 x_2
	\hspace{5mm} \forall x_1,x_2\in [0,1] \hspace{5mm} \forall s<t.
	\end{equation}
	
	Proposition \ref{EMcontinuousinx} says that $\EE \left( M_{st}(x_1,x_2,b,a,\phi) \right)$ is a continuous function of $x_1$ and $x_2$. Thus, it suffices to show that for any pair of intervals $I_1$ and $I_2$,
	\begin{align}
	\EE_x\EE_\phi \left( M_{st}(x_1,x_2,b,a,\phi) \right)=\EE_x(x_1x_2)
	\end{align}
	where $\EE_x$ averages over values of $x_1$ and $x_2$ in $I_1$ and $I_2$ respectively, and $\EE_\phi$ is the same as $\EE$ on previous lines. Proposition \ref{EMcontinuousinphi} says
	\begin{equation}
	\EE_\phi\EE_x(M_{st}(x_1,x_2,b,a,\phi))=\lim_n \EE_{\phi^n}\EE_x\left( M_{st}(x_1,x_2,b,a,\phi^n) \right)
	\end{equation}
	which is used in the calculation below.
	Writing $D_n$ for $M_{st}(x_1,x_2,b,a,\phi^n)-M_{st}(x_1,x_2,b_n,a_n,\phi^n)$ we can calculate, using Proposition \ref{reversalLipschitz} in the fourth equality, that
	
	\begin{align}
	&\EE_x\EE_\phi(M_{st}(x_1,x_2,b,a,\phi)) \\
	=&\EE_\phi\EE_x(M_{st}(x_1,x_2,b,a,\phi)) \\
	=&\lim_n \EE_{\phi^n}\EE_x\left( M_{st}(x_1,x_2,b,a,\phi^n) \right)\\
	=&\lim_n \EE_{\phi^n}\EE_x\left( M_{st}(x_1,x_2,b_n,a_n,\phi^n) \right)+\lim_n \EE_{\phi^n}\EE_x\left( D_n \right) \\
	=&\EE_x\left(x_1 x_2 \right)+\lim_n \EE_{x}\EE_{\phi^n}\left( D_n \right). \\
	\end{align}
	It remains only to show that $\EE_{\phi^n}(D_n)$ goes to $0$ uniformly in $x$ as $n\rightarrow \infty$.
	
	\begin{align}
	D_n=&\int_{s}^{t}\phi^n_{rs}(x_1) (b_n(r,\phi^n_{rs}(x_2))-b(r,\phi^n_{rs}(x_2))) dr \\
	&+\int_{s}^{t}\phi^n_{rs}(x_2) (b_n(r,\phi^n_{rs}(x_1))-b(r,\phi^n_{rs}(x_1))) dr \\
	&+\int_{T^{(s,x_1)(s,x_2)}}^{t} \left(a_n(r,\phi^n_{rs}(x_1))-a(r,\phi^n_{rs}(x_1))\right) dr
	\end{align}
	
	Each of these terms has expectation tending to $0$. We will prove this for the first term (the second term is very similar and the third term is even simpler, so the same argument works). We firstly rearrange each half of the first term separately. We assume here for simplicity that $t-s>2/n$, obviously this is fine for all sufficiently large $n$.
	
	\begin{align}
	&\int_{s}^{t}\phi^n_{rs}(x_1) b_n(r,\phi^n_{rs}(x_2))dr \\
	=&\int_{s}^{t}\int_{-\frac{1}{n}}^{\frac{1}{n}}\phi^n_{rs}(x_1) b(r+u,\phi^n_{rs}(x_2))K_n(u) du dr \\
	=&\int_{s-\frac{1}{n}}^{t+\frac{1}{n}}\int_{(v-t)\vee -\frac{1}{n}}^{(v-s)\wedge \frac{1}{n}} \phi^n_{v-u,s}(x_1) b(v,\phi^n_{v-u,s}(x_2))K_n(u) du dv \\
	=&\int_{s-\frac{1}{n}}^{s+\frac{1}{n}}\int_{-\frac{1}{n}}^{(v-s)}I_1 du dv + \int_{t-\frac{1}{n}}^{t+\frac{1}{n}}\int_{v-t}^{\frac{1}{n}} I_1 du dv + \int_{s+\frac{1}{n}}^{t-\frac{1}{n}}\int_{-\frac{1}{n}}^{\frac{1}{n}} I_1 du dv
	\end{align}
	where $v=r+u$ and $I_1=\phi^n_{v-u,s}(x_1) b(v,\phi^n_{v-u,s}(x_2))K_n(u)$. The first two of these integrals are over an area that is $O(n^{-2})$ and the integrand $I_1=O(n)$, so only the final integral will contribute to the limit.
	
	\begin{align}
	&\int_{s}^{t} \phi^n_{rs}(x_1) b(r,\phi^n_{rs}(x_2)) dr \\
	=&\int_{s}^{t} \int_{-\frac{1}{n}}^{\frac{1}{n}}\phi^n_{rs}(x_1) b(r,\phi^n_{rs}(x_2)) K_n(u) du dr \\
	=&\int_{s-\frac{1}{n}}^{s+\frac{1}{n}} \int_{-\frac{1}{n}}^{\frac{1}{n}} I_2 du dr +\int_{t-\frac{1}{n}}^{t+\frac{1}{n}} \int_{-\frac{1}{n}}^{\frac{1}{n}} I_2 du dr +\int_{s+\frac{1}{n}}^{t-\frac{1}{n}} \int_{-\frac{1}{n}}^{\frac{1}{n}} I_2 du dr
	\end{align}
	where $I_2=\phi^n_{rs}(x_1) b(r,\phi^n_{rs}(x_2)) K_n(u)$. Again, the first two terms are $O(n^{-1})$, so only the last term will contribute to the limit. Combining these 2 rearrangements together and discarding small terms we find that
	\begin{align}
	&\lim_n \EE_{\phi^n}(D_n) \\=&\lim_n \EE_{\phi^n}\left(\int_{s+\frac{1}{n}}^{t-\frac{1}{n}}\int_{-\frac{1}{n}}^{\frac{1}{n}} I_1 du dv-\int_{s+\frac{1}{n}}^{t-\frac{1}{n}} \int_{-\frac{1}{n}}^{\frac{1}{n}} I_2 du dr \right) \\
	=&\lim_n \EE_{\phi^n}\int_{s+\frac{1}{n}}^{t-\frac{1}{n}} \int_{-\frac{1}{n}}^{\frac{1}{n}} I_3 K_n(u)du dr \\
	\leq&\lim_n \int_{s+\frac{1}{n}}^{t-\frac{1}{n}} \int_{-\frac{1}{n}}^{\frac{1}{n}} K_n(u)du dr \sup_{\substack{u\in[-\frac{1}{n},\frac{1}{n}] \\ r\in \left[s+\frac{1}{n},t-\frac{1}{n}\right]}} \EE_{\phi^n}I_3 \\
	\leq&(t-s)\lim_n \sup_{\substack{u\in[-\frac{1}{n},\frac{1}{n}] \\ r\in \left[s+\frac{1}{n},t-\frac{1}{n}\right]}} \EE_{\phi^n}I_3
	\end{align}
	where
	\begin{equation}
	I_3=\left(\phi^n_{r-u,s}(x_1) b(r,\phi^n_{r-u,s}(x_2))-\phi^n_{rs}(x_1) b(r,\phi^n_{rs}(x_2))\right).
      \end{equation}
      As $b$ is Lipschitz in space and the $\phi^n$ have bounded diffusivity, this final supremum convereges to zero.
\end{proof}

\vspace{10pt}
\section{Appendix}
\label{Appendix}
\vspace{10pt}

The following result is required to prove the existence of the coalescing diffusive flows, as stated in Theorem \ref{webexists}. It is a generalization of Proposition A.10 of \cite{NT} and has a similar proof.
\begin{prop}
	\label{technicalproposition}
	Let $E$ be a countable subset of $\RR^2$ containing $\QQ^2$, and let $a,b$ be measurable and uniformly bounded on compacts in time and $L$-Lipschitz in space. Then, taking $C^\circ_E=C^{\circ,+}_E\cap C^{\circ,-}_E$, we have $\mu^{a,b}_E(C^\circ_E)=1$.
\end{prop}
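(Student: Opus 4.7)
The plan is to adapt the proof of Proposition 8.10 of \cite{NT} to the diffusive setting. That result characterizes $C^{\circ,+}_E$ as the subset of $z \in C_E$ satisfying: (a) ordering for paths sharing a starting time, preserved modulo $1$; (b) coalescence, so that $z^j$ and $z^k$ agree modulo $1$ after $T^{jk}$; (c) flow consistency, i.e.\ whenever $s_j < s_k$ and $z^j_{s_k} \equiv x_k \pmod 1$, the paths agree from $s_k$ onwards; and (d) a spatial right-continuity condition at each rational starting time $s$, so that $x \mapsto Z^{(s,x)}_\cdot$ restricted to $\QQ$ extends to a right-continuous monotone function uniformly on each compact time interval. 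The task is then to verify (a)--(d) almost surely under $\mu^{a,b}_E$, repeat for left-continuous extensions to obtain $\mu^{a,b}_E(C^{\circ,-}_E)=1$, and intersect.

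Properties (a), (b), (c) follow from the martingale problem and uniqueness in law. Coalescence is built into $\mu^{a,b}_E$ through the collision times $T^{jk}$. For ordering, the construction sketch following Proposition \ref{countableweb} realizes the joint law via independent diffusions coalescing on contact, and before contact a standard comparison argument using the Lipschitz bound on $b$ preserves the order of two paths with ordered starts. Flow consistency follows by applying the strong Markov property at the stopping time $s_k$ and invoking uniqueness in law of the SDE together with the coalescence rule. Since $E$ is countable, the corresponding exceptional events are each null and their union remains null.

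The main technical step is (d), for which I would prove that for every rational $s$, every $T > 0$, and every $\epsilon > 0$,
\begin{equation}
\lim_{\delta \downarrow 0} \sup_{x \in \QQ} \PP\left( \sup_{t \in [s, s+T]} |Z^{(s, x+\delta)}_t - Z^{(s, x)}_t| > \epsilon \right) = 0.
\end{equation}
Let $Y_t = Z^{(s,x+\delta)}_t - Z^{(s,x)}_t$. By (a) and (b), $Y$ is nonnegative, starts at $\delta$, and is absorbed at $0$ after the collision time $\tau$. Before $\tau$ the two paths evolve as independent diffusions, so $Y$ is a continuous semimartingale with drift bounded by $L|Y|$ (by Lipschitz $b$) and quadratic variation rate bounded by $2a^*$. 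The main obstacle is converting this into a uniform quantitative bound in the inhomogeneous setting: using $a_* \leq a \leq a^*$ together with a Gronwall-type estimate on $\EE[Y_{t \wedge \tau}]$, and Doob's maximal inequality on the martingale part of $Y$, one obtains a bound of the form $\PP(\sup_{t \leq s+T} Y_t > \epsilon) \leq C_{T,\epsilon}\,\delta$, vanishing in $\delta$ uniformly in $x$ and in $s$ on compacts. Granted the estimate, a union bound over countably many rational data gives (d) almost surely, and the reconstruction map then produces the required $\phi \in C^\circ(\RR,\DDD)$ with $Z^{E,+}(\phi)=z$, so $z\in C^{\circ,+}_E$. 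The symmetric argument yields $z\in C^{\circ,-}_E$ and intersection gives $\mu^{a,b}_E(C^\circ_E)=1$.
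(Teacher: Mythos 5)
Your overall strategy—verify the $\cite{NT}$-style characterization of $C^{\circ,\pm}_E$ inside $C_E$ condition by condition—matches the paper, and your Gronwall/supermartingale estimate for spatial right-continuity is a valid alternative to the paper's argument (the paper instead shows that two $\QQ$-started paths at distance $1/n$ \emph{coalesce} before time $u$ with probability tending to $1$, which together with ordering forces the inf and sup over rationals to agree; both routes work). However, your list (a)--(d) omits a fifth and essential condition that the paper must verify, namely the temporal cadlag (near-identity) property: for every $\epsilon>0$ and $n\in\NN$ there is $\delta>0$ such that
\begin{equation}
\norm{\Phi_{(s,t]}-\mathrm{id}}<\epsilon \quad\text{for all } s,t\in\QQ\cap(-n,n),\ 0<t-s<\delta.
\end{equation}
This is a uniform-in-$(s,x)$ modulus of continuity in \emph{time}, and it does not follow from your (d), which controls regularity of $x\mapsto Z^{(s,x)}_\cdot$ at a fixed $s$: each individual path is continuous, but you need $\Phi_{(s,t]}$ to converge to the identity uniformly over all $x$ and over all $s$ in a compact window simultaneously, which is strictly stronger. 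Without it, the reconstructed $\phi$ from your $Z^{E,+}(\phi)=z$ need not be cadlag, so you cannot conclude $z\in C^{\circ,+}_E$. The paper handles this with a Borel--Cantelli argument over a dyadic grid of space-time points $E_n$, using Gaussian tail bounds for the increments $V^e(2^{-n})$ of each path over short time windows; some such quantitative, union-bound-plus-Borel--Cantelli argument is unavoidable here, and it is the piece your sketch is missing. A secondary point: your condition (c) (agreement once one path hits another's starting point) is not quite the weak flow inequality the paper verifies for the reconstructed flow; that inequality is checked jointly with the two spatial inf/sup conditions via a single collision-probability event $A$, so if you keep your decomposition you should confirm it actually implies the weak flow property for the reconstructed $\Phi^\pm$.
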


\begin{proof}
	Following the proof of Proposition 8.10 in \cite{NT} we will verify that each of five conditions hold a.s$\ldotp$, and as they characterize $C^\circ_E$ inside $C_E$ \cite{NT}, the result follows. Let $z$ be drawn from the distribution $\mu^{a,b}_E$ and, for $e\in E$ let $z^{e}$ denote the path starting from $e$.  The first condition is that
	\begin{equation}
	z^{(s,x+n)}_t=z^{(s,x)}_t+n, \hspace{6mm} s,t,x \in \QQ, \hspace{6mm} s<t, \hspace{6mm} n \in \ZZ.
	\end{equation}
	
	Taking $e=(s,x)$ and $e'=(s,x+n)$, we have that $T^{ee'}=s$. So by the proof of Proposition \ref{countableweb}, this condition is satisfied.
	
	Next we consider the 3 conditions
	\begin{equation}
	z^{(s,x)}_t = \inf_{y \in \QQ, y>x} z^{(s,y)}_t, \hspace{6mm} (s,x)\in E, \hspace{6mm} t \in \QQ, \hspace{6mm} t>s,
	\end{equation}
	\begin{equation}
	z^{(s,x)}_t = \sup_{y \in \QQ, y<x} z^{(s,y)}_t, \hspace{6mm} (s,x)\in E, \hspace{6mm} t \in \QQ, \hspace{6mm} t>s
	\end{equation}
	and
	\begin{equation}
	\Phi^-_{(t,u]}\circ\Phi^-_{(s,t]}\leq \Phi^-_{(s,u]}\leq \Phi^+_{(s,u]}\leq \Phi^+_{(t,u]}\circ \Phi^+_{(s,t]}, \hspace{6mm} s,t,u \in \QQ, \hspace{6mm} s<t<u.
	\end{equation}
	Where we define
	\begin{equation}
	\Phi^-_{(s,t]}(x)=\sup_{y \in \QQ, y<x} z^{(s,y)}_t, \hspace{6mm} \Phi^+_{(s,t]}(x)=\inf_{y \in \QQ, y>x} z^{(s,y)}_t.
	\end{equation}
	Let $(s,x) \in E$ and $t,u \in \QQ$, with $s\leq t<u$. Consider the event
	\begin{equation}
	A=\left\{ \sup_{y \in \QQ, y<Z^{(s,x)}_t} Z^{(t,y)}_u = Z^{(s,x)}_u = \inf_{y' \in \QQ, y'>Z^{(s,x)}_t} Z^{(t,y')}_u \right\}.
	\end{equation}
	Note that on the countable intersection, over $s,x,t,u$, of the events $A$, the above 3 conditions hold. So to show they hold a.s$\ldotp$, it suffices to show $\PP(A)=1$.
	Fix $n \in \NN$ and set $Y=n^{-1}\floor{nZ^{(s,x)}_t}$ and $Y'=Y+1/n$. Then $Y$ and $Y'$ are $\QQ$ valued, $\FFF_t$-measurable random variables. Now note that $\PP(Y<Z^{(s,x)}_t<Y')=1$ and
	\begin{equation}
	\{Y<Z^{(s,x)}_t<Y'\} \cap \{ T^{(t,Y)(t,Y')}\leq u \} \subseteq A.
	\end{equation}
	Consider the process
	\begin{equation}
	Z^{(t,Y')}_r-Z^{(t,Y)}_r-2(r-t)b^*
	\end{equation}
	as a function of $\tau$ where
	\begin{equation}
	\tau=\int_t^r \left(a(\rho, Z^{(t,Y')}_\rho)+a(\rho, Z^{(t,Y)}_\rho)\right) d\rho
	\end{equation}
	is defined to make the diffusivity of this process 1.
	
	This can be bounded above by a Brownian motion $B_\tau$ started at $1/n$. For $n$ sufficiently large that $u-t>1/n$ and
	\begin{align}
	\PP( T^{(t,Y)(t,Y')}\leq u )
	&\geq \PP \left(\inf_{\tau\leq \frac{1}{n}} B_\tau <-\frac{b^*}{na_*} \right) \\
	&= 2\Phi\left( \frac{1+b^*/a_*}{\sqrt{n}} \right) \rightarrow 1.
	\end{align}
	So $\PP(A)=1$ and the conditions hold.
	
	The final condition is that for all $\epsilon>0$ and all $n\in \NN$, there exists $\delta>0$ such that
	\begin{equation}
	\norm{\Phi_{(s,t]}-\textrm{id}}_\infty<\epsilon
	\end{equation}
	for all $s,t \in \QQ \cap (-n,n)$ with $0<t-s<\delta$.
	
	Define for $\delta>0$ and $e=(s,x)\in E$,
	\begin{equation}
	V^e(\delta)=\sup_{s\leq t \leq s+\delta^2}\abs{Z^e_t-x}.
	\end{equation}
	Then, letting $B$ be a standard Brownian motion, for sufficiently small $\delta$ and large $n$
	\begin{align}
	\PP(V^e(\delta)>n\delta)
	&\leq 2\PP\left(\sup_{s\leq t \leq s+\delta^2} B_t-B_s>\frac{n\delta-b^*\delta^2}{a^*}\right) \\
	&\leq e^{-\frac{(n-1)^2}{2{a^*}^2}}.
	\end{align}
	Consider, for each $n \in \NN$ the set
	\begin{equation}
	E_n = \left\{(j2^{-2n},k2^{-n}): j \in \frac{1}{2}\ZZ \cap [-n^{\frac{1}{3}}2^{2n},n^{\frac{1}{3}}2^{2n}),k=0,1,\dots,2^n-1  \right\}
	\end{equation}
	and the event
	\begin{equation}
	A_n = \bigcup_{e\in E_n} \{V^e(2^{-n})>n2^{-n}\}.
	\end{equation}
	Then, $\PP(A_n)\leq \abs{E_n}\sup_{e\in E_n}\PP\left(V^e(2^{-n})>n2^{-n}\right)$ and

        \begin{align}
          \PP\left(V^e(2^{-n})>n2^{-n}\right)\leq& \PP\left(\sup_{s\leq t \leq s+2^{-2n}} Z^e_t-x>n2^{-n}\right) \\
                                                 &+\PP\left(\inf_{s\leq t \leq s+2^{-2n}} Z^e_t-x<n2^{-n}\right).
        \end{align}
        For $n$ sufficiently large that $2^n>b^*$, both of these terms are $O\left(e^{-\frac{(n-1)^2}{2}}\right)$ by the reflection principle. As $\abs{E_n}=e^{O(n)}$, we can conclude that $\sum_n \PP(A_n) < \infty$, so by Borel-Cantelli, almost surely there exists some $N<\infty$ such that $V^e(2^{-n})\leq n2^{-n}$ for all $e \in E_n$, for all $n\geq N$.
	
	Given $\epsilon >0$, choose $n\geq N$ such that $(4n+2)2^{-n} \leq \epsilon$ and set $\delta = 2^{-2n-1}$. Then, for all rationals $s,t \in (-n,n)$ with $0<t-s<\delta$ and all rationals $x\in [0,1]$, there exist $e^\pm = (r,y^\pm) \in E_n$ such that
	\begin{align}
	r \leq s<t\leq r+2^{-2n}, \\
	x+n2^{-n}<y^+\leq x+(n+1)2^{-n}, \\
	x-(n+1)2^{-n}\leq y^-<x-n2^{-n},
	\end{align}
	then, $Z^{e^-}_s<x<Z^{e^+}_s$, so
	\begin{equation}
	x-\epsilon \leq Z^{e^-}_t\leq Z^{(s,x)}_t \leq Z^{e^+}_t \leq x+\epsilon.
	\end{equation}
	Hence, the final condition holds almost surely and thus the proposition holds.
\end{proof}

The rest of the propositions in this appendix are used in the direct proof of Theorem \ref{reversalidentify} in Section \ref{DirectProof}. The definition of $A_N$ can be found in that proof.

\begin{prop}
	\label{ANcompact}
	$A_N$ is compact
\end{prop}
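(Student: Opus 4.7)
The plan is to show that $A_N$ is both closed and sequentially compact in the complete metric space $(C^\circ(\RR,\DDD),d_C)$, hence compact. For closedness, the two bounds defining $A_N$ are pointwise-in-$(k,s,t,x)$ inequalities on $|\phi_{ts}(x)-x|$; for continuous monotone degree-one $\phi_{ts}$ a direct unpacking of the $f^\times$ coordinate gives $d_\DDD(\phi_{ts},\mathrm{id})=\tfrac12\sup_{x\in[0,1]}|\phi_{ts}(x)-x|$, so convergence in $d_C$ (which is uniform-in-$(s,t)$ $d_\DDD$-convergence on bounded intervals) preserves both inequalities.

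For sequential compactness I will invoke an Arzel\`a--Ascoli argument, viewing an element of $C^\circ(\RR,\DDD)$ as a function $(s,t)\mapsto \phi_{ts}\in\DDD$. Given $\phi^m\in A_N$, the first bound restricts each $\phi^m_{ts}$, for $s<t$ in $[-k,k]$, to a $d_\DDD$-bounded set of continuous monotone degree-one maps; this set is relatively compact in $\DDD$ via a Helly-type argument in the $f^\times$ coordinate, where $d_\DDD$-bounded sets correspond to sup-norm-bounded $1$-periodic functions with a built-in monotonicity structure. The second bound provides the necessary equicontinuity in $(s,t)$: $d_\DDD(\phi^m_{ts},\mathrm{id})\leq 1/(2k)$ whenever $|t-s|\leq\delta_{k,N}$, and together with the flow identity $\phi_{tu}=\phi_{ts}\circ\phi_{su}$ this controls $d_\DDD(\phi^m_{tu},\phi^m_{su})$ and $d_\DDD(\phi^m_{t's'},\phi^m_{ts})$ in terms of $|t-t'|$ and $|s-s'|$. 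A diagonal extraction over a dense countable set of rational pairs $(s,t)$ combined with the equicontinuity yields a subsequence convergent in $d_C$, and the uniform $(s,t)$-modulus forces the limit to satisfy $\phi_{\{t\}}=\mathrm{id}$, so it lies in $C^\circ(\RR,\DDD)$.

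The main obstacle will be the equicontinuity step, specifically turning the displacement bound $\sup_x|\phi^m_{ts}(x)-x|\leq 1/k$ into a $d_\DDD$ bound on composition differences of the form $d_\DDD(f\circ g,g)$. This reduces to a short continuity-of-composition lemma for the $d_\DDD$ metric, provable directly from the $f^\times$ definition but requiring some care because $d_\DDD$ is not a translation-invariant norm on $\DDD$ and the composition operation interacts nontrivially with the rotated coordinate system.
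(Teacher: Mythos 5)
Your proposal is correct in outline but takes a genuinely different route from the paper. The paper establishes compactness via completeness plus sequential total boundedness: it fixes $\epsilon$, truncates the metric $d_C$ to $d^{(M)}_C$, applies Arzel\`a--Ascoli to the \emph{$\RR$-valued paths} $t\mapsto\phi_{\cdot s}(x)$ for $(s,x)$ in an explicitly chosen finite grid $E_{\epsilon,M,N}$, extracts a subsequence uniformly close on that grid, and then leverages monotonicity and the displacement bound from $A_N$ (the ``non-crossing'' of degree-one maps) to propagate closeness on the grid to closeness of the full flows, yielding a subsequence inside a $d_C$-ball of radius $\epsilon$. You instead apply Arzel\`a--Ascoli to the \emph{$\DDD$-valued map} $(s,t)\mapsto\phi_{ts}$: you observe that in the $f^\times$ coordinate, the range is a $d_\DDD$-bounded set of $1$-Lipschitz periodic functions and hence relatively compact, and you obtain the $(s,t)$-equicontinuity from the second defining bound of $A_N$ together with a composition-continuity estimate $d_\DDD(f\circ g,g)\lesssim\sup_x|f(x)-x|$ (and its mirror), which is provable from the $f^\times$ definition plus the fact that $u\mapsto\tfrac{u+g(u)}{2}$ increases with slope at least $\tfrac12$. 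Your approach is more abstract and reusable --- it isolates two clean lemmas (properness of $\DDD$ and continuity of composition) and then invokes the standard Arzel\`a--Ascoli theorem for metric-space-valued functions --- whereas the paper's approach avoids the composition lemma by exploiting non-crossing directly, at the cost of the more hands-on grid construction and the two-step bound via an intermediate point $(u,y)$ in $E_{\epsilon,M,N}$. You are honest about having deferred the composition lemma; it does hold, so the plan is sound.
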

\begin{proof}
	$A_N$ is a closed subset of $C^\circ(\RR,\DDD)$, and so is complete. Therefore, by a diagonal argument, it suffices to show that for all $\epsilon>0$ and for all sequences $\mathcal{S}$ in $A_N$, there exists a subsequence $\mathcal{S}'$ that is contained in a ball of radius $\epsilon$.
	
	To this end take $M$ such that
	\begin{equation}
	\sum_{m=M+1}^{\infty}2^{-m}<\frac{\epsilon}{2}
	\end{equation}
	then we have that
	\begin{equation}
	d_C(\phi,\psi)<\sum_{m=1}^{M}2^{-m}d^{(m)}_C(\phi,\psi)+\frac{\epsilon}{2} \hspace{5mm} \forall \phi,\psi \in C^\circ(\RR,\DDD).
	\end{equation}
	Thus, it suffices to find a subsequence $\mathcal{S}'$ where, for $m=1$ to $M$, we have
	\begin{equation}
	\label{dmcbound}
	d^{(m)}_C(\phi,\psi)<\frac{\epsilon}{2}\hspace{5mm} \forall \phi,\psi \in \mathcal{S}'.
	\end{equation}
	As $d^{(m)}_C$ is increasing in $m$, it suffices for this to hold for $m=M$. Note that $d^{(M)}_C$ only depends on the flows between times in $[-M,M]$. By the definition of $A_N$, the set of paths from a given point, for each of the flows in $\mathcal{S}$, is uniformly bounded and equicontinuous when restricted to the interval $[-M,M]$. This interval is also compact, so by the Arzel\`{a}-Ascoli Theorem, the set of such restricted paths is compact in the uniform norm. Using this compactness we can, for a finite set $E_{\epsilon,M,N}\subset[-M,M]\times [0,1]$, find a subsequence $\mathcal{S}'$ of $\mathcal{S}$ such that
	\begin{equation}
	\label{unifbound}
	\lVert \phi_{\cdot s}(x)-\psi_{\cdot s}(x) \rVert_{L^\infty([s,M])}<\frac{\epsilon}{2} \hspace{5mm} \forall \phi,\psi \in \mathcal{S}' \hspace{5mm} \forall (s,x)\in E_{\epsilon,M,N}.
	\end{equation}
	Let $[i]=\{1,\dots,i\}$. We will take the $\mathcal{S}'$ corresponding to
	\begin{align}
	E_{\epsilon,M,N}=\left\lbrace \left(-M+m\delta_{K,N} ,\frac{l\epsilon}{6}\right) : m\in \left[\left\lceil\frac{2M}{\delta_{K,N}}\right\rceil\right], l\in \left[\left\lceil \frac{6}{\epsilon}\right\rceil \right] \right\rbrace
	\end{align}
	where $K=\max\left\{ \ceil{\frac{6}{\epsilon}},M \right\}$.
	It remains to show from \eqref{unifbound} that \eqref{dmcbound} holds for $m=M$, i.e.
	\begin{equation}
	\label{distanceBound}
	d_\DDD(\phi_{ts},\psi_{ts})<\frac{\epsilon}{2} \hspace{5mm} \forall s,t\in[-M,M],s<t \hspace{5mm} \forall \phi,\psi \in \mathcal{S}'.
	\end{equation}
	By the definition of $d_\DDD$ this is the same as saying that for all $s,t,\phi,\psi$ and all $x$
	\begin{equation}
	\label{finalthing}
	\psi_{ts}\left(x-\frac{\epsilon}{2}\right)< \phi_{ts}(x)+\frac{\epsilon}{2}
	\end{equation}
	and
	\begin{equation}
	\phi_{ts}\left(x-\frac{\epsilon}{2}\right)< \psi_{ts}(x)+\frac{\epsilon}{2}.
	\end{equation}
	We will show the first of these the other follows by symmetry.
	
	Given $s,t,\phi,\psi$ as in \eqref{distanceBound}, there exists
	\begin{equation}
	(u,y)\in [s,s+\delta_{K,N}]\times \left(x-\frac{\epsilon}{3},x-\frac{\epsilon}{6}\right)\cap E_{\epsilon,M,N}
	\end{equation}
	and by the equicontinuity condition in the definition of $A_N$
	\begin{equation}
	\psi_{us}\left(x-\frac{\epsilon}{2}\right)<y
	\end{equation}
	\begin{equation}
	\phi_{us}(x)>y.
	\end{equation}
	Putting these together with \eqref{unifbound} we get
	\begin{equation}
	\psi_{ts}(x-\frac{\epsilon}{2})\leq \psi_{tu}(y)< \phi_{tu}(y)+\frac{\epsilon}{2}\leq \phi_{ts}(x)+\frac{\epsilon}{2}.
	\end{equation}
	This is Equation \eqref{finalthing} and so we are done.
\end{proof}

\begin{prop}
	\label{ANhighprob}
	As $N\rightarrow \infty$
	\begin{equation}
	\PP(\phi^n\in A_N)\rightarrow 1
	\end{equation}
	uniformly in $n$.
\end{prop}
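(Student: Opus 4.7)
The plan is to bound, for each integer $k \geq 1$, the probability that one of the two defining inequalities of $A_N$ fails at level $k$, and then sum over $k$ to obtain a bound on $\PP(\phi^n \notin A_N)$ that decays as $N \to \infty$, uniformly in $n$. Uniformity in $n$ is tractable because $a_n = a \ast K_n$ and $b_n = b \ast K_n$ satisfy $\sup_{[-k,k]\times[0,1]} a_n \leq a^*_k$ and $\sup_{[-k,k]\times[0,1]} |b_n| \leq b^*_k$ (using that $K_n$ is supported in $[-1/n,1/n]$ and integrates to $1$, and that $a^*_k, b^*_k$ are defined with the enlargement $[-k-1,k+1]\times[0,1]$), so every tail estimate below has constants depending only on $a^*_k$ and $b^*_k$.

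The workhorse is a one-path tail bound. For any $(s,x) \in [-k,k] \times [0,1]$ and $T$ with $s + T \leq k$, the process $t \mapsto \phi^n_{ts}(x) - x - \int_s^t b_n(r, \phi^n_{rs}(x))\, dr$ is a continuous local martingale whose quadratic variation on $[s, s+T]$ is almost surely bounded by $T a^*_k$, so by the Dubins--Schwarz representation and the Brownian reflection principle,
\begin{equation}
\PP\left( \sup_{s \leq t \leq s+T} \left| \phi^n_{ts}(x) - x \right| \geq T b^*_k + L \right) \leq 2 \exp\left( - \frac{L^2}{2 T a^*_k} \right)
\end{equation}
uniformly in $n$.

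For the first defining inequality, I would take $T = 2k$ and $L = kN\sqrt{8 a^*_k}$, giving a per-path failure probability at most $2\exp(-2kN^2)$. Thanks to the degree-$1$ property and monotonicity of $\phi^n_{ts}$, controlling $\phi^n_{ts}(0)$ suffices, since $|\phi^n_{ts}(x) - x| \leq |\phi^n_{ts}(0)| + 1$ for every $x \in [0,1]$ (the $+1$ slack in the defining inequality is there precisely for this reason). A further discretization of $s$ and $t$ on a grid of polynomial size in $k$, together with the weak flow property to interpolate between grid points, produces a per-$k$ failure probability of order $Ck \exp(-2kN^2)$, which sums in $k$ and vanishes as $N \to \infty$.

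For the second inequality (the modulus-of-continuity condition), I would partition $[-k,k]$ into $J = \lceil 2k/\delta_{k,N}\rceil$ subintervals of length $\delta_{k,N}$ with endpoints $s_j$. Any $s, t \in [-k,k]$ with $0 \leq t - s \leq \delta_{k,N}$ lies in some $[s_j, s_{j+2}]$; by continuity there exists $y$ with $\phi^n_{s, s_j}(y) = x$, and then the flow property gives $|\phi^n_{ts}(x) - x| = |\phi^n_{t, s_j}(y) - \phi^n_{s, s_j}(y)| \leq 2 \sup_{u \in [s_j, s_{j+2}]} |\phi^n_{u, s_j}(y) - y|$. Applying the tail bound with $T = 2\delta_{k,N}$ and $L \approx 1/(8k)$ (afforded by the choice of $\delta_{k,N}$, which makes the drift contribution $\delta_{k,N} b^*_k$ much smaller than $1/k$ for large $N$), I obtain an exponent at least of order $k N (1 + a^*_k + b^*_k)$ per grid point; a union bound over the $O(k^4 N)$ starting configurations still decays in $N$ and sums in $k$. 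The main obstacle is precisely this step: the choice of $\delta_{k,N}$ in the statement is engineered to balance the exponential tail decay against the polynomial grid cardinality, and the $1/n$-widening of the supports of $a_n, b_n$ compared to $a, b$ is what lets the whole argument run with constants independent of $n$.
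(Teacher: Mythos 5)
Your proposal is essentially the paper's own strategy: a union bound over $k$, a reflection-principle Gaussian tail estimate for the driving diffusions (uniformity in $n$ secured exactly as you say, by $a_n, b_n$ inheriting the bounds $a^*_k, b^*_k$ from the $1/n$-widened window), and a space--time grid of spacing $\delta_{k,N}$ in time and $O(1/k)$ in space together with the non-crossing/monotonicity sandwich for the modulus-of-continuity condition. The one place where the paper's route is cleaner and where your plan is under-specified is the boundedness condition. You reduce to $\lvert\phi^n_{ts}(0)\rvert$ but then propose to discretize both $s$ and $t$ and ``interpolate via the weak flow property''; the natural way to interpolate across a change of starting time $s$ is itself a modulus-of-continuity estimate, so this step either quietly imports the second condition or needs extra work to make precise. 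The paper avoids all of this: by the weak flow property together with degree-$1$ and monotonicity, every $\phi^n_{ts}(x)$ with $s<t$ in $[-k,k]$ and $x\in[0,1]$ is controlled by the single anchored path $t\mapsto\phi^n_{t,-k}(0)$, whose supremum over $[-k,k]$ is a sup of one diffusion and is bounded directly by the reflection principle, giving $1-4\Phi(-N\sqrt{k})$ per level $k$ with no grid and no interpolation at all. Your equicontinuity argument otherwise matches the paper's; the details you leave implicit (the explicit spatial grid $E_{k,N}$ and the non-crossing sandwich needed to pass from grid points $y$ to arbitrary $y$) present no obstacle and are spelled out there.
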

\begin{proof}
	Throughout $W_t$ is a standard Brownian motion. We start by showing that w.h.p$\ldotp$ the condition that gives uniform boundedness on compact intervals holds.
	\begin{align}
	&\PP\left(\lvert\phi^n_{ts}(x)-x\rvert<4b^*_kk+kN\sqrt{8a^*_k}+1 \hspace{5mm} \forall x\in[0,1] \hspace{5mm} \forall s<t \in [-k,k]\right) \\
	\geq&\PP\left(\sup_{t\in [-k,k]} \lvert \phi^n_{t,-k}(0) \rvert<2b^*_kk+kN\sqrt{2a^*_k}\right) \\
	\geq&1-4\PP\left(\sqrt{a^*_k}W_{2k}>kN\sqrt{2a^*_k}\right) \\
	=&1-4\Phi\left(-N\sqrt{k}  \right)
	\end{align}
	and thus
	\begin{align}
	&\PP\left(\lvert\phi^n_{ts}(x)-x\rvert<4b^*_kk+kN\sqrt{8a^*_k}+1 \hspace{4mm} \forall s<t \in [-k,k] \hspace{4mm} \forall x\in [0,1] \hspace{4mm} \forall k\right) \\
	\geq& 1-4\sum_{k=1}^{\infty} \Phi(-N\sqrt{k})\rightarrow 1.
	\end{align}
	Now we will show that w.h.p$\ldotp$ the equicontinuity requirement on compact intervals holds. Let
	\begin{align}
	E_{k,N}=\left\lbrace \left(k-m\delta_{k,N} ,\frac{l}{3k}\right) : m\in \left\lbrace 1,\dots ,\left\lceil\frac{2k}{\delta_{k,N}}\right\rceil\right\rbrace, l\in \{ 1,\dots, 3k \} \right\rbrace.
	\end{align}
	The below calculation says that with high probability for all $k$ paths from each of these points will not move more than $\frac{1}{3k}$ from their stating point within time $2\delta_{k,N}$ and the non-crossing property then implies the required equicontinuity. It proceeds as follows,
	\begin{align}
	&\PP\left( \left| \phi^n_{ts}(x)-x \right|\leq \frac{1}{k}\hspace{5mm} \forall x\in [0,1] \hspace{5mm} \forall s,t \in [-k,k] \text{ with } t-s \in [0,\delta_{k,N}]  \right) \\
	\geq&\PP\left( \sup_{t\in [s,s+2\delta_{k,N}]} \left| \phi^n_{ts}(x)-x \right| <\frac{1}{3k} \hspace{5mm} \forall (s,x)\in E_{k,N}  \right) \\
	\geq&1-4\lvert E_{k,N} \rvert \PP \left( \sqrt{a^*_k}W_{2\delta_{k,N}}+2\delta_{k,N} b^*_k >\frac{1}{3k} \right) \\
	=&1-12k\left\lceil \frac{2k}{\delta_{k,N}} \right\rceil \Phi\left( -\frac{1}{\sqrt{2\delta_{k,N}a^*_k}} \left(\frac{1}{3k}-2\delta_{k,N}b^*_k \right) \right) \\
	\geq& 1-\frac{36k^2}{\delta_{k,N}} \Phi\left( -\frac{1}{\sqrt{18k^2a^*_k\delta_{k,N}}}+\sqrt{\frac{2{b^*_K}^2\delta_{k,N}}{a^*_k}} \right) \\
	\geq&1-\max \left\lbrace \frac{72k^2{b^*_k}^2}{a^*_k} , 648k^5Na^*_k(1+a^*_k+b^*_k) \right\rbrace \Phi\left( -\sqrt{kN(1+a^*_k+b^*_k)}+1\right).
	\end{align}
	As the maximum can be bounded by a polynomial in $k,N,a^*_k$ and $b^*_k$, and $\Phi(\dots)$ is decreasing exponentially in all of those variables, we can conclude by use of a union bound that
	\begin{equation}
	\PP\left( \left| \phi^n_{ts}(x)-x \right|\leq \frac{1}{k} \hspace{4mm}\forall x\in [0,1] \hspace{4mm} \forall s,t \in [-k,k] \text{ with } t-s \in [0,\delta_{k,N}] \hspace{4mm} \forall k \right)
	\end{equation}
	$\rightarrow 1$ as $N\rightarrow \infty$.
\end{proof}

\begin{prop}
	\label{EMcontinuousinx}
	$\EE \left( M_{st}(x_1,x_2,b,a,\phi) \right)$ is a continuous function of $x_1$ and $x_2$.
\end{prop}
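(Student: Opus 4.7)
The plan is to prove continuity of $\EE_\phi M_{st}(x_1,x_2,b,a,\phi)$ in $(x_1,x_2)$ by dominated convergence, exploiting monotonicity of $x\mapsto\phi_{rs}(x)$ together with the absence of jumps at fixed spatial points. The key fact is that each approximating law $\mu^{a_n,b_n}_A$ satisfies $\phi^{n,+}_{rs}(x)=\phi^{n,-}_{rs}(x)$ almost surely for every fixed $(r,x)$ by Theorem \ref{webexists}, and this property should transfer to the weak sub-sequential limit.

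First I would establish the no-jump property at fixed spatial points under the limit law. For each fixed $(s,r,x)$, the increment $\phi^n_{rs}(x+\delta)-\phi^n_{rs}(x-\delta)$ tends to zero in probability as $\delta\downarrow0$ uniformly in $n$, since the Gaussian-style tail bounds from the proof of Proposition \ref{technicalproposition} depend only on the uniform constants $a^*_k,b^*_k$, which are shared by $a_n,b_n$ and by $a,b$. Because the relevant evaluation maps at fixed times and starting points are continuous at elements of $C^\circ(\RR,\DDD)$ (where the flow has no time-jumps and the $d_D$-reparametrisation is irrelevant), this uniform estimate passes to the weak limit, giving $\phi^+_{rs}(x)=\phi^-_{rs}(x)$ almost surely under the limit measure for each fixed $(r,x)$.

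Next, for any sequence $(x_1^{(k)},x_2^{(k)})\to(x_1,x_2)$, spatial monotonicity of $\phi_{rs}$ squeezes $\phi_{rs}(x_i^{(k)})$ between $\phi^-_{rs}(x_i)$ and $\phi^+_{rs}(x_i)$ in the limit, and these agree almost surely by the previous step. So $\phi_{rs}(x_i^{(k)})\to\phi_{rs}(x_i)$ almost surely, first for each rational $r\in[s,t]$ and then for all $r$ by time-continuity of the paths. Restricting to the high-probability event $\phi\in A_N$ provides uniform boundedness of all integrands on $[s,t]$, so dominated convergence handles the product term $\phi_{ts}(x_1^{(k)})\phi_{ts}(x_2^{(k)})$ and, via Fubini, the drift integrals. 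For the diffusivity integral I would rewrite it as $\int_s^t \mathbb{1}_{\{r\geq T^{(s,x_1)(s,x_2)}\}}a(r,\phi_{rs}(x_1))\,dr$ and argue that $T^{(s,x_1^{(k)})(s,x_2^{(k)})}\to T^{(s,x_1)(s,x_2)}$ almost surely, using uniform convergence of the paths on $[s,t]$.

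The main obstacle I expect is the continuity of the coalescence time in the starting points: a priori, paths in the limit might touch without coalescing, which would render $T$ discontinuous. I would rule this out by observing that under the limit measure the difference of two paths is a semimartingale with non-degenerate diffusivity ($a_*>0$ on compacts, inherited from $a_n\to a$), so a standard local-time argument forces coalescence at the first meeting time, restoring the continuity needed to close the dominated-convergence argument.
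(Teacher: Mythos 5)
Your approach is genuinely different from the paper's, and the difference matters. The paper avoids any direct analysis of how the coalescence time depends on starting points. Instead it splits $M_{st}$ into the contribution on $[s,s+\delta]$, which is $O(\delta)$ by crude uniform bounds, and the rest, which by the strong Markov property at time $s+\delta$ depends on $(x_1,x_2)$ only through the pair $(\phi_{s+\delta,s}(x_1),\phi_{s+\delta,s}(x_2))$. It then invokes Proposition \ref{mixingdiffusions}, a coupling estimate showing that the joint laws of such pairs started from $(x_1,x_2)$ and $(x_1',x_2')$ converge in total variation as $\delta\to 0$, uniformly over $d_{eucl}((x_1,x_2),(x_1',x_2'))<\delta$. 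Combined with uniform integrability this kills the starting-point dependence directly and even gives uniform continuity, all without ever asking whether $T^{(s,x_1)(s,x_2)}$ is continuous in $(x_1,x_2)$.

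The gap in your proposal is precisely that last point. You correctly identify the difficulty but misdiagnose it: the issue is not merely whether paths could touch without coalescing, and a local-time argument for coalescence-at-first-contact would not by itself give you $T^{(s,x_1^{(k)})(s,x_2^{(k)})}\to T^{(s,x_1)(s,x_2)}$ almost surely. Even granting coalescence-on-contact and the no-jump property, the monotone sequence $T^{(s,x_1^{(k)})(s,x_2^{(k)})}$ has a limit $T^*\geq T^{(s,x_1)(s,x_2)}$, and to rule out a strict gap you need a quantitative estimate: if $\phi_{r}(x_1^{(k)})-\phi_{r}(x_2^{(k)})$ is smaller than $\epsilon$ at some time $r$ slightly after $T^{(s,x_1)(s,x_2)}$, then the pair coalesces within a further $O(\epsilon^2)$ with probability tending to one. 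That estimate is essentially the hitting-time bound inside Proposition \ref{mixingdiffusions}, so your route does not dispense with that machinery, it only relocates it and adds a delicate passage from estimates on $\phi^n$ to almost-sure statements about the weak limit (a.s. properties of $\phi^n$ do not transfer automatically; you would need the tail bounds to pass through the $d_D$ topology at the points you care about). Without supplying this hitting-time estimate for the limit flow and making the transfer rigorous, the dominated-convergence argument does not close.
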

\begin{proof}
	We will show that
	\begin{equation}
	\lvert \EE_\phi\left(M_{st}(x_1,x_2,b,a,\phi)\right)-\EE_\phi\left(M_{st}(x'_1,x'_2,b,a,\phi)\right) \rvert \rightarrow 0
	\end{equation}
	uniformly for $d_{eucl}((x_1,x_2),(x'_1,x'_2))<\delta$ as $\delta\rightarrow 0$. We start by decomposing $M_{st}(x_1,x_2,b,a,\phi)$ into the integrals up to time $s+\delta$ and the rest. The integrals up until time $s+\delta$ are
	\begin{equation}
	-\int_{s}^{s+\delta} \left(\phi_{rs}(x_1)b(r,\phi_{rs}(x_2))+\phi_{rs}(x_2)b(r,\phi_{rs}(x_1))\right)dr
	\end{equation}
	and
	\begin{equation}
	-\int_{T^{(s,x_1)(s,x_2)} \wedge t}^{(T^{(s,x_1)(s,x_2)} \wedge t) \vee (s+\delta)} a(r,\phi_{rs}(x_1)) dr.
	\end{equation}
	Taking expected value w.r.t$\ldotp$ $\phi$ and exchanging order of integration leaves two integrals with length at most $\delta$ and integrands bounded by
	\begin{equation}
	b^*\sup_{r\in [s,s+\delta]} \EE_\phi(\abs{\phi_{rs}(x_1)}+\abs{\phi_{rs}(x_2)}) \text{ and }a^*
	\end{equation}
	respectively. As $\phi_{rs}(x_i)$ is uniformly integrable for $r\leq t$ these integrals contribute only $O(\delta)$ to $M$. Thus they can be neglected.
	
	We will use $M^\delta_{st}$ to mean $M_{st}$ minus the integrals we have just shown are $O(\delta)$. Note that
	\begin{equation}
	\EE_\phi\left(M^\delta\right)=\EE_\phi\left(\EE_\phi\left(M^\delta|\FFF_{s+\delta}\right)\right)
	\end{equation}
	and by the strong Markov property
	\begin{equation}
	\EE_\phi\left(M^\delta_{st}(x_1,x_2,b,a,\phi)|\FFF_{s+\delta}\right)
	\end{equation}
	is a function of $\phi_{s+\delta,s}(x_1)$ and $\phi_{s+\delta,s}(x_2)$. Proposition \ref{mixingdiffusions} says that
	\begin{equation}
	d_{TV}((\phi_{s+\delta,s}(x_1),\phi_{s+\delta,s}(x_2)),(\phi_{s+\delta,s}(x'_1),\phi_{s+\delta,s}(x'_2)))\rightarrow 0
	\end{equation}
	so we can deduce that
	\begin{equation}
	d_{TV}\left(\EE_\phi\left(M^\delta_{st}(x_1,x_2,b,a,\phi)|\FFF_{s+\delta}\right),\EE_\phi\left(M^\delta_{st}(x'_1,x'_2,b,a,\phi)|\FFF_{s+\delta}\right)\right)\rightarrow 0.
	\end{equation}
	Combining this with the fact that $\EE_\phi(M^\delta_{st}(x_1,x_2,b,a,\phi)|\FFF_{s+\delta})$ is uniformly integrable for $(x_1,x_2)$ in each compact set, we are done.
\end{proof}

\begin{prop}
	\label{mixingdiffusions}
	\begin{equation}
	d_{TV}((\phi_{s+\delta,s}(x_1),\phi_{s+\delta,s}(x_2)),(\phi_{s+\delta,s}(x'_1),\phi_{s+\delta,s}(x'_2)))\rightarrow 0
	\end{equation}
	uniformly for $d_{eucl}((x_1,x_2),(x'_1,x'_2))< \delta$ as $\delta\rightarrow 0$.
\end{prop}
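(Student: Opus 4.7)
The plan is to first establish the single-particle bound $d_{TV}(\phi_{s+\delta,s}(x), \phi_{s+\delta,s}(x')) = O(|x-x'|/\sqrt{\delta})$ and then lift it to the two-point joint law. The key numerical observation is that, under the hypothesis $d_{eucl}((x_1,x_2),(x'_1,x'_2)) < \delta$, the spatial shift is of order $\delta$ while the diffusive spread of $\phi_{s+\delta,s}$ over time $\delta$ is of order $\sqrt{\delta}$, which is asymptotically much larger, so a shift in initial condition should become invisible in total variation.

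For the single-particle estimate I would use Aronson-type heat kernel bounds. Under the standing hypotheses ($a$ uniformly bounded and bounded below on compacts in time, $a$ and $b$ Lipschitz in space), standard parabolic theory supplies a transition density $p_{s,s+\delta}(x,y)$ with Gaussian two-sided bounds together with a gradient estimate $|\partial_x p_{s,s+\delta}(x,y)| \leq C \delta^{-1} \exp(-c(y-x)^2/\delta)$. Integrating the gradient bound in $y$ yields $\int |\partial_x p_{s,s+\delta}(x,y)| \, dy = O(\delta^{-1/2})$, so the fundamental theorem of calculus gives
\begin{equation*}
d_{TV}(\phi_{s+\delta,s}(x),\phi_{s+\delta,s}(x')) \leq \tfrac12 \int |p_{s,s+\delta}(x,y)-p_{s,s+\delta}(x',y)|\,dy \leq C |x-x'|/\sqrt{\delta}.
\end{equation*}
With $|x-x'|\leq \delta$ this is $O(\sqrt{\delta})$, vanishing as $\delta \to 0$ uniformly in $s$ on any compact.

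For the joint law I would split according to whether coalescence has occurred. Let $\tau$ denote the coalescence time of the pair started from $(x_1,x_2)$. On $\{\tau > s+\delta\}$ the joint law has a density on $\{y_1 \neq y_2\}$ that solves a two-dimensional parabolic equation with Dirichlet boundary on the diagonal, and the same Aronson/gradient machinery applied to this killed two-point diffusion controls the off-diagonal piece. The diagonal (coalesced) piece is then controlled by the fact that both the coalescence probability $\PP(\tau\le s+\delta)$ and the conditional law of the coalesced particle depend continuously in TV on $(x_1,x_2)$, again via single-particle heat-kernel considerations applied from the random starting configuration at time $\tau$. The main obstacle is precisely this diagonal piece: to handle it cleanly I would construct a simultaneous coupling of the two coalescing pairs, driving $\phi_{\cdot,s}(x_i)$ and $\phi_{\cdot,s}(x_i')$ by the same Brownian motion (independently for $i=1,2$) up to an intermediate time and then switching to the maximal coupling at the single-particle level, coordinating the two coalescence events via the strong Markov property. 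On the high-probability event that each of the four paths stays within $O(\sqrt{\delta}\log(1/\delta))$ of its starting point, the coalescence times of the two configurations agree with probability $1-o(1)$, and the joint laws can be coupled to coincide with the same probability, yielding the uniform TV convergence.
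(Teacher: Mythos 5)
Your approach differs substantially from the paper's, and while the single-particle heat-kernel bound you begin with is sound, the lift to the joint law is where the proposal either stalls or circles around the correct idea without landing on it. The paper's proof is a single short coupling argument: it builds a copy $\tilde\phi$ of the flow, coupled to $\phi$ so that $\phi_{\cdot,s}(x_i)$ and $\tilde\phi_{\cdot,s}(x_i')$ evolve \emph{independently until they meet}, at which point they coalesce (and the construction is ordered --- first couple the $x_1$/$x_1'$ pair, then the $x_2$/$x_2'$ pair, respecting the internal coalescing structure of the flow). Once $\phi(x_i)=\tilde\phi(x_i')$ as paths, the coalescence event of the $(x_1,x_2)$ pair automatically agrees with that of the $(x_1',x_2')$ pair, so the two joint laws literally coincide on the event that both couplings succeed. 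The TV distance is then dominated by $\sum_i \PP\bigl(\phi(x_i)\neq\tilde\phi(x_i')\bigr)$, and each term is a one-dimensional hitting estimate: two independent diffusions with diffusivity bounded below, started within $\delta$ of each other, meet by time $\delta$ except with probability $O(\sqrt\delta)$. No heat-kernel regularity is needed.

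Two concrete problems with your route. First, the parabolic machinery for the two-point killed diffusion (Dirichlet on the diagonal) is not ``the same Aronson/gradient machinery'': gradient bounds for Dirichlet heat kernels degenerate near the boundary, and you would have to separately control TV-continuity of the coalescence probability, of the conditional law on $\{\tau>s+\delta\}$, and of the conditional law of the coalesced particle, none of which is immediate from the free-space gradient estimate. Second, the coupling you finally reach for --- synchronous (same Brownian motion) up to an intermediate time, then a maximal coupling at the single-particle level, with the coalescence events ``coordinated via the strong Markov property'' --- does not obviously produce agreement of the coalescence times. A synchronous coupling keeps $\phi(x_i)$ and $\phi(x_i')$ at distance $O(\delta)$ but never merges them, and switching to a maximal coupling at a fixed time makes the marginals agree at that time without making the \emph{paths} agree thereafter, so nothing forces $T^{(s,x_1)(s,x_2)}$ and $T^{(s,x_1')(s,x_2')}$ to coincide. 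The missing idea is precisely to use a coalescing coupling (independent-until-meeting) of $\phi(x_i)$ with its counterpart: that is what turns agreement-at-a-time into agreement-as-paths, and hence into agreement of the coalescence structure, which is what the joint TV distance actually requires. If you replace the synchronous-then-maximal coupling by the coalescing coupling, the heat-kernel preliminaries become unnecessary and you recover the paper's one-paragraph proof.
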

\begin{proof}
	Let $\tilde{\phi}$ have the same distribution as $\phi$ but be coupled with $\phi$ such that, for each $i=1,2$, we have $\phi_{t,s}(x_i)$ and $\tilde{\phi}_{t,s}(x_i')$ evolve independently until they take the same value at which point they coalesce. This is possible, as having fixed $\phi$ we can construct $\tilde{\phi}$ by first constructing $\tilde{\phi}(x_1')$ independently until it hits $\phi(x_1)$, then constructing $\tilde{\phi}(x_2')$ independently until it hits $\tilde{\phi}(x_1')$ or $\phi(x_2)$.
	
	\begin{align}
	&d_{TV}((\phi_{s+\delta,s}(x_1),\phi_{s+\delta,s}(x_2)),(\phi_{s+\delta,s}(x'_1),\phi_{s+\delta,s}(x'_2))) \\
	\leq& 1-\PP\left(\phi_{s+\delta,s}(x_i)=\tilde{\phi}_{s+\delta,s}(x_i) \textrm{ for both } i=1,2\right) \\
	\leq& \PP\left(\phi_{s+\delta,s}(x_1)\neq\tilde{\phi}_{s+\delta,s}(x_1)\right)+\PP\left(\phi_{s+\delta,s}(x_2)\neq\tilde{\phi}_{s+\delta,s}(x_2)\right)\\
	\leq& 2(1-2\PP(a_* W_\delta <-\delta-\delta b^*))\\
	=& 2\left(1-2\Phi\left(-\sqrt{\delta}\frac{1+b^*}{a_*}\right)\right)\\
	\leq& 2\left(1-2\left(0.5-\sqrt{\frac{\delta}{2\pi}}\frac{1+b^*}{a_*}\right)\right)\\
	=& \sqrt{\frac{8\delta}{\pi}}\frac{1+b^*}{a_*} \rightarrow 0.
	\end{align}
\end{proof}

\begin{prop}
	\label{EMcontinuousinphi}
	\begin{equation}
		\EE_\phi\EE_x(M_{st}(x_1,x_2,b,a,\phi))=\lim_n \EE_{\phi^n}\EE_x\left( M_{st}(x_1,x_2,b,a,\phi^n) \right)
	\end{equation}
\end{prop}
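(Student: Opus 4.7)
The plan is to combine the weak convergence $\phi^n \Rightarrow \phi$ with uniform integrability bounds furnished by $A_N$, passing to a.s.\ convergence via Skorokhod's representation theorem and then to convergence of expectations by dominated convergence. First, I place the $\phi^n$ and $\phi$ on a common probability space on which $\phi^n \to \phi$ almost surely in $(C^\circ(\RR,\DDD), d_C)$; unpacking the definition of $d_C$ this gives uniform convergence of $\phi^n_{rs}(x)$ to $\phi_{rs}(x)$ on every compact subset of $[s,t] \times \RR$.

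Next, I split $M_{st}(x_1,x_2,b,a,\phi)$ into the product term $\phi_{ts}(x_1)\phi_{ts}(x_2)$, the drift-integral term, and the diffusivity-integral term $\int_{T\wedge t}^t a(r,\phi_{rs}(x_1))\,dr$ with $T = T^{(s,x_1)(s,x_2)}$. The first two are genuinely continuous as functionals of $\phi$ in the $d_C$ topology: uniform convergence of $\phi^n_{rs}$ on the compact set $[s,t] \times (I_1 \cup I_2)$, combined with the Lipschitz regularity of $b$ in space, yields pointwise-in-$(x_1, x_2)$ convergence of these contributions, and bounded convergence on the event $A_N$ then transfers this to convergence of the corresponding $\EE_x$ averages.

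The third piece is the obstacle since the coalescence time $T$ is not continuous in $\phi$. I use the coalescing-flow identity $\{r \geq T(x_1,x_2)\} = \{\phi_{rs}(x_1) = \phi_{rs}(x_2)\}$ together with Fubini to rewrite the $\EE_x$ average as
\begin{equation*}
\frac{1}{|I_1||I_2|}\int_s^t \int_{I_1} a(r,\phi_{rs}(x_1))\, L_r(x_1,\phi) \, dx_1\,dr, \qquad L_r(x_1,\phi) := |\{x_2 \in I_2 : \phi_{rs}(x_2) = \phi_{rs}(x_1)\}|,
\end{equation*}
so that $L_r(x_1,\phi)$ is the length of the basin of the monotone map $\phi_{rs}$ containing $x_1$, intersected with $I_2$. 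The key continuity claim I need is that for $\mu_A$-a.e.\ $\phi$ and Lebesgue-a.e.\ $(r,x_1) \in [s,t] \times I_1$ one has $L_r(x_1,\phi^n) \to L_r(x_1,\phi)$. This is the main technical obstacle: generic uniform convergence of monotone functions need not preserve plateau lengths, but both $\phi$ and $\phi^n$ are coalescing flows, and the basin structure is sufficiently stable because non-coalescence is an open condition under uniform convergence and, at interior points of the basin of $\phi_{rs}$, the corresponding basin of $\phi^n_{rs}$ approximates it once the flows are close enough. Dominated convergence in $x_1$ and $r$ then gives almost-sure convergence of the averaged third piece.

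Finally, to promote almost-sure convergence of $\EE_x M_{st}(\cdot,\cdot,b,a,\phi^n)$ to convergence of its expectation, I use that on $A_N$ the paths are deterministically bounded, whence $\EE_x|M|$ is bounded by a constant $C(N,s,t,a^*,b^*)$ depending only on $N$ and the compact time interval, while Proposition \ref{ANhighprob} gives $\PP(\phi^n \notin A_N) \to 0$ uniformly in $n$ as $N \to \infty$. A standard truncation argument splitting the expectation on and off $A_N$, letting $n \to \infty$ first and then $N \to \infty$, completes the proof. The hardest part is the basin-length convergence in the third term: it is precisely the averaging over $x_2 \in I_2$ that converts the pointwise-in-$\phi$ discontinuity of the coalescence time $T$ into a Lebesgue-measurable quantity stable under uniform approximation by coalescing flows.
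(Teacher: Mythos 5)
Your route differs genuinely from the paper's: you pass to a Skorokhod coupling and then reformulate the problematic coalescence term via the basin-length functional $L_r(x_1,\phi)$, whereas the paper never leaves weak convergence, instead replacing the hitting time $T^{(s,x_1)(s,x_2)}$ by the ``near-coalescence'' times $T_\eta$, averaging over $\eta\in[0,\epsilon]$ to form $\tilde M^\epsilon$, and controlling the $\epsilon\to 0$ error uniformly in $n$ via the strong Markov property at $T_\epsilon$ together with a hitting-time estimate. Your treatment of the first two terms of $M$ (averaging in $x$ to handle jumps of $\phi_{rs}$, then uniform integrability on $A_N$) is sound and essentially parallels the paper's argument for those pieces, so the only question is the third term.

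There is a genuine gap there, and it is exactly at the claim you flag as the main obstacle. Convergence $\phi^n\to\phi$ in $d_C$ gives you only graph-Hausdorff closeness of the monotone maps $\phi^n_{rs}$ and $\phi_{rs}$ (more precisely $\phi_{rs}(x-\delta)-\delta\le\phi^n_{rs}(x)\le\phi_{rs}(x+\delta)+\delta$), which yields $\limsup_n L_r(x_1,\phi^n)\le L_r(x_1,\phi)$ but not the matching lower bound. The issue is that a single plateau of $\phi_{rs}$ can be shadowed, within $d_\DDD$-distance $\delta$, by a coalescing flow whose plateau structure is much finer (for instance, many disjoint plateaus each of length $o(1)$, all within $\delta$ of the target value); in that situation $L_r(x_1,\phi^n)\to 0$ while $L_r(x_1,\phi)>0$, so basin lengths need not converge under the coupling. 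The heuristic you offer, that non-coalescence is open and that interior points of a basin of $\phi_{rs}$ are approximated by the basin of $\phi^n_{rs}$, addresses only the upper bound; the lower bound requires showing that paths of $\phi^n$ which become $O(\delta)$-close must actually coalesce shortly thereafter, which is a quantitative probabilistic statement about $\mu^{a_n,b_n}_A$ (a hitting-time estimate uniform in $n$), not a consequence of the $d_C$-closeness furnished by Skorokhod. That probabilistic input is precisely what the paper's $T_\eta$ device supplies: working with $\{T_\eta\le r\}$ rather than $\{T_0\le r\}$ makes the indicator stable under $d_\DDD$-perturbations, and the error $\tilde M^\epsilon - M$ is then controlled \emph{uniformly in $n$} by the estimate $\PP(T_0-T_\epsilon>\epsilon)=O(\sqrt\epsilon)$, which uses the strong Markov property and the diffusivity lower bound $a_*$. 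To repair your argument you would need to prove, for a.e.\ $(r,x_1)$ and a.s.\ under the coupling, that whenever $\phi$-paths from $x_1,x_2$ have coalesced by time $r$, the $\phi^n$-paths from $x_1,x_2$ eventually coalesce by time $r$ as well; as stated this is not available from uniform convergence alone and effectively reintroduces the $\eta$-smoothing in another guise.
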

\begin{proof}
	We would like to be able to say that $\EE_x(M_{st})$ is a continuous function of $\phi$, and apply weak convergence. Unfortunately, even after averaging over x, this still isn't true, as $T^{(s,x_1)(s,x_2)}$ is not a continuous function of $\phi$, so we now proceed to smooth $M_{st}$ even more. Define
	\begin{equation}
	T_{\eta}=\inf\left\lbrace r\geq s :d\right(\phi_{rs}(x_1),\ZZ+\phi_{rs}(x_2)\left) <\eta \right\rbrace
	\end{equation}
	and then define
	\begin{align}
		\tilde{M}^\epsilon_{st}(x_1,x_2,b,a,\phi)=&\phi_{ts}(x_1)\phi_{ts}(x_2)-\int_{s}^t \big(\phi_{rs}(x_1)b(r,\phi_{rs}(x_2))\\&+\phi_{rs}(x_2)b(r,\phi_{rs}(x_1))\big)dr-\frac{1}{\epsilon}\int_0^\epsilon \int_{T_\eta \wedge t}^t a(r,\phi_{rs}(x_1)) dr d\eta.
	\end{align}
	By applying the triangle inequality the following three claims will now suffice to complete the proof. Firstly
	\begin{equation}
	\EE_{\phi^n}\EE_x \tilde{M}^\epsilon(\phi^n) \rightarrow \EE_{\phi}\EE_x \tilde{M}^\epsilon(\phi) \textrm{ as }n\rightarrow \infty
	\end{equation}
	secondly
	\begin{equation}
	\EE_{\phi}\EE_x \tilde{M}^\epsilon(\phi) \rightarrow \EE_{\phi}\EE_x M(\phi) \textrm{ as }\epsilon\rightarrow 0
	\end{equation}
	and thirdly
	\begin{equation}
	\EE_{\phi^n}\EE_x \tilde{M}^\epsilon(\phi^n) \rightarrow \EE_{\phi^n}\EE_x M(\phi^n) \textrm{ as }\epsilon \rightarrow 0 \textrm{ uniformly in } n.
	\end{equation}
	
	We first prove the second claim. Note that $T_\eta$ monotonically increases to $T_0$ as $\eta\rightarrow 0$ and thus $\tilde{M}^\epsilon$ is monotonically increasing to $M$ as $\epsilon\rightarrow 0$. Thus the second claim holds by the Monotone Convergence Theorem.
	
	We next prove the third claim. We have that
	\begin{align}
	\lvert \tilde{M}^\epsilon(\phi^n)-M(\phi^n) \rvert =&\frac{1}{\epsilon}\int_0^\epsilon \int_{T_\eta\wedge t}^{T_0 \wedge t} a(r,\phi^n_{rs}(x_1))dr d\eta \\
	\leq&a^*\left((T_0-T_\epsilon)\wedge (t-s)\right)
	\end{align}
	and thus
	\begin{align}
	\left\lvert\EE_{\phi^n}\left(\tilde{M}^\epsilon(\phi^n)-M(\phi^n)\right) \right\rvert\leq&a^*\left(\epsilon+(t-s)\PP(T_0-T_\epsilon>\epsilon)\right).
	\end{align}
	Using the strong Markov property at time $T_\epsilon$ we can see that
	\begin{align}
	\PP(T_0-T_\epsilon>\epsilon)\leq&1-2\PP(2a_*W_\epsilon<-\epsilon(1+2b^*)) \\
	=&1-2\Phi\left( -\frac{\sqrt{\epsilon}(1+2b^*)}{2a_*}\right)\\
	=&O(\sqrt{\epsilon}).
	\end{align}
	Putting this together and averaging over $x$, we get the third claim with the order of the expectations swapped. Note that each term of $\tilde{M}^\epsilon(\phi^n)$ and $M(\phi^n)$ have sub-exponential tails. Thus, we can apply Fubini's theorem to deduce the third claim.

	Finally, we will show that $\EE_x\tilde{M}^\epsilon(\phi)$ is a continuous function of $\phi$, from which our first claim immediately follows due to weak convergence. Then we will be done.
	
	Fix $\phi'\in C^\circ(\RR,\DDD)$. Let $\phi''$ be distance at most $\delta$ from $\phi'$. Fix $t>s$. We have, for $\delta<1$, that
	\begin{equation}
	\phi_{ts}'(x_1)-1-\delta\leq \phi_{ts}'(x_1-\delta)-\delta\leq \phi_{ts}''(x_1)\leq \phi_{ts}'(x_1+\delta)+\delta \leq \phi_{ts}'(x_1)+1+\delta.
	\end{equation}
	Let $[l_1,u_1]$ be the interval that $x_1$ is being averaged over, then
	\begin{align}
	\EE_{x_1}\left(\phi_{ts}''(x_1)\right)&\leq \frac{1}{u_1-l_1} \int_{l_1}^{u_1}\phi_{ts}'(x_1+\delta)+\delta dx_1 \\
	&\leq \frac{1}{u_1-l_1}\int_{l_1}^{u_1-\delta}\phi_{ts}'(x_1+\delta)dx_1+\frac{1}{u_1-l_1}\int_{u_1-\delta}^{u_1}\phi_{ts}'(x_1)+1dx_1 +\delta \\
	&\leq \EE_{x_1}\left(\phi_{ts}'(x_1)\right)-\frac{1}{u_1-l_1}\int_{l_1}^{l_1+\delta}\phi_{ts}'(x_1)dx_1\\
	&\hspace{77pt}+\frac{1}{u_1-l_1}\int_{u_1-\delta}^{u_1}\phi_{ts}'(x_1)dx_1+\delta\left(\frac{1}{u_1-l_1}+1\right) \\
	&\leq \EE_{x_1}\left(\phi_{ts}'(x_1)\right)+\delta\left(1+\frac{1+2\sup_{l_1\leq x_1\leq u_1}|\phi_{ts}'(x_1)|}{u_1-l_1}\right) \\
	&\rightarrow \EE_{x_1}\left(\phi_{ts}'(x_1)\right) \textrm{ as } \delta \rightarrow 0.
	\end{align}
	The lower bound is similar. We can deduce that $\EE_{x_1}(\phi_{ts}(x_1))$ is continuous in $\phi$, and so the first term of $\EE_x\tilde{M}^\epsilon$, i.e.
	\begin{equation}
          \EE_{x_1}(\phi_{ts}(x_1))\EE_{x_2}(\phi_{ts}(x_2))
	\end{equation}
	is also continuous in $\phi$. To show that the second term of $\EE_x\tilde{M}^\epsilon$ is continuous in $\phi$, as $\phi''_{rs}(x_1)$ is bounded uniformly over $r\in[s,t]$ and $\phi''$ for fixed $\delta$ and $\phi'$, it suffices to show that
	\begin{equation}
	\EE_{x_1}b(t,\phi_{ts}(x_1))
	\end{equation}
	is continuous in $\phi$.
	
	\begin{align}
	b\left(t,\phi_{ts}''(x_1)\right)&\leq \sup_{|\delta x |\leq \delta}b\left(t,\phi_{ts}'(x_1+\delta x)\right)+L\delta \\
	&\leq b\left(t,\phi_{ts}'(x_1)\right)+L\left(\delta+\sup_{|\delta x |\leq \delta} \left|\phi_{ts}'(x_1+\delta x)-\phi_{ts}'(x_1)\right|\right).
	\end{align}
	Cut the interval $[l_1-\delta,u_1+\delta]$ into pieces of length $\delta$. Let $C$ be the amount that $\phi_{ts}$ increases by over that interval. Call a piece bad if $\phi_{ts}$ increases by more than $\frac{\sqrt{\delta}}{2}$ on that piece or either of the neighbouring pieces. As $\phi_{ts}$ is non-decreasing there can be at most $\frac{6C}{\sqrt{\delta}}$ bad pieces. If $x_1$ is not in a bad piece then
	\begin{equation}
	\sup_{|\delta x |\leq \delta} \left|\phi_{ts}'(x_1+\delta x)-\phi_{ts}'(x_1)\right|\leq \sqrt{\delta}.
	\end{equation}
	So our bound on $b\left(t,\phi_{ts}''(x_1)\right)$ gives
	\begin{equation}
	b\left(t,\phi_{ts}''(x_1)\right)\leq b\left(t,\phi_{ts}'(x_1)\right)+ L\left(\delta+\sqrt{\delta}+C\mathbbm{1}_{\lbrace x_1\in \textrm{a bad piece}\rbrace}\right).
	\end{equation}
	Combining this with the corresponding lower bound whose derivation is similar we find
	\begin{align}
	&\left| \EE_{x_1}\left(b\left(t,\phi_{ts}'' (x_1)\right)\right)-\EE_{x_1}\left(b\left(t,\phi_{ts}' (x_1)\right)\right) \right|\\
	\leq& L\left(\delta+\sqrt{\delta}+C\PP_{x_1}(x_1 \in\textrm{a bad piece})\right) \\
	\leq& L\left(\delta+\sqrt{\delta}+\frac{6C^2\sqrt{\delta}}{u_1-l_1}\right)=O\left(\sqrt{\delta}\right).
	\end{align}
	Thus, $\EE_{x_1}b(t,\phi_{ts}(x_1))$  and the second term of $\EE_x \tilde{M}^\epsilon$ are continuous in $\phi$.
	
	Similarly, we can conclude that $\EE_{x_1}a(t,\phi_{ts}(x_1))$ is continuous with respect to $\phi$, and further, as the products of intervals generate the Borel $\sigma$-algebra on $\RR^2$, that
	\begin{equation}
	\label{generalSmooth}
	\int a(t,\phi_{ts}(x_1)) d\mu(x)
	\end{equation}
	is a continuous function of $\phi$ for each measure $\mu$ that is bounded, compactly supported and absolutely continuous with respect to Lebesgue measure on $\RR^2$. Note that the integral here is over $x_1$ and $x_2$, the integrand is independent of the latter but we phrase the above fact in this form as that is how it will be used later. This will be useful after we rewrite the third term of $\EE_x \tilde{M}^\epsilon$ as
	\begin{align}
	&\EE_x \frac{1}{\epsilon}\int_0^\epsilon \int_{T_\eta \wedge t}^t a(r,\phi_{rs}(x_1)) dr d\eta \\
	=&\EE_x\int_0^\epsilon \int_s^t a(r,\phi_{rs}(x_1))\frac{\mathbbm{1}_{\lbrace r>T_\eta\rbrace}}{\epsilon} dr d\eta \\
	=&\int_s^t\EE_x\left(a(r,\phi_{rs}(x_1))\int_0^\epsilon \frac{\mathbbm{1}_{\lbrace r>T_\eta\rbrace}}{\epsilon} d\eta \right) dr.
	\end{align}
	To show this is continuous it suffices to show that
	\begin{equation}
	\EE_x\left(a(t,\phi_{ts}(x_1))\int_0^\epsilon \frac{\mathbbm{1}_{\lbrace t>T_\eta\rbrace}}{\epsilon} d\eta \right)
	\end{equation}
	is continuous and uniformly bounded for all $t>s$. The boundedness is immediate. The continuity is not immediate from \eqref{generalSmooth} being continuous, because $T_\eta$ depends on $\phi$. However, it can be shown as follows. Let $T_\eta^0$ be the $T_\eta$ corresponding to $\phi'$ and define $T_\eta^\delta$ similarly. Let $\mu'$ be the measure on $\RR^2$ with Radon-Nikodym derivative
	\begin{equation}
	\int_0^\epsilon \frac{\mathbbm{1}_{\lbrace t>T_\eta\rbrace}}{\epsilon} d\eta
	\end{equation}
	with respect to the uniform probability measure on $I_1\times I_2$ and define $\mu''$ similarly. Then
	\begin{align}
	&\left|\EE_x\left(a\left(t,\phi_{ts}''(x_1)\right)\int_0^\epsilon \frac{\mathbbm{1}_{\lbrace t>T^\delta_\eta\rbrace}}{\epsilon} d\eta \right)-\EE_x\left(a\left(t,\phi_{ts}'(x_1)\right)\int_0^\epsilon \frac{\mathbbm{1}_{\lbrace t>T^0_\eta\rbrace}}{\epsilon} d\eta \right)\right| \\
	=&\left| \int a\left(t,\phi_{ts}''(x_1)\right) d\mu''(x_1)-\int a\left(t,\phi_{ts}'(x_1)\right) d\mu'(x_1)\right| \\
	\leq& \left| \int a\left(t,\phi_{ts}''(x_1)\right) d\mu''(x_1)-\int a\left(t,\phi_{ts}''(x_1)\right) d\mu'(x_1)\right| \\
	&+\left| \int a\left(t,\phi_{ts}''(x_1)\right) d\mu'(x_1)-\int a\left(t,\phi_{ts}'(x_1)\right) d\mu'(x_1)\right|.
	\end{align}
	The second of these terms is small due to the continuity of \eqref{generalSmooth}, the first term is bounded by
	\begin{equation}
	\frac{a^*}{\epsilon}\int_{l_1}^{u_1}\int_{l_2}^{u_2}\int_0^\epsilon \left|\mathbbm{1}_{\lbrace t>T_\eta^\delta \rbrace} -\mathbbm{1}_{\lbrace t>T_\eta^0 \rbrace}\right| d\eta dx_2 dx_1.
	\end{equation}
	The contribution to this integral when $|x_1-x_2|<2\delta$ is clearly small. We will show that the contribution when $x_1\geq x_2+2\delta$ is small and, as the case for $x_1\leq x_2-2\delta$ is similar, we will then be done. Conditional on $x_1\geq x_2+2\delta$ we have
	\begin{equation}
	T^0_{\eta+2\delta}(x_1-\delta,x_2+\delta)\leq T^\delta_\eta(x_1,x_2)\leq T^0_{\eta-2\delta}(x_1+\delta,x_2-\delta)
	\end{equation}
	and thus our integrand is zero unless
	\begin{equation}
        \label{tnonzero}
	t\in \left[ T^0_{\eta+2\delta}(x_1-\delta,x_2+\delta),T^0_{\eta-2\delta}(x_1+\delta,x_2-\delta)\right].
	\end{equation}
	To see that the integral is small we will change the variables of the integral. We will use an orthonormal substitution to change the variables to, $\alpha=(2\eta-x_1+x_2)/\sqrt{6}$, $\beta=(-\eta-x_1+x_2)/\sqrt{3}$, and $\gamma=(x_1+x_2)/\sqrt{2}$. For some values of the endpoints, the result is as follows
        \begin{equation}
          \frac{a^*}{\epsilon}\int_{\gamma_1}^{\gamma_2}\int_{\beta_1}^{\beta_2}\int_{\alpha_1}^{\alpha_2} \left|\mathbbm{1}_{\lbrace t>T_\eta^\delta \rbrace} -\mathbbm{1}_{\lbrace t>T_\eta^0 \rbrace}\right| d\alpha d\beta d\gamma.
	\end{equation}
        Note that the end points of the integral are independent of $\delta$. Thus the whole expression has at most the same order as the inner integral for small $\delta$. For any fixed value of $\beta$ and $\gamma$, the interval of $\alpha$ for which Equation \eqref{tnonzero} can hold has length at most $2\sqrt{6}\delta$ (as for values of $\alpha$ differeing by more than that, the corresponding intervals on the right hand side are disjoint). As the integrand is bounded by one, the inner integral is bounded by $2\sqrt{6}\delta$. Therefore, the total expression is $O(\delta)$.
\end{proof}

\section*{Acknowledgements}

I would like to thank my PhD supervisor James Norris for providing the idea for this research and to thank my anonymous reviewers and my PhD examinors Jason Miller and Amanda Turner for providing extensive help getting this manuscript into a readable condition.

\bibliographystyle{plain}
\bibliography{SecondRevision}

\end{document}